\documentclass{amsproc}

\usepackage{amssymb}

\usepackage[cmtip,all]{xy}

\usepackage{amsrefs}
\usepackage{mathtools}
\usepackage{mathrsfs}
\usepackage{tikz-cd}
\usepackage[hidelinks]{hyperref}

\newcommand{\Z}{\mathbb{Z}}
\newcommand{\Q}{\mathbb{Q}}

\newcommand{\F}{\mathbb{F}}

\newcommand{\m}{\mathfrak{m}}

\newcommand{\sO}{\mathcal{O}}
\newcommand{\wt}{\widetilde}

\newcommand{\sht}{\mathrm{ht}}
\newcommand{\fI}[2]{I_{#1}\!\left(#2\right)}

\DeclareMathOperator{\Spec}{Spec}
\DeclareMathOperator{\Proj}{Proj}

\DeclareMathOperator{\Hom}{Hom}

\DeclareMathOperator{\Exc}{Exc}
\DeclareMathOperator{\Ker}{Ker}
\DeclareMathOperator{\lct}{lct}
\DeclareMathOperator{\fpt}{fpt}
\DeclareMathOperator{\qfpt}{qfpt}
\DeclareMathOperator{\Image}{Im}

\newtheorem{theorem}{Theorem}[section]
\newtheorem{lemma}[theorem]{Lemma}
\newtheorem{proposition}[theorem]{Proposition}
\newtheorem{corollary}[theorem]{Corollary}
\newtheorem{claim}[theorem]{Claim}
\newtheorem*{claim*}{Claim}
 \newtheorem*{theoremA}{Theorem A}
 \newtheorem*{theoremB}{Theorem B}
 \newtheorem*{theoremC}{Theorem C}

\theoremstyle{definition}
\newtheorem{definition}[theorem]{Definition}
\newtheorem{example}[theorem]{Example}

\newtheorem*{definition_Intro}{Definition}

\theoremstyle{remark}
\newtheorem{remark}[theorem]{Remark}
\newtheorem{setting}[theorem]{Setting}

\numberwithin{equation}{section}

\begin{document}

% \title[short text for running head]{full title}
\title{Quasi-$F$-singularities and singularities in birational geometry}

%    Only \author and \address are required; other information is
%    optional.  Remove any unused author tags.

\author{Tatsuro Kawakami}
\address{Graduate School of Mathematical Sciences, University of Tokyo, 3-8-1 Komaba, Meguro-ku, Tokyo 153-8914, Japan}
\email{kawakami@ms.u-tokyo.ac.jp}

\author{Shunsuke Takagi}
\address{Graduate School of Mathematical Sciences, University of Tokyo, 3-8-1 Komaba, Meguro-ku, Tokyo 153-8914, Japan}
\email{stakagi@ms.u-tokyo.ac.jp}

\author{Shou Yoshikawa}
\address{Institute of Science, Tokyo 152-8551, Japan}
\email{yoshikawa.s.9fe9@m.isct.ac.jp}

%    The 2020 edition of the Mathematics Subject Classification is
%    the current definitive version.
\subjclass[2020]{14B05, 13A35, 14J17}

\date{}

\begin{abstract}
We give an overview of the theory of quasi-$F$-singularities, focusing on their connection with singularities in birational geometry.
\end{abstract}

\maketitle

%    Text of article.
\section{Introduction}
$F$-singularities are singularities in positive characteristic defined in terms of the Frobenius map and include four major classes: $F$-regular, $F$-pure, $F$-rational and $F$-injective singularities. 
For the purpose of this introduction, let $(A,\m)$ be the local ring at a closed point of a $d$-dimensional normal variety over a perfect field of characteristic $p>0$. 
Then the most basic class of $F$-singularities, $F$-purity (also known as local $F$-splitting), is defined by the splitting of the Frobenius map $F \colon A \to F_*A$. 
When $A$ is Gorenstein, this is equivalent by Matlis duality to the injectivity of the induced map on local cohomology $H^{d}_{\m}(A) \to H^{d}_{\m}(F_*A)$. 
Although $F$-singularities have their origins in commutative algebra, they have turned out to be closely connected to singularities in birational geometry. 
For instance, by \cite{Smith97}, \cite{hw02} and \cite{Schwede09_2}, $F$-regular (resp.~$F$-pure, $F$-rational, Cohen--Macaulay $F$-injective) singularities are klt (resp.~lc, pseudo-rational, pseudo-Du~Bois):
\vspace*{2em}
\[
\xymatrix{
\textup{$F$-regular} \ar@{=>}[r] \ar@{=>}[d] \ar@{=>}@/^1.5pc/[rrr] & \textup{$F$-pure} \ar@{=>}[d] \ar@{=>}@/^1.5pc/[rrr] & & \textup{klt} \ar@{=>}[r] \ar@{=>}[d] & \textup{lc} \ar@{=>}[d]  \\
\textup{$F$-rational} \ar@{=>}[r] \ar@{=>}@/^1.5pc/[rrr] &  \textup{$F$-injective} \ar@{=>}@/^1.5pc/[rrr]^{\hspace*{6em}+\textup{CM}} & & \textup{pseudo-rational} \ar@{=>}[r] &  \textup{pseudo-Du~Bois}\\
}
\]
(see Definition~\ref{def:p-DB} for the definitions of pseudo-rational and pseudo-Du~Bois singularities). 
These connections have made $F$-singularities increasingly important in birational geometry. 
Quasi-$F$-singularities are a generalization of $F$-singularities, and in this paper we survey their relations to singularities in birational geometry. 

The study of quasi-$F$-singularities was initiated by Yobuko \cite{yobuko19}.  
He introduced the notion of quasi-$F$-splitting, a generalization of $F$-splitting, to study the liftability to characteristic zero of Calabi--Yau varieties of finite Artin--Mazur height. 
This notion has since proved useful in the study of global properties of varieties in positive characteristic. 
For instance, although Kodaira-type vanishing theorems generally fail in positive characteristic, Nakkajima--Yobuko \cite{NY21} proved the Kodaira vanishing theorem for proper smooth (globally) quasi-$F$-split varieties. This was recently generalized by Petrov to the Kodaira--Akizuki--Nakano vanishing theorem \cite{Pet25}.

The definition of quasi-$F$-splitting involves the ring $W_n A$ of Witt vectors of length $n$, 
a local ring of characteristic $p^n$ whose reduced quotient $W_nA/\sqrt{0}$ is naturally isomorphic to $A$. 
The ring $W_nA$ is endowed with the Frobenius and restriction maps $F\colon W_nA \to F_*W_nA$ and $R^{n-1}\colon W_nA \to A$ (see \S\ref{Witt ring} for basic properties of the ring $W_nA$). 
We recall the definition of local quasi-$F$-splitting. 
\begin{definition_Intro}[\cite{yobuko19}]
We say that the local ring $A$ is \emph{quasi-$F$-split} if there exist an integer $n \ge 1$ and a $W_nA$-module homomorphism $\varphi \colon F_*W_nA \to A$ such that $\varphi \circ F=R^{n-1}$. 
This is equivalent to saying that the map $\Phi_{A,n} \colon A \to Q_{A,n}$ splits as a $W_nA$-linear map, where $Q_{A,n}$ is the pushout of $F \colon W_nA \to F_*W_nA$ and $R^{n-1} \colon W_nA \to A$. 
\[
\xymatrix{
W_n A \ar[r]^F \ar[d]_{R^{n-1}} & F_*W_n A \ar@{.>}[dl]_{\varphi} \ar[d] \\
A \ar[r]^{\Phi_{A,n}} & Q_{A,n}
}
\]
When $A$ is Gorenstein, by Matlis duality this is further equivalent to the injectivity of the induced map on local cohomology $H^{d}_{\m}(A) \to H^{d}_{\m}(Q_{A,n})$.
\end{definition_Intro}

When $n=1$, the above condition is exactly $F$-purity. 
The systematic study of quasi-$F$-splitting from the viewpoints of commutative algebra and birational geometry was first undertaken in \cite{KTY22} and \cite{KTTWYY1}. 
Further developments include \cite{TWY}, \cite{KTTWYY2}, \cite{KTTWYY3} and \cite{Kaw7}. 
In particular, these papers introduced and studied other classes of quasi-$F$-singularities, namely quasi-$F^{\infty}$-split, quasi-$F$-regular, quasi-$F$-rational and quasi-$F$-injective singularities.
Quasi-$F^{\infty}$-splitting is an iterated version of quasi-$F$-splitting, and quasi-$F$-regularity, quasi-$F$-rationality and quasi-$F$-injectivity are generalizations of $F$-regularity, $F$-rationality and $F$-injectivity, respectively. 
Roughly speaking, they are defined by replacing, in the theory of $F$-singularities, the $A$-linear map 
\[H^{d}_{\m}(A) \to H^{d}_{\m}(F^e_*A),\] 
induced by the $e$-times iterated Frobenius map $F^e \colon A \to F^e_*A$, with the $W_nA$-linear map 
\[H^{d}_{\m}(A) \to H^{d}_{\m_n}(Q^e_{A,n}).\] 
Here $Q^e_{A,n}$ is the pushout of $F^e \colon W_nA \to F^e_*W_nA$ and $R^{n-1} \colon W_nA \to A$, and $\m_n$ is the maximal ideal of $W_nA$. 
The reader is referred to Definition~\ref{def:quasi-F-invariants} for the precise definitions. 

The relationships between quasi-$F$-regular and klt singularities and between quasi-$F$-split and lc singularities are studied in \cite{KTTWYY3} and \cite{STY}, respectively.
\begin{theoremA}
Suppose that $A$ is $\Q$-Gorenstein. 
\begin{enumerate}
    \item $($\cite{KTTWYY3}*{Theorems~A~and~C}$)$ If $A$ is quasi-$F$-regular, then $\Spec A$ is klt. The converse holds in dimension two. 
    \item $($\cite{STY}*{Theorems~A~and~6.18}$)$ If $A$ is quasi-$F^{\infty}$-split, then $\Spec A$ is lc. The converse holds in dimension two when the characteristic is $p>3$. 
\end{enumerate}
\end{theoremA}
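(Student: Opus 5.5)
The plan is to treat the two implications and the two converses separately. Throughout I use the local cohomology characterization: quasi-$F$-regularity (resp.\ quasi-$F^{\infty}$-splitting) of $A$ means injectivity of suitable maps $H^d_{\m}(\omega_A(\text{twist})) \to H^d_{\m_n}(Q^e_{A,n}(\text{twist}))$ for some $n$ and all $e$, with perturbations by a test element in the regular case.

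\textbf{Forward implications.} For (1), I argue by contradiction. Suppose $\Spec A$ is not klt. Take a log resolution $\pi\colon Y\to X=\Spec A$ and a prime divisor $E$ with discrepancy $a(E,X)\le -1$. Write $K_Y=\pi^*K_X+\sum a_iE_i$ and reduce to the index-one cover, or work with $\Q$-divisors $rK_X$. The classical argument for $F$-regular $\Rightarrow$ klt shows that the composite
\[
H^d_{\m}(\omega_X)\to H^d_{\m}(F^e_*\omega_X(\lceil (p^e-1)\Delta\rceil))
\]
kills the socle-type element coming from $E$. Here $E$ contributes to $R\pi_*$ and to the non-klt ideal. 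I would show the analogous statement for $Q^e_{A,n}$. Consider the exact sequence $0\to F_*B_{n-1}\to Q_{A,n}\to F_*A\to 0$, with $B$ the Witt-vector boundary-type sheaves, and its counterpart on $Y$. The Frobenius-trace lifting to $W_n$ is controlled by the divisor $pE$-type twists. So the element of $H^d_{\m}(\omega_A)$ corresponding to $E$ maps to zero in $H^d_{\m_n}(Q^e_{A,n})$ for every $n$, once $e$ is large. The point is that the twist on $E$ grows like $p^e|a(E)|$, which dominates the bounded contributions from the $W_n$-structure.

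For (2), the argument is the same with discrepancy $<-1$, using the iterated splittings over all $e$ to obtain the contradiction. Equivalently, I would show that $\qfpt$-type thresholds are bounded above by $\lct$, and that quasi-$F$-regularity and quasi-$F^\infty$-splitting are preserved under finite covers étale in codimension one. This lets me reduce to the Gorenstein, index-one situation.

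\textbf{Converses in dimension two.} Two-dimensional klt singularities are quotient singularities, with known dual graphs: $A_n$, $D_n$, $E_6$, $E_7$, $E_8$ and the nonGorenstein analogues. For lc surface singularities one has in addition the simple elliptic, cusp and finite quotients of these. I plan to verify quasi-$F$-regularity case by case on the minimal resolution via a Fedder-type criterion for quasi-$F$-splitting on the resolution. Concretely, I need $H^1(Y,\, B_{n}(\text{twist}))$-type vanishing, or injectivity of $H^1(E,\ldots)\to H^1(E,Q_{n})$ along the exceptional curves. These reduce to rational curve chains, where cohomology is computable.

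The main obstacle is the finitely many bad characteristics (for example $p=2,3,5$ for $E_8$), where $F$-regularity fails. There I would need an explicit $n$ with a splitting, computed via the Fedder-type criterion $f^{p-1}\Delta_1(f^{p-1})\notin \m^{[p^2]}$. For (2), the cases with $p\le 3$ and the elliptic or cusp cases (using finite Artin--Mazur height of the elliptic curve) are excluded or handled this way. This explains the restriction $p>3$.
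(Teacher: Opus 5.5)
\paragraph{Overall.} The paper does not reprove Theorem~A. It quotes the two source papers, and the mechanism behind the two-dimensional converses is the one reproduced in the proofs of Theorem~\ref{rational-to-qFrat} and Lemma~\ref{qFinj-resolution}. Measured against that, your proposal has genuine gaps in both directions, and a wrong explanation of the hypothesis $p>3$.

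\paragraph{The converses.} A case-by-case verification cannot work, for several reasons.
\begin{enumerate}
\item In characteristic $p$ the dual graph does not determine the singularity: there are several non-isomorphic $E_8$ singularities for $p\le 5$. Also, $x^2+y^3+z^5$ is not $F$-regular for $p\le 5$ (Example~\ref{example:Fsing}(1)), so klt surface singularities need not be tame quotients.
\item The residue field need not be algebraically closed, and the graphs come in infinite families.
\item The Fedder-type criterion (Theorem~\ref{fedder-qFs}) applies only to hypersurfaces, whereas every klt or lc singularity of index $>1$ is non-Gorenstein.
\item A height computation only tests $c=1$, while quasi-$F$-regularity needs $\Phi^e_{A,K_A,n}(c)$ injective for every $c\in A^{\circ}$.
\item The test you quote is the $n=2$ case of Theorem~\ref{CY-fedder}, stated for Calabi--Yau-type homogeneous $f$. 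It cannot certify $x^2+y^3+z^5$ in characteristic $2$, whose height is $4$ (Example~\ref{example:fedder-qFs}(1)).
\end{enumerate}
Your aside about $H^1$-vanishing of $B_n$-type sheaves on $Y$ points the right way, but that vanishing is not obtained by computing on curve chains. The missing idea is uniform:
\begin{enumerate}
\item On a log resolution, twist by a $\pi$-anti-ample $\pi$-exceptional $\Q$-divisor $B_Y$.
\item Reduce injectivity to $H^1_\m(B^e_{Y,B_Y,n})=0$.
\item This vanishing follows from torsion-freeness of the log differential sheaves together with relative Serre vanishing of $H^1(Y,\Omega^1_Y(\log E)^*\otimes\sO_Y(K_Y-p^{n+c}B_Y))$ for $n\gg0$.
\end{enumerate}
The growing Witt length supplies the positivity, which is why no characteristic is exceptional in the klt case. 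In the lc case the reduced exceptional curves are handled uniformly through $B_{E_i,n}\cong B_n\Omega_{E_i}$ and the Cartier operator (compare Lemma~\ref{qFinj-resolution}(2)), not through Artin--Mazur heights.

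\paragraph{The restriction $p>3$.} This is not a computational artifact. By Theorem~\ref{klt-to-qFreg}(3), a non-klt lc surface singularity is quasi-$F$-split exactly when $p$ does not divide its Gorenstein index, and that index lies in $\{1,2,3,4,6\}$. So the converse genuinely fails for $p=2,3$, and a correct proof must use the index.

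\paragraph{The forward implications.} The crux is a Witt-vector version of the Hara--Watanabe trace computation along a divisor $E$ with bad discrepancy. You only announce it (``I would show the analogous statement for $Q^e_{A,n}$''), and the heuristic you offer does not hold:
\begin{enumerate}
\item $n$ is not fixed; it may depend on $e$ and $c$.
\item The $i$-th Witt component of $F^e_*W_n\sO_X(p^e(K_X+\Delta))$ carries the twist $p^{e+i}(K_X+\Delta)$.
\item The components are coupled by Witt addition and by the pushout along $R^{n-1}$.
\end{enumerate}
So there is no bounded error term for $p^e|a_E|$ to dominate. Moreover, for lc the relevant excess is $p^e(-a_E-1)$, which is why arbitrarily large $e$ is needed in (2). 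Showing that the relevant map kills the class coming from $E$ uniformly in $n$ is exactly the content of the results behind Theorem~\ref{qFreg-to-klt}(1),(2).

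Your reduction to the index-one cover also breaks down when $p$ divides the index. That cover is then not \'etale in codimension one, so neither ascent of quasi-$F^{\infty}$-splitting nor comparison of discrepancies is available. Theorem~\ref{klt-to-qFreg}(3) shows this case is not harmless.

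A minor point: log resolutions are not available in general dimension in characteristic $p$. You only need some projective birational model on which $E$ appears.
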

Since quasi-$F^{\infty}$-splitting implies Yobuko's original notion of quasi-$F$-splitting, and the converse holds in the Gorenstein case by \cite{KTTWYY3}*{Theorem~H}, Theorem~A shows that Gorenstein quasi-$F$-split singularities are lc.

On the other hand, \cite{KTTWYY3}*{Corollary~3.15} shows that quasi-$F$-rational singularities are pseudo-rational, but it was not known whether the converse holds in dimension two. 
Furthermore, quasi-$F$-injective singularities have not been studied extensively in the literature.
In this article, we establish some basic properties of quasi-$F$-injective singularities. 
\begin{theoremB}
The following two assertions hold. 
\begin{enumerate}
\item \textup{(Proposition~\ref{cor:qFinj-to-qF^einj})} Suppose that $A$ is Cohen--Macaulay. 
Then $A$ is quasi-$F$-injective if and only if the Frobenius-induced map 
\[
\varprojlim_{n} H^d_{\m_n}(W_nA) \to \varprojlim_{n} F_* H^d_{\m_n}(W_nA)
\] 
is injective. 
\item \textup{(Theorem~\ref{inv-adj})} Quasi-$F$-rationality deforms when the ambient space is quasi-$F$-injective. 
That is, for a nonzero element $f \in \m$, if $A$ is quasi-$F$-injective and $A/(f)$ is quasi-$F$-rational, then $A$ is quasi-$F$-rational. 
\end{enumerate}
\end{theoremB}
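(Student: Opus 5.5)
We would prove the two parts separately, using the standard local cohomology long exact sequences together with the description of $Q^e_{A,n}$ as a pushout. The preliminary observation is that the kernel of $R^{n-1}\colon W_nA\to A$ is $VW_{n-1}A$ and that $FV=p$. Hence
\[
Q^e_{A,n}\cong F^e_*\bigl(W_nA/F^{e-1}(p)\,\text{-image}\bigr),
\]
and for $e=1$ simply $Q_{A,n}\cong F_*(W_nA/pW_nA)$. In particular, a class $\eta\in H^d_{\m}(A)$ maps to zero in $H^d_{\m_n}(Q^e_{A,n})$ exactly when some (equivalently, every) lift $\tilde\eta\in H^d_{\m_n}(W_nA)$ with $R^{n-1}\tilde\eta=\eta$ satisfies $F^e\tilde\eta\in\Image\bigl(H^d_{\m_n}(F^e_*VW_{n-1}A)\bigr)$.

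\emph{Part (1).} Assume $A$ is Cohen--Macaulay. Since $W_nA$ is built from $A$ by the extensions $0\to F_*W_{n-1}A\xrightarrow{V}W_nA\to A\to 0$, it follows inductively that $H^i_{\m_n}(W_nA)=0$ for $i<d$. It also follows that the restriction maps $H^d_{\m_{n+1}}(W_{n+1}A)\to H^d_{\m_n}(W_nA)$ are surjective, since the next term is an $H^{d+1}$, which vanishes. So every class of $H^d_\m(A)$ lifts to every level, and the inverse system $\{H^d_{\m_n}(W_nA)\}_n$ has surjective transition maps.

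\emph{($\Leftarrow$)} Suppose $A$ is not quasi-$F$-injective, i.e. for each $n$ the kernel $K_n\subseteq H^d_\m(A)$ of $H^d_\m(A)\to H^d_{\m_n}(Q_{A,n})$ is nonzero. These $K_n$ form a decreasing chain of nonzero submodules of $\mathrm{Soc}$-controlled Artinian modules; intersecting with the socle, which is a finite-length module, the chain stabilizes at a nonzero limit. We then use the surjectivity above and a compactness (Mittag-Leffler) argument on the Artinian sets of lifts $\{\tilde\eta : F\tilde\eta\in\Image V\}$ to build a compatible nonzero sequence in $\varprojlim_n H^d_{\m_n}(W_nA)$. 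We then correct it by $V$-shifts so that its image under $\varprojlim F$ is zero.

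\emph{($\Rightarrow$)} Conversely, take a nonzero $y=(y_n)$ with $F(y)=0$, and let $m$ be minimal with $y_m\neq 0$. Since $\ker R=\operatorname{Im}V$, we can write $y$ as a $V^{m-1}$-shift of a compatible system whose bottom component $\eta\in H^d_\m(A)$ is nonzero. Then $\eta$ lies in every $K_n$.

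The delicate point in Part (1) is the compactness step in ($\Leftarrow$), namely choosing the lifts compatibly. We would handle it by working with the Artinian $W_{n}A$-modules of "bad" lifts and invoking the descending chain condition.

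\emph{Part (2).} We mimic Fedder-type and Hochster--Huneke deformation arguments. From $0\to A\xrightarrow{f}A\to A/(f)\to 0$, quasi-$F$-injectivity of $A$ (via Part (1)-type arguments) gives depth, so $A$ is Cohen--Macaulay. Thus $H^{d-1}_\m(A/f)\hookrightarrow H^d_\m(A)$ is the annihilator of $f$. Suppose $A$ is not quasi-$F$-rational. Then there is a nonzero $\eta\in H^d_\m(A)$ killed in $H^d_{\m_n}(Q^e_{A,n})$ after multiplication by a test element, for all $n,e$. Replacing $\eta$ by $f^k\eta$, we may assume $\eta\neq0$ and $f\eta=0$ (the socle lies in $0:f$), so $\eta=\delta(\xi)$ with $\xi\in H^{d-1}_\m(A/f)$ nonzero.

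The key step is a compatibility diagram. On Witt vectors we replace multiplication by $f$ with multiplication by the Teichmüller lift $[f]$; the map $W_nA\xrightarrow{[f]}W_nA$ has cokernel mapping onto $W_n(A/f)$. Using $F^e([f])=[f]^{p^e}$, we obtain the identity $F^e\circ [f] = [f]^{p^e-1}\cdot [f]\circ F^e$. This yields a commutative ladder relating $H^{d-1}(A/f)\to H^{d-1}(Q^e_{A/f,n})$ with $[f]^{p^e-1}\cdot\bigl(H^d(A)\to H^d(Q^e_{A,n})\bigr)$, where quasi-$F$-injectivity of $A$ is what makes the connecting maps on the $Q$-side injective. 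It would then follow that $\xi$ is killed in $H^{d-1}(Q^e_{A/f,n})$ for all $n,e$ up to the image of the test element, contradicting quasi-$F$-rationality of $A/(f)$. The main obstacle is exactly this ladder: $W_nA/[f]W_nA\neq W_n(A/f)$. We would first try to show that the kernel of $W_nA/[f]\to W_n(A/f)$ has vanishing relevant local cohomology, or is absorbed by the $V$-filtration inductively in $n$.
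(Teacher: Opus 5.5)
Part (1) is essentially correct. For $(\Rightarrow)$ you take a nonzero element of $\Ker F$ and $V$-shift it down until its bottom component is nonzero. This is a valid alternative to the paper's induction, which shows that $K^1_m\to K^1_1$ being zero forces $K^1_{m+n}\to K^1_{n+1}$ to be zero; that finite-level statement is reused later in the paper. You should say explicitly that $FV^{m-1}=V^{m-1}F$ and that $V^{m-1}$ is injective on $H^d_\m$ because $A$ is Cohen--Macaulay. That is what guarantees the shifted system is still killed by $F$.

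Your $(\Leftarrow)$ direction is the paper's Mittag--Leffler argument. Run it directly on the Artinian kernels $K^1_n=\Ker(F)\subseteq H^d_\m(W_nA)$ and their stable images; then no later ``correction by $V$-shifts'' is needed, and such a correction would itself have to be made compatibly in $n$. Your kernel criterion is also misstated. The condition $\Phi^e_{A,n}(\eta)=0$ means $F^e\tilde\eta\in F^e(\Image V)$, i.e.\ some lift of $\eta$ lies in $\Ker F^e$. The condition $F^e\tilde\eta\in\Image V$ only says that $F^e\eta=0$ in $H^d_\m(F^e_*A)$.

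The genuine gap is in Part (2). To use the ladder for $0\to W_nA\xrightarrow{[f]}W_nA\to W_nA/([f])\to 0$, you need a lift $\tilde\eta\in H^d_\m(W_nA)$ of $\eta$ with two properties at once. It must lie in the Witt-level tight closure $\widetilde{0^*_{A,n}}$, so that $F^e_*[c]F^e(\tilde\eta)=0$ for $e\gg0$. It must also be annihilated by $[f]$, so that $\tilde\eta=\sigma_n(\beta)$ with $\beta\in H^{d-1}_\m(W_nA/([f]))$. From $f\eta=0$ you only get $[f]\tilde\eta\in\Image V$. At a fixed level there is no evident way to correct $\tilde\eta$ inside $\widetilde{0^*_{A,n}}$ to remove this error, and your proposal does not address it.

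You also assign quasi-$F$-injectivity the wrong role. Injectivity of the connecting maps $\sigma_n$ and $F^e_*\sigma_n$ is automatic from Cohen--Macaulayness of $W_nA$. On the $Q$-side, nothing you say makes $H^{d-1}_\m$ of the relevant modules vanish. Moreover, for $e\ge 2$ the module $Q^e_{A,n}(c)$ is not an $F^e_*W_nA$-module, so ``multiplication by $[f]^{p^e-1}$'' on it is not defined.

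Conversely, the obstacle you single out, $W_nA/([f])\neq W_n(A/f)$, is harmless. The natural map $H^{d-1}_\m(W_hA/([f]))\to H^{d-1}_\m(W_h(A/f))$ carries the tight-closure-type submodule into $\widetilde{0^*_{A/(f),h}}$. Its image under $R^{h-1}$ is $0^*_{A/(f),h}=0$.

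The missing idea is how the paper produces such a lift, and this is exactly where Part (1) enters.
\begin{itemize}
\item By localization and induction on $\dim A$, reduce to the case where $\omega_A/\tau_h(\omega_A)$ has finite length. Then some $f^r$ kills $0^*_{A,h}=0^*_{A,h+m}$.
\item Combine this with $V^{-1}(\widetilde{0^*_{A,n+1}})=F_*\widetilde{0^*_{A,n}}$. An induction on $m$ then gives $[f^{2r}]\widetilde{0^*_{A,\infty}}=0$, where $\widetilde{0^*_{A,\infty}}=\varprojlim_n\widetilde{0^*_{A,n}}$.
\item For $x\in\widetilde{0^*_{A,\infty}}$, $F^r([f]x)=[f^{p^r}]F^r(x)$ lies in $[f^{2r}]\widetilde{0^*_{A,\infty}}=0$, since $p^r\ge 2r$. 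Because $F$ is injective on $\varprojlim_n H^d_\m(W_nA)$ by Part (1), this gives $[f]\widetilde{0^*_{A,\infty}}=0$.
\item A Mittag--Leffler argument lifts any $\alpha\in 0^*_{A,h}$ to $\alpha_\infty\in\widetilde{0^*_{A,\infty}}$. Its truncation $\alpha_h$ has both required properties.
\item Write $c=f^sc'$ with $c'\notin(f)$, using that $(f)$ is prime. The ladder then yields $\alpha=R^{h-1}(\alpha_h)=0$.
\end{itemize}
Finally, Cohen--Macaulayness of $A$ comes from that of $A/(f)$ together with $f$ being a nonzerodivisor, not from quasi-$F$-injectivity.
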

Note that quasi-$F$-regularity, quasi-$F$-splitting, quasi-$F$-rationality and quasi-$F$-injectivity do not deform in general, even for Gorenstein rings (see Example~\ref{example:fedder-qFs}(3) and Remark~\ref{example-inv-adj}).

Using ideas from \cite{TWY}, \cite{KTTWYY3} and \cite{STY}, 
we also investigate the relationship between quasi-$F$-injective (resp.~quasi-$F$-rational) singularities and pseudo-Du~Bois (resp.~pseudo-rational) singularities.
Our new results are summarized as follows.
\begin{theoremC}
The following assertions hold. 
\begin{enumerate} 
    \item \textup{(Theorem~\ref{rational-to-qFrat})} In dimension two, pseudo-rational singularities are quasi-$F$-rational.  
    \item \textup{(Theorems~\ref{qFinj-to-Du-Bois} and \ref{thm:DuBois-to-qFinj})} If $A$ is quasi-$F$-injective, then $\Spec A$ has pseudo-Du~Bois singularities. The converse holds in dimension two if, in addition, the residue field $A/\m$ is perfect. 
\end{enumerate}
\end{theoremC}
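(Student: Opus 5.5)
Theorem~C has two parts, (1) and (2), and I would prove them separately, both through the local cohomology criteria of Definition~\ref{def:quasi-F-invariants}. Throughout, fix a resolution $\pi\colon Y\to \Spec A$. In dimension two I would take it to be the minimal resolution, which exists for excellent surfaces.

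\textbf{Part (1).} Pseudo-rationality in dimension two is equivalent to rationality: $A$ is Cohen--Macaulay and $H^1(Y,\sO_Y)=0$. I would follow the strategy of \cite{KTTWYY3} for quasi-$F$-regularity of klt surfaces, replacing log discrepancy bounds by the vanishing of $R^1\pi_*\sO_Y$.

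First, by the criterion, it is enough to find a single $n$ such that for every $e$ the map $H^2_\m(A)\to H^2_{\m_n}(Q^e_{A,n})$ is injective on the socle, after twisting by a test element.

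Second, I would compare with $Y$. The map $H^2_\m(A)\to H^2_{\m_n}(Q^e_{A,n})$ factors through $H^2_E(Y,\cdot)$. By the Leray spectral sequence, $R^1\pi_*\sO_Y=0$ makes $H^2_\m(A)\cong H^2_E(Y,\sO_Y)$.

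Third, I would show that $R^1\pi_*W_n\sO_Y=0$, by induction on $n$ using the exact sequences
\[0\to F_*^{n-1}\sO_Y\to W_n\sO_Y\to W_{n-1}\sO_Y\to 0.\]
The Witt-vector analogue of the vanishing then reduces injectivity to the surjectivity of
\[H^0(Y,Q^e_{Y,n}(K_Y+E))\to H^0(E,\ldots).\]
I would handle this surjectivity with the Witt-vector Kawamata--Viehweg-type vanishing for surfaces proved in \cite{KTTWYY3}, valid for $n$ large. The main obstacle is choosing an effective divisor so that the relevant $\Q$-divisor on $Y$ is $\pi$-nef with fractional part of normal crossings. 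Rationality gives this: for a rational surface singularity, $-Z_{\mathrm{fund}}$ is $\pi$-nef and Artin's criterion holds. I would try the fundamental cycle first.

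\textbf{Part (2), quasi-$F$-injective implies pseudo-Du~Bois.} I would adapt \cite{Schwede09_2} and \cite{KTTWYY3}*{Corollary~3.15}. The key step is that the composition
\[H^d_\m(A)\to H^d_E(Y,\sO_{E})\to H^d_{\m_n}(Q_{A,n})\]
must be compatible with the map $H^d_\m(A)\to H^d_{\m_n}(Q_{A,n})$. Here $E$ is the reduced exceptional divisor, and the comparison map comes from the fact that Frobenius factors through the reduction. Since $H^d_\m(A)\to H^d_{\m_n}(Q_{A,n})$ is injective, the kernel of $H^d_\m(A)\to H^d_E(Y,\sO_E)$ vanishes. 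That vanishing is the pseudo-Du~Bois condition of Definition~\ref{def:p-DB}.

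\textbf{Part (2), converse in dimension two.} A two-dimensional normal pseudo-Du~Bois singularity has reduced exceptional divisor $E$ with $H^1(Y,\sO_Y)\to H^1(E,\sO_E)$ injective. I would use Theorem~B(1) for the Cohen--Macaulay ring $A$. It then suffices to show injectivity on the limit
\[\varprojlim_n H^2_{\m_n}(W_nA)\to \varprojlim_n F_*H^2_{\m_n}(W_nA).\]
I would compute this on $Y$ via $H^1(E,W_n\sO_E)$. Perfectness of the residue field lets me apply Dieudonné theory: Frobenius on $\varprojlim_n H^1(E,W_n\sO_E)$ is injective modulo the unipotent (supersingular) part. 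I expect this Dieudonné-module step to be the main difficulty, together with controlling the unipotent part via the injectivity $H^1(Y,\sO_Y)\hookrightarrow H^1(E,\sO_E)$.
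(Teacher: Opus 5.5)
\textbf{Part (2), forward direction.} This step has a genuine gap. As written, the target $H^d_E(Y,\sO_E)$ is zero for dimension reasons ($\dim E=d-1$), so the kernel of $H^d_\m(A)\to H^d_E(Y,\sO_E)$ is all of $H^d_\m(A)$. The Matlis dual of $\pi_*\omega_Y(E)=\omega_A$ is instead the injectivity of $\psi_1\colon H^d_\m(A)\to H^d_\m(\sO_Y(-E))$, the dual of the trace $\pi_*\omega_Y(E)\to\omega_A$.

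More importantly, the comparison you want is not available for $Q_{A,n}$, i.e.\ for a single Frobenius. What the paper proves is the following. After replacing an arbitrary $\pi$ by a higher model carrying a $\pi$-ample $\pi$-exceptional Cartier divisor $H$ with $R\pi_*W_n\sO_Y(H)\simeq\pi_*W_n\sO_Y(H)$, the iterate $F^e$ on $H^d_\m(W_nA)$ factors through $\psi_n\colon H^d_\m(W_nA)\to H^d_\m(W_n\sO_Y(-E))$ once $-p^eE\le H$. Hence $\Ker\psi_n\subseteq K^e_n$ only for large $e$. To use this you need quasi-$F^e$-injectivity for that large $e$. Unlike in Schwede's setting, this does not follow formally from quasi-$F$-injectivity: it is Proposition~\ref{cor:qFinj-to-qF^einj}, which needs the Cohen--Macaulay hypothesis and gives $R^{n_0-1}(K^e_{n_0})=0$ for some $n_0$. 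A diagram chase with $V$ and $R$, using surjectivity of the maps $\psi$, then gives $\Ker\psi_1=R^{n_0-1}(\Ker\psi_{n_0})=0$. Note also that you cannot simply fix a resolution in arbitrary dimension. The definition quantifies over all projective birational $\pi$ with normal source, so the argument must run for an arbitrary such $\pi$.

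\textbf{Part (2), converse.} Your Dieudonn\'e step rests on a misconception, and a different step is missing. Frobenius on $\varprojlim_n H^1(E,W_n\sO_E)$ is injective outright, supersingular components included. Lemma~\ref{qFinj-resolution}(2) shows this by three steps: split $B_{E,n}=\operatorname{Coker}(F)$ over the components; use Serre's isomorphism $B_{E_i,n}\cong B_n\Omega_{E_i}$, under which the transition maps are Cartier operators; then stabilization of $H^0(E_i,B_n\Omega_{E_i})$ gives $\varprojlim_n H^0(E,B_{E,n})=0$. The hypothesis $H^1(Y,\sO_Y)\hookrightarrow H^1(E,\sO_E)$ could not control a ``unipotent part'' anyway. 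For the cone over a supersingular elliptic curve, Frobenius on $H^1(E,\sO_E)$ is nilpotent, the hypothesis holds, and the ring is quasi-$F$-injective.

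What you omit is the contribution of $Y$. The group $\varprojlim_n H^2_\m(W_n\mathcal{I}_E)$ is an extension of $\varprojlim_n H^2_\m(W_n\sO_Y)$ by $\varprojlim_n H^1_\m(W_n\sO_E)$. You therefore also need injectivity of Frobenius on $\varprojlim_n H^2_\m(W_n\sO_Y)$. This is Lemma~\ref{qFinj-resolution}(1): a vanishing on $Y$ with an anti-ample boundary, Grauert--Riemenschneider, and the induction from Proposition~\ref{cor:qFinj-to-qF^einj}. The snake lemma and a Mittag--Leffler argument then give injectivity of $H^2_\m(\mathcal{I}_E)\to H^2_\m(Q^E_{Y,n})$. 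Pseudo-Du~Bois enters only through $H^2_\m(A)\cong H^2_\m(\mathcal{I}_E)$.

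\textbf{Part (1).} The outline is right: compare with $Y$ via $H^2_\m(A)\cong H^2_\m(\sO_Y)$, then use a vanishing on $Y$ for $n\gg0$. However, you never say how the element $c$ reaches $Y$, and ``one $n$ for every $e$'' is not what the method yields. The paper proceeds as follows.
\begin{enumerate}
\item Write $Q^e_{A,n}(c)\cong Q^e_{A,p^{-e}\operatorname{div}(c),n}$.
\item Set $B_Y=p^{-e}(G+\pi^*\operatorname{div}(c))$, with $G$ effective, exceptional and $\pi$-anti-ample, and $e\gg0$ so that $\lfloor B_Y\rfloor=0$.
\item Use rationality only to get $H^2_\m(A)\cong H^2_\m(\sO_Y)=H^2_\m(\sO_Y(B_Y))$.
\item Prove $H^1_\m(B^e_{Y,B_Y,n})=0$ by torsion-freeness and Serre vanishing, with $n\gg0$ depending on $e$.
\end{enumerate}
The fundamental cycle does not fit this: it is integral, which breaks $\lfloor B_Y\rfloor=0$, and only $\pi$-nef, so Serre vanishing does not apply. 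The twist $K_Y+E$ also does not fit: it is adapted to $\pi_*\omega_Y(E)$, the pseudo-Du~Bois condition, whereas rationality concerns $\pi_*\omega_Y$.
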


In general, $F$-singularities are more restrictive than the corresponding singularities in birational geometry (see Example~\ref{example:Fsing}). 
However, as discussed above, in dimension two, quasi-$F$-singularities agree with them under mild hypotheses.  
This is one advantage of considering quasi-$F$-singularities as opposed to classical $F$-singularities. 
It is natural to ask to what extent such results extend to higher dimensions.
In characteristic $p>41$, three-dimensional $\mathbb Q$-factorial klt singularities over a perfect field are quasi-$F$-regular (see \cite{KTTWYY2}*{Theorem~A}), and the bound $p>41$ is sharp (see \cite{KTTWYY2}*{Theorem~B}). 
On the other hand, 
by considering the affine cone over a supersingular K3 or abelian surface, one can construct a three-dimensional lc singularity that is not quasi-$F$-split in every characteristic $p>0$. We refer the reader to Theorem~\ref{thm:quasi-F-split of 3-dim klt} and Remark~\ref{remark:higher-dimensional case} for more details. 

Since quasi-$F$-singularities are closely related to singularities in birational geometry, it is natural to ask how a given singularity can be classified in terms of quasi-$F$-singularities.
When the singularity is a hypersurface, we explain criteria due to \cite{KTY22} and \cite{Yoshikawa25-fedder} that are analogous to Fedder's criterion for $F$-singularities. 

The paper is organized as follows. 
We start with preliminary material on singularities in birational geometry and rings of Witt vectors in Section~2. 
In Section~3, we recall classical $F$-singularities and their relationships with singularities in birational geometry. 
In Section~4, we introduce quasi-$F$-singularities and prove a deformation result for quasi-$F$-rationality. 
In Section~5, we compare quasi-$F$-singularities with singularities in birational geometry, focusing especially on quasi-$F$-rationality versus pseudo-rationality and quasi-$F$-injectivity versus the pseudo-Du~Bois property.  
In Section~6, we discuss Fedder-type criteria for hypersurface singularities and quasi-$F$-pure thresholds. 
Finally, Section~7 lists some open problems on quasi-$F$-singularities. 

\subsection*{Acknowledgements}
The authors are grateful to Kenta Sato for valuable discussions and to the referees for their careful reading of the manuscript and many useful comments. 
The first, second and third authors were supported by JSPS KAKENHI Grant Nos.~JP24K16897, 23K22383 and 25H00399, and JP24K16889, respectively. 
The first author was also supported by Inamori Foundation. 

\section{Preliminaries}
\subsection{Notation}
In this subsection, we summarize the notation that will be used in this paper. 
\begin{enumerate}
\item All rings are commutative with unit and are assumed to be Noetherian and excellent.
All schemes are assumed to be Noetherian, separated and excellent.  
\item For a ring $R$, the set $R^{\circ}$ denotes the set of elements of $R$ that do not lie in any minimal prime of $R$. 
\item Let $R$ be a normal domain and $D$ be a $\Q$-Weil divisor on $X:=\Spec R$. Then $H^0(X, \sO_X(\lfloor D \rfloor ))$ is denoted by $R(D)$. 
A canonical divisor of $X$ is denoted by $K_R$ and the associated module is denoted by $\omega_R$.  
\item
Suppose that $(R,\m)$ is a local ring. 
If $f \colon M \to N$ is an $R$-module homomorphism, then by abuse of notation, we also write $f \colon H^i_{\m}(M)\to H^i_{\m}(N)$ for the induced map  on local cohomology.
\end{enumerate}

\subsection{Singularities in birational geometry}
First, we recall pseudo-rational singularities and introduce a related notion of pseudo-Du~Bois singularities.
Since we work in positive characteristic, resolutions of singularities are not known to exist in general.
\begin{definition}\label{def:p-DB}
Let $(R,\m)$ be an excellent Cohen--Macaulay normal local ring with dualizing complex, and set $X:=\Spec R$. 
\begin{enumerate}
\item[(i)] (\cite{LT81}) We say that $X$ has \textit{pseudo-rational} singularities if for every projective birational morphism $\pi:Y \to X$ from a normal integral scheme $Y$, one has $\pi_*\omega_Y=\omega_X$. 
\item[(ii)] We say that $X$ has \textit{pseudo-Du~Bois} singularities if for every projective birational morphism $\pi \colon Y \to X$ from a normal integral scheme $Y$ whose exceptional locus is a divisor $E=\sum_i E_i$, one has $\pi_*\omega_Y(E)=\omega_X$. 
\end{enumerate}
\end{definition}

\begin{remark}
The terminology ``pseudo-Du~Bois singularities'' is not standard.
The notion is intended as a characteristic-free version of Du~Bois singularities in the Cohen--Macaulay setting, in the same spirit as pseudo-rational singularities.
\end{remark}

\begin{lemma}\label{DB lem}
    Let $X$ be a normal integral scheme.
    Suppose that there exists a log resolution $\pi\colon Y\to X$ with reduced exceptional divisor $E$ such that $\pi_*\omega_Y(E)=\omega_X$.
    Then, for every projective birational morphism $f\colon Z\to X$ from a normal integral scheme $Z$ whose exceptional locus is a divisor $F=\sum_i F_i$, 
    we have $f_{*}\omega_Z(F)=\omega_X$. 
\end{lemma}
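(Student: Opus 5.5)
We compare $Z$ with the given log resolution $Y$ through a common model, and we work inside the function field $K(X)$, using that all the pushforwards are subsheaves of $\omega_{K(X)}$. Since $f_*\omega_Z(F)$ is torsion free and agrees with $\omega_X$ away from the codimension $\ge 2$ locus $f(F)$, and $\omega_X$ is reflexive, we always have $f_*\omega_Z(F)\subseteq \omega_X$. We therefore only need to prove $\omega_X\subseteq f_*\omega_Z(F)$. Equivalently, we must show that any rational top form $s$ with $s\in\omega_X$ satisfies $\operatorname{ord}_{F_i}(\operatorname{div} s+K\text{-data})\ge -1$ along each $f$-exceptional prime $F_i$. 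More concretely, viewing $\omega_X=\sO_X(K_X)$ with $K_X=\pi_*K_Y$, we need $\operatorname{div}(s)+K_Z+F\ge 0$ at the generic point of every $F_i$.

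\textbf{Step 1 (reduce to discrepancy statements).} The hypothesis $\pi_*\omega_Y(E)=\omega_X$ says that for every $s$ with $D:=\operatorname{div}(s)+K_X\ge 0$, the divisor $\operatorname{div}(s)+K_Y+E$ is effective on $Y$. Write $M:=\operatorname{div}(s)+K_Y+E$; it is an effective divisor on $Y$ with $\pi_*M=D\ge 0$. Each $F_i$ defines a divisorial valuation $v_i$ of $K(X)$ whose center on $X$ lies in the exceptional locus. We must show
\[
v_i(\operatorname{div}(s))+a(F_i)\ \ge\ -1,
\]
where $a(F_i)=\operatorname{ord}_{F_i}K_Z$ is computed with respect to the same meromorphic top form. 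Because $M=\operatorname{div}(s)+K_Y+E\ge0$, we have $\operatorname{div}(s)+K_Y\ge -E$ locally near the center $c_Y(F_i)$ of $v_i$ on $Y$. Since $Y$ is regular near that center and $E$ is snc, we can compute $v_i(K_{Z}-g^*K_Y)$ via the log-discrepancy of the snc pair $(Y,E)$.

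\textbf{Step 2 (key input: log canonicity of snc pairs).} The center $c_Y(F_i)$ is contained in $\pi^{-1}(\Exc)\subseteq E$. The pair $(Y,E)$ with $Y$ regular and $E$ reduced snc is lc; in fact the log discrepancy satisfies $a(v;Y,E)\ge 0$, i.e. $v(K_{\cdot}) - v(K_Y+E)\ge -1$ for every divisorial valuation $v$. This holds in arbitrary characteristic for regular excellent schemes, by the usual computation: $K_{Y'}+E'=\text{pullback}(K_Y+E)+\text{effective}$ on blow-ups along strata, and every divisorial valuation is reached this way, or one computes it directly with monomial valuations. Combining this with $\operatorname{div}(s)+K_Y+E\ge0$, pulled back to a common resolution of $Y$ and $Z$ (or just evaluated by $v_i$), gives
\[
v_i(\operatorname{div}(s))+a(F_i)\ \ge\ v_i(M)-1\ \ge\ -1,
\]
which is exactly what we need. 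For non-exceptional components there is nothing to check, because $f$ is an isomorphism at their generic points.

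\textbf{Main obstacle.} The delicate point is justifying the lc inequality $v(K_{Y}{}_{\text{rel}})\ge v(E)-1$ for an arbitrary divisorial valuation $v$ centered on a regular excellent scheme in positive/mixed characteristic, without resolution of singularities. We plan to argue via a regular system of parameters $x_1,\dots,x_r$ at the center with $E=(x_1\cdots x_k=0)$. We use that $dx_1\wedge\cdots\wedge dx_r/(x_1\cdots x_k)$ generates $\omega_Y(E)$ there, and estimate $v$ of its pullback. This requires $v(\text{Jacobian})\ge \sum_{j} v(x_j)-1$, which we deduce from Zariski's result that divisorial valuations are reached by finitely many blow-ups of centers (valid for excellent surfaces/arbitrary via Abhyankar's local uniformization for divisorial valuations). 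If that is unavailable, we fall back on working on the normalized blow-up $Z$ directly and reduce to codimension-two points, where the claim is classical.
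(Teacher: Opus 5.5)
Your argument is correct and is essentially the paper's proof, carried out one valuation at a time. The paper takes $W$ to be the normalization of the main component of $Y\times_X Z$, with $\theta\colon W\to Y$ and $h\colon W\to X$. It uses that the snc pair $(Y,E)$ is lc, which it takes for granted, to get $\theta_*\omega_W(\Exc(h))=\omega_Y(E)$, and then concludes from $\omega_X=h_*\omega_W(\Exc(h))\subseteq f_*\omega_Z(F)\subseteq\omega_X$.

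One slip does no harm: the center of $F_i$ on $Y$ need not lie in $E$, for instance if $f$ blows up a regular point of $X$ over which $\pi$ is an isomorphism. You never use this, though, since $a(v;Y,E)\ge 0$ holds for every divisorial valuation over $Y$.
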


\begin{proof}
Let $W$ be the normalization of the irreducible component of $Y\times_X Z$ dominating $Y$.
We then have the following commutative diagram:
    \[
\begin{tikzcd}
W \arrow[r, "\theta"] \arrow[d,"g"]\arrow[rd,"h"]  & Y \arrow[d, "\pi"] \\
 Z \arrow[r, "f"] & X.
\end{tikzcd}    
\]
Since $(Y,E)$ is a simple normal crossing pair, there exists a natural pullback morphism
$\theta^* \colon \omega_Y(E) \to 
\theta_{*}\omega_W(\Exc(h))$ 
such that the composition 
\[
\omega_Y(E)\xrightarrow{\theta^{*}} \theta_{*}\omega_W(\Exc(h))
\hookrightarrow \omega_Y(E)
\]
is an isomorphism. Consequently,  
$\theta_{*}\omega_W(\Exc(h))=\omega_Y(E)$, 
and pushing this forward via $\pi$ yields 
\[
h_{*}\omega_W(\Exc(h))=\pi_*\omega_Y(E)=\omega_X. 
\]
It follows that the composition of natural inclusions
\[
h_{*}\omega_W(\Exc(h)) \hookrightarrow 
f_{*}\omega_Z(F) 
\hookrightarrow \omega_X
\]
is an isomorphism, and therefore $f_{*}\omega_Z(F)=\omega_X$. 
\end{proof}

In characteristic zero, pseudo-rational singularities coincide with rational singularities. 
An analogous result holds for pseudo-Du~Bois singularities.
\begin{corollary}
Let $(R,\m)$ be a Cohen--Macaulay normal local ring essentially of finite type over the field $\mathbb{C}$ of complex numbers. 
Then $\Spec R$ has pseudo-Du~Bois singularities if and only if it has Du Bois singularities. 
\end{corollary}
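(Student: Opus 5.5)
The plan is to pass through a log resolution and use the standard characterization of Du~Bois singularities for Cohen--Macaulay varieties in characteristic zero. Over $\mathbb{C}$, resolutions exist: take the local ring $R$ as the localization of a finitely generated $\mathbb{C}$-algebra, spread out, and choose a log resolution $\pi\colon Y\to X=\Spec R$ such that the exceptional locus is a divisor $E$ with simple normal crossings. We may also arrange that $\pi$ is an isomorphism over the smooth locus of $X$.

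The key input is the known result of Kov\'acs--Schwede--Smith: if $X$ is normal and Cohen--Macaulay, then $X$ is Du~Bois if and only if $\pi_*\omega_Y(E)=\omega_X$ for one (equivalently, every) such log resolution. Their argument uses $\underline{\Omega}^0_X\simeq R\pi_*\sO_E$-type descriptions together with Grothendieck duality, $\omega_X$ being $h^{-\dim X}$ of the dualizing complex in the CM case. I will invoke this as a black box. The localization from a finite type variety to the local ring $R$ is harmless, since both conditions are local and compatible with localization.

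\emph{($\Rightarrow$)} Suppose $\Spec R$ is pseudo-Du~Bois. Apply Definition~\ref{def:p-DB}(ii) to the log resolution $\pi$, whose exceptional locus is the divisor $E$. This gives $\pi_*\omega_Y(E)=\omega_X$, so $X$ is Du~Bois by the criterion above.

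\emph{($\Leftarrow$)} Suppose $X$ is Du~Bois. The criterion gives $\pi_*\omega_Y(E)=\omega_X$ for the chosen log resolution. Lemma~\ref{DB lem} then yields $f_*\omega_Z(F)=\omega_X$ for every projective birational $f\colon Z\to X$ from a normal integral scheme whose exceptional locus is a divisor $F$. This is precisely the pseudo-Du~Bois condition.

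The main obstacle is the precise form of the Kov\'acs--Schwede--Smith criterion: its normality and CM hypotheses, and whether it requires $E$ to be the full reduced exceptional divisor with $\pi$ an isomorphism off the singular locus. I would check that our choice of log resolution fits those hypotheses, which is why I insist that $\pi$ be an isomorphism over the smooth locus and that its exceptional locus be a divisor. Everything else is formal given Lemma~\ref{DB lem}.
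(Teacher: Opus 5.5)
Your proposal is correct and is the same argument as the paper's, which is a one-line proof combining the Kov\'acs--Schwede--Smith criterion with Lemma~\ref{DB lem}. The criterion says that a normal Cohen--Macaulay variety over $\mathbb{C}$ is Du~Bois if and only if $\pi_*\omega_Y(E)=\omega_X$ for a log resolution with reduced exceptional divisor $E$. You specialize the pseudo-Du~Bois condition to a projective log resolution for one direction, and spread Du~Bois-ness to all models via Lemma~\ref{DB lem} for the other. The localization step is indeed as harmless as you say.
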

\begin{proof}
This follows immediately from \cite{KSS10}*{Theorem 1.1} and Lemma~\ref{DB lem}. 
\end{proof}

Next, we recall the definitions of singularities of the minimal model program in arbitrary characteristic. 
\begin{definition}
Let $(R,\m)$ be an excellent normal local ring with dualizing complex and $\Delta$ be an effective $\Q$-Weil divisor on $X:=\Spec R$ such that $K_X+\Delta$ is $\Q$-Cartier. 
\begin{enumerate}
\item[(i)] 
Given a projective birational morphism $\pi:Y \to X$ from a normal integral scheme $Y$, for each prime divisor $E$ on $Y$, the \textit{discrepancy} $a_E(X,\Delta)$ of the pair $(X,\Delta)$ at $E$ is defined as 
\[
a_E(X,\Delta)=\mathrm{ord}_E(K_Y-\pi^*(K_X+\Delta)). 
\]
It is said that $(X,\Delta)$ is \textit{klt} (resp.~\textit{lc}) if $a_E(X,\Delta)>-1$ (resp.~$a_E(X,\Delta) \ge -1$) for every projective birational morphism $\pi:Y \to X$ from a normal integral scheme $Y$ and for every prime divisor $E$ on $Y$. 
When $\Delta=0$, we simply say that $X$ has klt or lc singularities. 
\item[(ii)] 
Suppose that $X$ is $\Q$-Gorenstein and has lc singularities. 
For an effective $\Q$-Cartier $\Q$-Weil divisor $D$ on $X$,  its \textit{lc threshold} $\lct(X;D)$ is defined as 
\[
\lct(X;D)=\sup\{t \in \Q_{\ge 0} \mid (X, t D) \textup{ is lc}\}. 
\]
\end{enumerate}
\end{definition}

\subsection{Rings of Witt vectors}\label{Witt ring}
In this subsection, we briefly review some basic properties of rings of Witt vectors, which we will use later. 

Suppose that $A$ is a ring of prime characteristic $p$. 
For an integer $e \ge 1$ and an $A$-module $M$, let $F^e_*M$ denote the $A$-module whose underlying abelian group is $M$ and whose scalar multiplication is given by $r \cdot x:=r^{p^e}x$ for $r \in A$ and $x \in F^e_*M$. 
To distinguish elements of $F^e_*M$ from those of $M$, we write an element of $F^e_*M$ in the form $F^e_*x$ with $x \in M$. 
We say that $A$ is \textit{$F$-finite} if $F \colon \Spec A \to \Spec A$ is a finite morphism, or equivalently if $F_*A$ is a finitely generated $A$-module.

Given an integral domain $A$ of characteristic $p>0$ and an integer $n \ge 1$, the ring $W_n(A)$ of \textit{Witt vectors} of length $n$ is the set 
\[
W_nA=A^{\oplus n}=\{(a_0, \dots, a_{n-1}) \mid a_0, \dots, a_{n-1} \in A\}
\]
equipped with a certain ring structure. 
$W_1 A$ coincides with $A$. 
If $n \ge 2$, then the quotient ring of $W_nA$ by its nilradical is naturally isomorphic to $A$. 
The \textit{Teichm\"{u}ller lift} $[a]$ of an element $a \in A$ in $W_nA$ is defined as 
\[
[a] \coloneqq (a,0,0, \dots, 0) \in W_nA.
\]
Then $[0]$ is the zero element and $[1]$ is the identity element of $W_nA$. 
In general, multiplication by $[a]$ is defined as follows: 
\[
[a] (b_0, \dots, b_{n-1}) = (ab_0,a^pb_1, a^{p^2}b_2, \dots, a^{p^{n-1}}b_{n-1}) \quad ((b_0, \dots, b_{n-1}) \in W_nA). 
\]
If $f \colon A \to B$ is a ring homomorphism, where $B$ is an integral domain of characteristic $p>0$, then the induced map 
\[
W_nf \colon W_nA \to W_nB; \quad (a_0, \dots, a_{n-1}) \mapsto (f(a_0), \dots, f(a_{n-1}))
\]
is a ring homomorphism. 
The endomorphism $W_nF\colon W_nA \to W_nA$ will simply be denoted by $F$ and called the Frobenius map on $W_nA$. Although $W_nA$ has characteristic $p^n$ rather than $p$, we can still define $F^e_*M$ for any $W_nA$-module $M$ just as in the case of $A$-modules, by viewing $M$ as a $W_nA$-module via the $e$-times iterated Frobenius map $F^e \colon W_nA \to W_nA$.  

The ring $W_nA$ has three important $W_nA$-module homomorphisms:
\begin{alignat*}{3}
&(\textrm{Frobenius}) &&F \colon 
W_nA \longrightarrow F_*W_nA & F(a_0,a_1,\dots ,a_{n-1}) &=(a_0^p,a_1^p,\dots, a_{n-1}^p) ,  \\ 
&(\textrm{Verschiebung}) \hspace*{0.5em} && V \colon 
F_*W_{n-1}A \longrightarrow W_{n}A \quad & V(a_0,a_1,\dots ,a_{n-2})&=(0,a_0,\dots, a_{n-2}), \\
&(\textrm{Restriction}) && R \colon 
W_{n}A \longrightarrow W_{n-1}A & R(a_0,a_1,\dots ,a_{n-1}) &=(a_0,a_1,\dots, a_{n-2}).
\end{alignat*}
The restriction $R$ is also a ring homomorphism. Therefore, $W_m A$ is naturally a $W_nA$-algebra for $m \ge n$. 
Moreover, there exists a natural exact sequence 
\[
0 \to F^e_*W_{n}A \xrightarrow{V^e} W_{n+e}A \xrightarrow{R^n} W_e A \to 0.
\]
We note that if $A$ is Noetherian and $F$-finite, then $W_nA$ is Noetherian and the Frobenius map on $W_nA$ is finite for every $n \geq 1$, by the exact sequence above. 

Since the formation of $W_nA$ commutes with localization, the affine construction glues to arbitrary $\mathbb F_p$-schemes.
Thus, for an $\F_p$-scheme $X$, we obtain a scheme $W_nX:=(|X|, W_n\sO_X)$, whose dualizing sheaf, when it exists, is denoted by $W_n\omega_X$. 
Similarly, for a sheaf $\mathcal{A}$ of $\sO_X$-algebras, we obtain a sheaf $W_n\mathcal{A}$ of $W_n\mathcal{O}_X$-algebras, equipped with the corresponding $W_n\mathcal{O}_X$-module homomorphisms $F$, $V$ and $R$ as above. 
If $\pi \colon Y \to X$ is a morphism of $\F_p$-schemes, then functoriality gives a morphism $W_n \pi:W_n Y \to W_nX$ for each $n \ge 1$. 
If $\mathcal A$ is a sheaf of $\mathcal O_Y$-algebras, we often write $\pi_*W_n\mathcal{A}$ for $(W_n\pi)_*W_n\mathcal{A}$. 

For the rest of this section, suppose that $(A,\m)$ is a Noetherian local ring. 
Then $W_nA$ is a local ring of characteristic $p^n$ with maximal ideal
\[
\m_n=\{(a_0, \dots, a_{n-1}) \in W_nA \mid a_0 \in \m\}
\]
and the residue field $W_nA/\m_n$ is isomorphic to $A/\m$. 
The local cohomology module $H^i_{\m_n}(M)$ of a $W_nA$-module $M$ is often denoted simply by $H^i_\m(M)$. 
This notation is justified because $\Spec W_nA$ is naturally identified with $\Spec A$ as a topological space and $\m_n$ corresponds to $\m$ under this identification (see, for example, \cite{STY}*{\S 2.8} for further details). 

\begin{proposition}\label{Witt CM}
If $(A,\m)$ is a Cohen--Macaulay $F$-finite local ring of characteristic $p>0$, then $W_nA$ is also Cohen--Macaulay for all integers $n \ge 1$. 
\end{proposition}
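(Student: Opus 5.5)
Proceed by induction on $n$, using the exact sequence of $W_{n}A$-modules
\[
0 \to F_*W_{n-1}A \xrightarrow{V} W_{n}A \xrightarrow{R^{n-1}} A \to 0
\]
(the case $e=1$ of the sequence recalled above, with $n$ replaced by $n-1$). The case $n=1$ is the hypothesis. Set $d=\dim A$. Since $\Spec W_nA$ is homeomorphic to $\Spec A$ and $W_nA$ is a Noetherian local ring (by $F$-finiteness, as noted above), we have $\dim W_nA=d$. Hence it suffices to show that $H^i_{\m}(W_nA)=0$ for all $i<d$, the local cohomology being computed over $W_nA$ with respect to $\m_n$.

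The key point is that local cohomology of these modules is insensitive to the base ring. For a $W_nA$-module $M$ that is also a module over a quotient or over $W_nA$ via $F$ or $R$, the ideal $\m_n$ and the maximal ideal of the other ring define the same closed point up to radical. So $H^i_{\m_n}(M)$ agrees with the local cohomology computed over $A$ (for $M=A$ via $R^{n-1}$) or over $W_{n-1}A$ (for $M=F_*W_{n-1}A$). Here we use independence of base for local cohomology under a ring map with $\sqrt{\m_n S}=\m_S$; for $F_*$ it is a finite map, so the Čech complex on generators of $\m_n$ works. In particular:
\begin{itemize}
\item[] $H^i_{\m}(A)=0$ for $i<d$, since $A$ is Cohen--Macaulay;
\item[] $H^i_{\m_n}(F_*W_{n-1}A)\cong F_*H^i_{\m_{n-1}}(W_{n-1}A)=0$ for $i<d$, by the induction hypothesis.
\end{itemize}
This step, checking that $F_*$ and restriction of scalars commute with local cohomology, is the only point needing care. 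I would handle it by choosing a system of parameters $x_1,\dots,x_d$ of $A$ and using the Teichmüller lifts $[x_1],\dots,[x_d]\in W_nA$. Their images generate $\m$-primary ideals in each ring involved: under $R^{n-1}$ they map to $x_j$, and under $F$ they map to $[x_j]^p$. Computing with the Čech complex on these elements then gives compatibility for all three terms simultaneously.

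Finally, the long exact sequence
\[
H^{i}_{\m}(F_*W_{n-1}A) \to H^{i}_{\m}(W_nA) \to H^{i}_{\m}(A)
\]
shows that $H^i_{\m}(W_nA)=0$ for $i<d$. Therefore $\operatorname{depth} W_nA = d = \dim W_nA$, and $W_nA$ is Cohen--Macaulay.
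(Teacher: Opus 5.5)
Your proposal is correct and follows the paper's argument. Both induct on $n$ using the Verschiebung--restriction exact sequence $0 \to F_*W_{n-1}A \xrightarrow{V} W_nA \xrightarrow{R^{n-1}} A \to 0$, and both read off $H^i_{\m}(W_nA)=0$ for $i<d$ from the long exact sequence. Your Teichm\"uller-lift \v{C}ech argument, showing that local cohomology can be computed over $W_nA$, $W_{n-1}A$ or $A$ interchangeably, supplies a detail the paper leaves implicit.
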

\begin{proof}
For every integer $n \ge 1$, we have the exact sequence 
\[
0 \to F_*W_nA \xrightarrow{V} W_{n+1}A \xrightarrow{R^{n}} A \to 0.
\]
The Cohen--Macaulayness of a local ring $(R, \m_R)$ is equivalent to the vanishing of $H^i_{\m_R}(R)$ for all $i<\dim R$, so an inductive argument shows that $H^i_\m(W_nA)=0$ for all $i<\dim A=\dim W_n A$. 
\end{proof}

Let $\pi \colon X \to \Spec A$ be a morphism and $\mathcal{F}$ be a $W_n\sO_X$-module. 
The $W_n A$-module $H^i_{\m}(\mathcal{F})$ is defined as  
\[
H^i_{\m}(\mathcal{F}):=H^i_{\pi^{-1}(\{\m\})}(X, \mathcal{F}) \cong H^i(R\Gamma_{\m_n}R\pi_*\mathcal{F}),
\]
where $\m_n$ is the maximal ideal of $W_nA$. 
Let $E_n$ be an injective hull of the residue field $W_nA/\m_n\cong A/\m$ as a $W_nA$-module.
For a $W_nA$-module $M$, its Matlis dual is denoted by
\[
M^\vee := \Hom_{W_nA}(M,E_n).
\]
The $\m_n$-adic completion of $M$ is denoted by $M^{\wedge}$. 
We will use the following form of Matlis duality. 

\begin{lemma}[\textup{\cite{TWY}*{Lemma~2.9}, \cite{STY}*{Lemma~2.28}}]\label{lem:local duality}
Suppose that $X$ is a $d$-dimensional scheme and that $\pi \colon X \to \Spec A$ is a proper morphism. 
If $\mathcal{F}$ is a coherent $W_n\sO_X$-module, then there exists a natural isomorphism
\[
H^{d}_{\m}(\mathcal{F})^{\vee} \simeq \Hom_{W_n \sO_X}(\mathcal{F},W_n \omega_X)^{\wedge}.
\]
\end{lemma}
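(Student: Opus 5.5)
The plan is to reduce to the case of an affine scheme, apply Grothendieck local duality on the Cohen--Macaulay-free level of dualizing complexes, and use the properness of $\pi$ to push everything down to $\Spec W_nA$. Since $\pi$ is proper, $W_n\pi \colon W_nX \to \Spec W_nA$ is proper and $R\pi_*\mathcal{F}$ is a bounded complex with coherent cohomology over $W_nA$. By definition $H^d_\m(\mathcal{F}) = H^d(R\Gamma_{\m_n} R\pi_*\mathcal{F})$. I will use the dualizing complex $\omega^\bullet_{W_nA}$, normalized so that $W_n\omega_X$ is its pullback $\pi^!\omega^\bullet_{W_nA}$ in degree $-d$. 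To keep this normalization straightforward, I will assume the standard convention, as in \cite{TWY}, \cite{STY}, that $W_n\omega_X := \mathcal{H}^{-d}(W_n\pi^!\omega^\bullet_{W_nA})$.

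First, I apply local duality on $W_nA$. For a bounded complex $C$ with coherent cohomology there is a natural isomorphism $H^i_{\m_n}(C)^\vee \simeq \mathrm{Ext}^{-i}_{W_nA}(C,\omega^\bullet_{W_nA})^\wedge$. Taking $C = R\pi_*\mathcal{F}$ and $i=d$ gives
\[
H^d_\m(\mathcal{F})^\vee \simeq H^{-d}\bigl(R\Hom_{W_nA}(R\pi_*\mathcal{F},\omega^\bullet_{W_nA})\bigr)^\wedge .
\]

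Second, I apply Grothendieck duality for the proper morphism $W_n\pi$. This rewrites the right-hand side as $H^{-d}\bigl(R\Hom_{W_n\sO_X}(\mathcal{F}, W_n\pi^!\omega^\bullet_{W_nA})\bigr)^\wedge$.

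Third, I need to identify this $H^{-d}$ with $\Hom_{W_n\sO_X}(\mathcal{F}, W_n\omega_X)$. Since $X$ has dimension $d$, the dualizing complex $\omega_{W_nX}^\bullet := W_n\pi^!\omega^\bullet_{W_nA}$ has cohomology only in degrees $\ge -d$, and its lowest cohomology is $W_n\omega_X$. Indeed, each $\mathcal{H}^{-j}$ is supported in dimension $\le j$ by the standard dimension bounds for normalized dualizing complexes; if $\pi$ is not quasi-finite this may need the normalization via the codimension function. Consequently the spectral sequence, or simply truncation, shows that $R\mathcal{H}om(\mathcal{F},\omega^\bullet)$ lives in degrees $\ge -d$ with $\mathcal{H}^{-d} = \mathcal{H}om(\mathcal{F}, W_n\omega_X)$. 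Taking global sections is then left exact in the lowest degree, so
\[
H^{-d}R\Hom(\mathcal{F},\omega^\bullet) = \Hom(\mathcal{F}, W_n\omega_X).
\]
Naturality in $\mathcal{F}$ follows because every step is functorial. Completing finally gives the claimed isomorphism.

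The main obstacle is the third step. It requires the correct normalization and the amplitude bound for the dualizing complex on the non-reduced scheme $W_nX$, especially when the fibres of $\pi$ have positive dimension. I would handle it by using the homeomorphism $|W_nX| = |X|$ and comparing with the dualizing complex on $W_n$ of a finite-type model, or, if needed, by citing \cite{TWY}*{Lemma~2.9} for exactly this normalization.
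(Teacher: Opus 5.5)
Your proposal is correct and is essentially the standard argument behind the cited \cite{TWY}*{Lemma~2.9} and \cite{STY}*{Lemma~2.28} (the paper itself gives no proof, only these citations): local duality on $W_nA$, then Grothendieck duality for the proper morphism $W_n\pi$, then the fact that $\omega^\bullet_{W_nX}=(W_n\pi)^!\omega^\bullet_{W_nA}$ lies in degrees $\ge -d$ with $\mathcal{H}^{-d}=W_n\omega_X$. Two points to tighten: properness and finite type of $W_n\pi$ rely on the standing $F$-finiteness of $A$, and in your third step the bound ``degrees $\ge -d$'' does not follow from the support bound you quote, which is vacuous for $j>d$; it follows instead because $\delta(x)=\dim\overline{\{x\}}$ (closed points of $X$ lie over $\m$ with finite residue field extension, by properness), the stalk $\omega^\bullet_{W_nX,x}$ lives in degrees $[-\delta(x)-\dim\sO_{X,x},\,-\delta(x)]$, and $\delta(x)+\dim\sO_{X,x}\le d$, so no appeal to the lemma itself is needed.
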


%%%%%%%%%%%%%%%%%%%%%%%%%%%%%%%%%%%%%%%%%%%%%%%%%%%%%%%%%%%%%%%%%

\section{\texorpdfstring{$F$}{F}-singularities}
In this section, we recall the definition of $F$-singularities and give a brief overview of their relationship with singularities arising in birational geometry.
The reader is referred to \cite{TW18} for further details. 

\begin{definition}[\cite{FW89}]\label{F-inj def}
Let $(R,\m)$ be a $d$-dimensional excellent local ring of characteristic $p>0$. 
\begin{enumerate}
\item[(i)] 
We say that $R$ is \textit{$F$-injective} if for all integers $i$, the map 
\[
H^i_{\m}(R) \to H^i_{\m}(F_*R) 
\]
induced by the Frobenius map $F \colon R \to F_*R$ is injective. 
We use the same letter $F$ to denote this map. 
\item[(ii)] 
We say that $R$ is \textit{$F$-rational} if $R$ is Cohen--Macaulay and if for every element $c \in R^{\circ}$, there exists an integer $e \ge 1$ such that the composite map 
\[
H^d_{\m}(R) \xrightarrow{F^e} H^d_{\m}(F^e_*R) \xrightarrow{\cdot F^e_*c} H^d_{\m}(F^e_*R)
\]
of the $e$-times iteration $F^e$ of the map $F\colon H^d_{\m}(R) \to H^d_{\m}(F_*R)$ and the endomorphism on $H^d_{\m}(R)$ defined by multiplication by $F^e_*c$ is injective. 
\end{enumerate}
\end{definition}

In commutative algebra, a local property $P$ is said to \textit{deform} if for a nonzerodivisor $x$ on $R$, the local ring $R$ satisfies property $P$ whenever the quotient ring $R/(x)$ does. 
\begin{remark}
$F$-rationality deforms, and $F$-injectivity deforms under the additional assumption that the ring is Cohen--Macaulay (see \cite{Fedder83} and \cite{HH94}). 
It is a long-standing open problem whether $F$-injectivity deforms without the Cohen--Macaulay assumption.
\end{remark}

The above singularities are related to Hodge theoretic singularities. 
\begin{theorem}[\cite{Schwede09_2}, \cite{Smith97}]\label{Finjective=>DB}
With the notation as in Definition~\ref{F-inj def}, suppose in addition that $R$ has a canonical module. 
If $R$ is $F$-injective $($resp.~$F$-rational$)$, then it is pseudo-Du~Bois $($resp.~pseudo-rational$)$.     
\end{theorem}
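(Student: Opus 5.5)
Both implications go through Lemma~\ref{lem:local duality} with $n=1$ and $\mathcal F=\sO_Y$ on a projective birational $\pi\colon Y\to X=\Spec R$ from a normal $Y$. I would first reduce to the case where $R$ is complete and $F$-finite. In the $F$-injective case $R$ is Cohen--Macaulay by the convention in Definition~\ref{def:p-DB}, and $F$-rational rings are Cohen--Macaulay by definition. Duality then identifies the inclusion $\pi_*\omega_Y\hookrightarrow\omega_R$ with the Matlis dual of the natural map $\delta\colon H^d_\m(R)\to H^d_\m(Y,\sO_Y)$. Likewise, when $E=\Exc(\pi)$ is a divisor, $\pi_*\omega_Y(E)\hookrightarrow\omega_R$ is dual to $H^d_\m(R)\to H^d_\m(Y,\sO_Y(-E))$. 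So pseudo-rationality amounts to injectivity of $\delta$, and pseudo-Du~Bois amounts to injectivity of $\delta_E\colon H^d_\m(R)\to H^d_\m(Y,\sO_Y(-E))$.

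\emph{$F$-rational case.} This is Smith's argument. Put $N=\Ker\delta$, a submodule of $H^d_\m(R)$ stable under Frobenius, because $\delta$ commutes with $F$ on $Y$. Choose $c\in R^\circ$ such that $\pi$ is an isomorphism over $D(c)$. Then $c$ kills the kernel of $R\Gamma(\sO_X)\to R\pi_*\sO_Y$ up to a fixed power independent of Frobenius twists. A standard argument using finiteness of $H^i(Y,\sO_Y)$ for $i>0$ and the Frobenius action gives a single $c$ with $cF^e(N)=0$ for all $e$. $F$-rationality then forces $N=0$.

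\emph{$F$-injective case.} This is Schwede's argument, and I expect it to be the main obstacle. Let $\eta\in\Ker\delta_E$. Since $F$-injectivity gives injectivity of $F^e$ on $H^d_\m(R)$, it suffices to show $F^e(\eta)=0$ for $e\gg0$. The Frobenius on $\sO_Y(-E)$ factors as
\[
\sO_Y(-E)\to F^e_*\sO_Y(-p^eE)\to F^e_*\sO_Y(-E),
\]
so the images of $\eta$ land in $H^d_\m(Y,\sO_Y(-p^eE))$. One then needs
\[
\varprojlim_e\ \text{or}\ \bigcap_e\ \Image\bigl(H^d_\m(R)\to\dots\bigr)
\]
to be controlled. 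The key input is that $R\pi_*\sO_Y(-mE)$ stabilizes for large $m$ toward the $\m$-torsion contributions supported on $E$. Concretely, by Matlis duality, $\pi_*\omega_Y(p^eE)$ is an increasing chain inside $\omega_R$ which stabilizes by Noetherianity. I would combine this with the Frobenius trace
\[
F^e_*\omega_Y(p^eE)\to\omega_Y(E)
\]
to show that $\pi_*\omega_Y(E)$ is stable under the trace map $F^e_*\omega_R\to\omega_R$. Equivalently, its annihilator in $H^d_\m(R)$, namely $\Ker\delta_E$, is $F$-stable. Then $F$-injectivity shows that the dual $\omega_R/\pi_*\omega_Y(E)$ has surjective trace, and since it is supported on the image of $E$, a nilpotence argument (the trace is generically an isomorphism on $\omega_R$ while $c$ kills the quotient) yields that the quotient vanishes. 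The delicate points are making the nilpotence argument precise and handling non-$F$-finite or non-complete $R$, for which I would use the reduction to complete $F$-finite rings.
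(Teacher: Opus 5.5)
Your $F$-rational half is fine: it is Smith's argument. $\Ker\delta$ is Frobenius-stable. Via the spectral sequence $H^i_\m(R^j\pi_*\sO_Y)\Rightarrow H^{i+j}_\m(Y,\sO_Y)$, it is killed by a fixed power of any $c\in R^\circ$ such that $\pi$ is an isomorphism over $D(c)$. Hence $c^kF^e(\Ker\delta)=0$ for all $e$.

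The gap is in the last step of the $F$-injective half. $T$-stability of $N:=\pi_*\omega_Y(E)$ is automatic, since $\omega_Y(E)\subseteq\omega_Y(p^eE)$. Combined with surjectivity of the trace $T^e\colon F^e_*\omega_R\to\omega_R$, it only tells you that $\omega_R/N$ carries a surjective Cartier structure, is supported on $\pi(E)$, and is killed by some $c\in R^\circ$. Such modules need not vanish, so no nilpotence argument can finish. For example, take $R=k[[x,y,z]]/(x^3+y^3+z^3)$ with $p\equiv 1\pmod 3$ and $\pi$ the blow-up of $\m$. The submodule $\m\omega_R$ is $T$-stable, because every term of $f^{p-1}g$ with $g\in\m$ has degree $>3(p-1)$. $T$ is surjective because $R$ is $F$-pure. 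Yet $\omega_R/\m\omega_R\cong k\neq 0$. Applied to the parameter test submodule, the principle you invoke would even make every Cohen--Macaulay $F$-injective ring $F$-rational.

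The missing idea is to identify the value at which your chain $\pi_*\omega_Y(p^eE)$ stabilizes. Because $\pi$ is an isomorphism off $E$, each section of $\omega_R$ is a rational section of $\omega_Y$ with poles only along $E$. So $\pi_*\omega_Y(mE)=\omega_R$ for $m\gg0$, and for $p^e\gg0$
\[
T^e(F^e_*\omega_R)=T^e\bigl(F^e_*\pi_*\omega_Y(p^eE)\bigr)\subseteq\pi_*\omega_Y(E).
\]
Surjectivity of $T^e$ then gives $\pi_*\omega_Y(E)=\omega_R$ directly. This is the dual form of the step you left open in your local-cohomology attempt (``one then needs \dots\ to be controlled'').

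The paper only cites \cite{Schwede09_2}, \cite{Smith97} for the theorem itself. However, its proof of Proposition~\ref{qFinj-to-Du-Bois} supplies exactly this step when $n=1$. It chooses a $\pi$-ample, $\pi$-exceptional $H$ with $R\pi_*\sO_Y(H)\simeq\pi_*\sO_Y(H)$ and $-p^eE\le H$. Then $R\pi_*\sO_Y(-p^eE)\to R\pi_*\sO_Y$ factors through $R$. This yields $\varphi$ with $\varphi\circ F^e\circ\delta_E=F^e$ on $H^d_\m(R)$, hence $\Ker\delta_E\subseteq\Ker F^e=0$. The paper stays on the local cohomology side because that is what extends to $W_n\sO_Y$. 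For $n=1$, your dual formulation, corrected as above, is shorter and needs no such $H$, at the cost of working with $F$-finite complete $R$ so that $T$ is available.
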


\begin{definition}[\cite{HR76}, \cite{HH89}, \cite{hw02}]\label{F-pure def}
Let $(R,\m)$ be an $F$-finite normal local ring of characteristic $p>0$ and $\Delta$ be an effective $\Q$-Weil divisor on $X=\Spec R$. 
\begin{enumerate}
\item[(i)] We say that the pair $(R,\Delta)$ is \textit{$F$-pure} if for all integers $e \ge 1$, the composite map 
\[
R \xrightarrow{F^e} F^e_*R \to F^e_*R((p^e-1)\Delta)
\]
of the $e$-times iterated Frobenius map $F^e$ and the pushforward via $F^e$ of the natural inclusion $R \hookrightarrow R((p^e-1)\Delta)$ splits as an $R$-module homomorphism. 
\item[(ii)] We say that the pair $(R,\Delta)$ is \textit{strongly $F$-regular} if for all elements $c \in R^{\circ}$, there exists an integer $e \ge 1$ such that the composite map 
\[
R \xrightarrow{F^e} F^e_*R \to  F^e_*R((p^e-1)\Delta) \xrightarrow{\cdot F^e_*c} F^e_*R((p^e-1)\Delta)
\]
splits as an $R$-module homomorphism, where the last map is multiplication by $F^e_*c$. 
\end{enumerate}
When $\Delta=0$, we simply say that $R$ is $F$-pure or strongly $F$-regular.  
\end{definition}

\begin{remark}\label{F-sing basic}
Let the notation be as in Definition~\ref{F-pure def}. 
\begin{enumerate}
\item 
If $R$ is strongly $F$-regular, then it is $F$-pure and $F$-rational. 
If $R$ is either $F$-pure or $F$-rational, then $R$ is $F$-injective. 
\item 
(cf.~\cite{KTTWYY1}*{Lemma~2.19}) If $\Delta$ is an effective $\Z_{(p)}$-Weil divisor, then $(R,\Delta)$ is $F$-pure if and only if the map $R \to F^e_*R(p^e \Delta -\lfloor \Delta \rfloor)$ splits for all integers $e \ge 1$.  
\item 
(cf.~\cite{TW18}*{Lemma~3.12}) By Matlis duality, the pair $(R,\Delta)$ is $F$-pure if and only if the map 
\[
H^d_{\m}(\omega_R) \to H^d_{\m}(F^e_*R(p^eK_R+(p^e-1)\Delta)), 
\]
induced by tensoring the map $R \to F^e_*R((p^e-1)\Delta)$ with $H^d_{\m}(\omega_R)$, is injective for all integers $e \ge 1$. 
An analogous description holds for strong $F$-regularity. 
In particular, if $R$ is quasi-Gorenstein, that is, $\omega_R \cong R$, then $R$ is $F$-injective (resp.~$F$-rational) if and only if $R$ is $F$-pure (resp.~strongly $F$-regular).
\item 
(cf.~\cite{TT08}*{Lemma~4.5}) $(R,\Delta)$ is strongly $F$-regular if and only if for every effective Cartier divisor $D$ on $\Spec R$, there exists a rational number $\varepsilon>0$ such that $(R, \Delta+ \varepsilon D)$ is $F$-pure. 
\item 
Strong $F$-regularity and $F$-purity do not deform in general (see \cite{Singh99b} and \cite{Fedder83}). However, they deform for $\Q$-Gorenstein normal rings (see \cite{AKM98}, \cite{hw02} and \cite{PS23}). 
A stronger form of these deformation results is given by inversion of adjunction type results for strong $F$-regularity and $F$-purity. 
The reader is referred to \cite{Das15} and \cite{schwede09} for further details.
\end{enumerate}
\end{remark}

The following criterion for $F$-purity and strong $F$-regularity is known as a Fedder-type criterion and is particularly useful for hypersurface singularities. 
\begin{proposition}\label{Fedder-Fpure} 
Let $S=k[[X_1, \dots, X_n]]$ be a formal power series ring over an $F$-finite field of characteristic $p>0$ and $I$ be a nonzero ideal of $S$. 
Set $R:=S/I$. 
\begin{enumerate}
\item[(1)] $($\cite{Fedder83}$)$ $R$ is $F$-pure if and only if $(I^{[p]}:I) \not\subseteq \m^{[p]}$. 
In particular, when $I$ is a principal ideal generated by $f \in S$, the hypersurface $R$ is $F$-pure if and only if $f^{p-1} \notin \m^{[p]}$. 
\item[(2)] $($\cite{Glassbrenner96}$)$ Fix an element $c \in S \setminus I$ such that the localization $R_c=S_c/IS_c$ with respect to $c$ is a regular ring. 
Then $R$ is strongly $F$-regular if and only if there exists an integer $e \ge 1$ such that $c(I^{[p^e]}:I) \not\subseteq \m^{[p^e]}$. 
In particular, when $I$ is a principal ideal generated by $f \in S$, the hypersurface $R$ is strongly $F$-regular if and only if there exists an integer $e \ge 1$ such that $cf^{p^e-1} \notin \m^{[p^e]}$.  
\end{enumerate}
\end{proposition}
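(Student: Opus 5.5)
This is Fedder's criterion and Glassbrenner's criterion; we only need to sketch the proof.

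For (1), the approach is to describe $\Hom_R(F_*R,R)$ inside $\Hom_S(F_*S,S)$. Since $k$ is $F$-finite and $S$ is a complete regular local ring, $F_*S$ is a free $S$-module. By Kunz's theorem, $\Hom_S(F_*S,S)$ is a free $F_*S$-module of rank one, generated by a trace-type map $\Phi$. In the case $k$ perfect, $\Phi$ sends the monomial $F_*(X_1^{p-1}\cdots X_n^{p-1})$ to $1$ and all other basis monomials $F_*X^{a}$ with $0\le a_i\le p-1$ to $0$; for general $F$-finite $k$, adjoin a $p$-basis of $k$.

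Step 1: every $S$-linear map $F_*S\to S$ has the form $\Phi(F_*s\cdot -)$. Such a map descends to an $R$-linear map $F_*R\to R$ precisely when $s\in (I^{[p]}:I)$. Since $F_*S$ is projective, every $R$-linear map $F_*R\to R$ lifts, so we get an identification
\[
\Hom_R(F_*R,R)\cong F_*\bigl((I^{[p]}:I)/I^{[p]}\bigr).
\]

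Step 2: $R$ is $F$-pure if and only if some $\varphi\in\Hom_R(F_*R,R)$ satisfies $\varphi(F_*1)\notin\m R$; indeed $R$ is local, so surjectivity of the evaluation map is a Nakayama-type condition. Under the identification of Step 1, this becomes the existence of $s\in(I^{[p]}:I)$ with $\Phi(F_*(s\,S))\not\subseteq\m$. Using the explicit basis, one checks that $\Phi(F_*(sS))\subseteq\m$ if and only if $s\in\m^{[p]}$. This verification is the main technical point.

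Step 3: for $e\ge1$ one needs the analogous statement with $p^e$. By Remark~\ref{F-sing basic}, splitting for $e=1$ implies splitting for all $e$, since compositions of splittings give splittings. The principal case follows from $(f^p):(f)=(f^{p-1})$, because $S$ is a domain.

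For (2), we run the same argument with $F^e$. The map $R\to F^e_*R\xrightarrow{\cdot F^e_*c}F^e_*R$ splits if and only if $c(I^{[p^e]}:I)\not\subseteq\m^{[p^e]}$.

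The remaining task, and the main obstacle, is to show that testing a single $c$ with $R_c$ regular suffices rather than all $c\in R^\circ$. The plan is to use the standard test-element argument. First, regularity of $R_c$ gives strong $F$-regularity of $R_c$, which yields a power $c^N$ acting as a test element. Second, a splitting for $c$ at level $e$ can be composed with Frobenius splittings to produce splittings for $c^N$ at larger levels; this uses the fact that existence of a splitting for $c$ implies $F$-purity. Third, arbitrary $d\in R^\circ$ are handled by writing $c^N\in(d)$ up to the test-element property. This step is exactly Glassbrenner's argument, and we would cite \cite{Glassbrenner96} for it.
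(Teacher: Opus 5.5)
Your proposal is correct. The paper gives no proof of this proposition; it only cites \cite{Fedder83} and \cite{Glassbrenner96}. Your outline of (1), and of the fixed-$c$ splitting criterion in (2), is the standard argument from those sources. The key point is that $F^e_*(\m^{[p^e]})=\m\cdot F^e_*S$ inside the free $S$-module $F^e_*S$, so a generator $\Phi_e$ of $\Hom_S(F^e_*S,S)$ satisfies $\Phi_e(F^e_*J)\subseteq\m$ if and only if $J\subseteq\m^{[p^e]}$.

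One step is wrong as written: your third sub-step in (2). One cannot in general arrange $c^N\in(d)$; for example, take $R$ regular, $c=1$ and $d=X_1$. What strong $F$-regularity of $R_c$ actually gives is obtained by clearing denominators in $\Hom_{R_c}(F^{e'}_*R_c,R_c)=\Hom_R(F^{e'}_*R,R)_c$. This yields some $\psi\in\Hom_R(F^{e'}_*R,R)$ with $\psi(F^{e'}_*d)=c^N$. Your second sub-step provides $\theta\colon F^{e_1}_*R\to R$ with $\theta(F^{e_1}_*c^N)=1$. Then $\theta\circ F^{e_1}_*\psi$ sends $F^{e_1+e'}_*d$ to $1$, which is the required splitting for $d$.

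For the ``only if'' direction, you also need $\bar c\in R^{\circ}$ to apply the definition, since the statement only assumes $c\notin I$. This holds because a strongly $F$-regular local ring is a domain.
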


The following theorem relates $F$-purity and strong $F$-regularity to singularities of the minimal model program. 
\begin{theorem}[\cite{hw02}]\label{Fpure=>lc}
Let the notation be as in Definition~\ref{F-pure def}. 
If $(R,\Delta)$ is $F$-pure $($resp.~strongly $F$-regular$)$, then $(X=\Spec R, \Delta)$ is lc $($resp.~klt$)$. 
\end{theorem}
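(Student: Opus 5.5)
We prove the lc case: if $(R,\Delta)$ is $F$-pure, then $(X,\Delta)$ is lc. The klt case follows by the same argument with a perturbation by $c$.

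\textbf{Setup.} Let $\pi\colon Y\to X$ be a projective birational morphism from a normal integral scheme and $E$ a prime divisor on $Y$; we must show $a_E(X,\Delta)\ge -1$. Since the question is local at the generic point of $\pi(E)$, we may localize there and also shrink $Y$ near the generic point of $E$. The statement is local at $\pi(E)$, and $F$-purity localizes, so $R$ may be taken local at that prime.

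\textbf{Reduction to index prime to $p$.} Fix $e\ge1$. Choose $r$ with $r(K_X+\Delta)$ Cartier. Perturbing $\Delta$ is delicate, so instead we use Remark~\ref{F-sing basic}(2)--(3) as follows. The splitting $R\to F^e_*R((p^e-1)\Delta)$ gives a surjection
\[
F^e_*R((1-p^e)(K_X+\Delta))\cong \Hom_R(F^e_*R((p^e-1)\Delta),R)\to R .
\]
This map sends some element $\phi$ to $1$. Thus $\phi$ corresponds to an effective $\Q$-divisor $\Delta_\phi\ge\Delta$ with
\[
(p^e-1)(K_X+\Delta_\phi)\sim 0,
\]
and $\phi$ is surjective. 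So $\Delta_\phi-\Delta$ is effective and $K_X+\Delta_\phi$ is $\Q$-Cartier with index prime to $p$. It suffices to show that $(X,\Delta_\phi)$ is lc, because discrepancies only decrease as the boundary grows.

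\textbf{Key step: extending $\phi$ to $Y$.} The map $\phi\colon F^e_*\sO_X\to\sO_X$ induces, at the generic point of $E$, a map on the function field
\[
\phi_K\colon F^e_*K(X)\to K(X).
\]
By the standard correspondence (Grothendieck trace duality on the regular local ring $\sO_{Y,E}$), $\phi_K$ is identified with
\[
F^e_*\sO_{Y,E}\bigl((1-p^e)(K_Y-\pi^*(K_X+\Delta_\phi))\bigr)\to \sO_{Y,E}.
\]
Write $a=a_E(X,\Delta_\phi)$, so that locally $\phi_K$ corresponds to $F^e_*\sO_{Y,E}((p^e-1)(-a)E)$, i.e.\ the pair $\Delta_Y=-aE$ near $E$. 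Since $\phi(F^e_*R)\ni 1$ and $R\subseteq \sO_{Y,E}$, the map $\phi_K$ sends $F^e_*\sO_{Y,E}$ onto a set containing $1$.

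\textbf{Computation in the DVR.} Suppose $-a>1$. In the DVR $\sO_{Y,E}$ with uniformizer $t$, the maps $F^e_*\sO\to\sO$ are $\Phi\cdot F^e_*(t^{m})$, where $\Phi$ generates the Hom module and $m=(p^e-1)(-a)$. We would have $m\ge p^e$. Then
\[
\Phi\bigl(F^e_*(t^{m}\sO)\bigr)\subseteq t\,\sO,
\]
since $t^{p^e}$ pulls out as $t$. This contradicts $1$ lying in the image. Hence $a\ge -1$, and indeed
\[
a_E(X,\Delta)\ge a_E(X,\Delta_\phi)\ge -1.
\]

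\textbf{klt case.} Suppose $(R,\Delta)$ is strongly $F$-regular. By Remark~\ref{F-sing basic}(4), $(R,\Delta+\varepsilon D)$ is $F$-pure for some effective Cartier divisor $D$ containing $\pi(E)$ and some $\varepsilon>0$. Applying the lc case gives
\[
a_E(X,\Delta)\ge -1+\varepsilon\,\mathrm{ord}_E\pi^*D>-1.
\]

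\textbf{Main obstacle.} The main obstacle is making the identification of $\phi$ with the divisor on $Y$ rigorous, in particular the trace-duality statement on the DVR. This can be done with $F$-finiteness, which holds since $\sO_{Y,E}$ is essentially of finite type over the $F$-finite ring $R$.
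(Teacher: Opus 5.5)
The paper gives no argument for this theorem beyond the citation to \cite{hw02}. Your strategy is the standard one: take a splitting $\phi$, pass to $\Delta_\phi$ with $(p^e-1)(K_X+\Delta_\phi)\sim 0$, extend $\phi$ to the generic point of $E$, and compare orders in the DVR $\sO_{Y,E}$. The DVR computation is correct, and so is the reduction of the klt case to the lc case via Remark~\ref{F-sing basic}(4).

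\textbf{The gap.} The claimed isomorphism, and with it the inequality $\Delta_\phi\ge\Delta$, is wrong in general. With the convention $R(D)=R(\lfloor D\rfloor)$, $F$-purity gives a splitting of $R\to F^e_*R(\lfloor (p^e-1)\Delta\rfloor)$. The relevant module is
\[
\Hom_R\bigl(F^e_*R(\lfloor (p^e-1)\Delta\rfloor),R\bigr)\cong F^e_*R\bigl((1-p^e)K_X-\lfloor (p^e-1)\Delta\rfloor\bigr).
\]
This is in general strictly larger than $F^e_*R((1-p^e)(K_X+\Delta))=F^e_*R((1-p^e)K_X-\lceil (p^e-1)\Delta\rceil)$. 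So a splitting only gives $(p^e-1)\Delta_\phi\ge\lfloor (p^e-1)\Delta\rfloor$, not $\Delta_\phi\ge\Delta$.

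This really happens. Take $p=5$, $R=k[[x,y]]$, $f=x^3+y^2$ and $\Delta=\frac45\operatorname{div}(f)$ (cf.\ Example~\ref{fpt example}). For every $e$, one checks that $f^{4\cdot 5^{e-1}-1}\notin\m^{[5^e]}$ by looking at the monomial with $i=(3\cdot 5^{e-1}-1)/2$ and using Lucas. On the other hand, $f^{4\cdot 5^{e-1}}=(f^4)^{5^{e-1}}\in\m^{[5^e]}$. Hence $(R,\Delta)$ is $F$-pure in the sense of Definition~\ref{F-pure def}, yet for no $e$ does any splitting satisfy $\Delta_\phi\ge\Delta$. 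Your inequality $a_E(X,\Delta)\ge a_E(X,\Delta_\phi)$ is therefore unjustified whenever $(p^e-1)\Delta$ is not integral. This always occurs if some coefficient of $\Delta$ has denominator divisible by $p$.

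\textbf{The repair.} Use the fact that Definition~\ref{F-pure def} provides a splitting $\phi_e$ for \emph{every} $e$. Fix $0\neq g\in R$ with $\operatorname{div}(g)\ge\operatorname{Supp}\Delta$, where the support is taken as a reduced divisor. Since $\lfloor (p^e-1)\Delta\rfloor\ge (p^e-1)\Delta-\operatorname{Supp}\Delta$, the divisor $\Delta_{\phi_e}-\Delta+\frac{1}{p^e-1}\operatorname{div}(g)$ is effective. It is also $\Q$-Cartier, so
\[
a_E(X,\Delta)\ \ge\ a_E(X,\Delta_{\phi_e})-\frac{\operatorname{ord}_E\bigl(\pi^*\operatorname{div}(g)\bigr)}{p^e-1}\ \ge\ -1-\frac{\operatorname{ord}_E\bigl(\pi^*\operatorname{div}(g)\bigr)}{p^e-1},
\]
where the second inequality is your DVR argument applied to $\Delta_{\phi_e}$. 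Letting $e\to\infty$ gives $a_E(X,\Delta)\ge -1$. If $\Delta$ has $\Z_{(p)}$-coefficients, you can instead choose $e$ with $(p^e-1)\Delta$ integral, and your argument then works verbatim. With this correction, the klt case follows exactly as you wrote, applied to the $F$-pure pair $(R,\Delta+\varepsilon D)$.
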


Neither the converse of Theorem~\ref{Finjective=>DB} nor that of Theorem~\ref{Fpure=>lc} holds even in dimension two. 
\begin{example}\label{example:Fsing}
Let $k[[X,Y,Z]]$ denote the three-dimensional formal power series ring over a perfect field $k$ of characteristic $p>0$. 

\textup{(1)}
Let $R=k[[X,Y,Z]]/(X^2+Y^3+Z^5)$. Then $\Spec R$ has klt singularities, but Proposition~\ref{Fedder-Fpure} (2) shows that $R$ is strongly $F$-regular (equivalently, $F$-rational) if and only if $p>5$. 

More generally, suppose that $R$ is a two-dimensional $F$-finite normal local ring of characteristic $p$ and $\Delta$ is an effective $\Q$-Weil divisor on $X=\Spec R$ whose coefficients belong to the standard set $\{1-1/m \mid m \in \Z_{>0}\}$. 
If $p>5$, then it is known by \cite{hara98} and \cite{satotakagi} that  $(X,\Delta)$ is klt if and only if $(R,\Delta)$ is strongly $F$-regular. 

\medskip

\textup{(2)}
Let $R=k[[X,Y,Z]]/(X^3+Y^3+Z^3)$. Then $\Spec R$ has lc singularities, but Proposition~\ref{Fedder-Fpure} (2) shows that $R$ is $F$-pure (equivalently, $F$-injective) if and only if $p \equiv 1 \pmod 3$. More generally, if $R$ is the local ring at the vertex of the affine cone over an elliptic curve $E$ over $k$, then $\Spec R$ has lc singularities, and $R$ is $F$-pure if and only if $E$ is ordinary. 
\end{example}

From the above examples, we see that even in dimension two, $F$-purity and $F$-injectivity are too restrictive compared to log canonicity and being Du Bois, respectively.  
This is one of the motivations for considering quasi-$F$-singularities.  

\begin{definition}[\cite{tw04}]
Let $(R,\m)$ be an $F$-finite normal local ring of characteristic $p>0$ and $\Delta$ be an effective $\Q$-Weil divisor on $X:=\Spec R$. 
Then the $F$-pure threshold $\mathrm{fpt}(R;\Delta)$ of $\Delta$ is defined as 
\[
\fpt(R;\Delta)=\sup\{t \in \Q_{\ge 0} \mid (R,t \Delta) \textup{ is $F$-pure}\}.
\]
If $\Delta$ is a Cartier divisor $\operatorname{div}(f)$, then $\fpt(R;\Delta)$ is also denoted by $\fpt(R;f)$. 
\end{definition}

\begin{remark}
When $X$ is $\Q$-Gorenstein and $\Delta$ is $\Q$-Cartier, Theorem~\ref{Fpure=>lc} says that $\mathrm{fpt}(R;\Delta) \le \mathrm{lct}(X;\Delta)$. 
\end{remark}

When $R$ is regular, the $F$-pure thresholds can be described in a way similar to the Fedder-type criterion (Proposition~\ref{Fedder-Fpure}). 

\begin{proposition}[\cite{MTW}]\label{Fedder-fpt}
Let $R=k[[X_1, \dots, X_n]]$ be the $n$-dimensional formal power series ring over an $F$-finite field of characteristic $p>0$ and $f \in (X_1, \dots, X_n)$ be a nonzero element. 
For every integer $e \ge 1$, set
\[
\nu_e(f):=\max\{r \in \Z_{\ge 0} \mid f^r \notin (X_1^{p^e}, \dots, X_n^{p^e})\}.
\]
Then the sequence $\{\nu_e(f)/p^e\}_{e \ge 1}$ is monotonically increasing and one has 
\[
\fpt(R;f)=\lim_{e \to \infty} \frac{\nu_e(f)}{p^e}=\sup_e \frac{\nu_e(f)}{p^e}. 
\]
\end{proposition}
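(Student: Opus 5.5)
The first step is to translate $F$-purity of $(R,tf)$ into a statement about powers of $f$. By Fedder-type reasoning over the regular ring $R$, the pair $(R,\frac{a}{p^e}\operatorname{div}(f))$ is $F$-pure at level $e$ precisely when the map $R\to F^e_*R$, $1\mapsto F^e_*f^{a}$, splits. Since $F^e_*R$ is free over $R$ with basis given by the monomials $X^{\alpha}$ with $0\le \alpha_i<p^e$, this happens if and only if $f^{a}\notin \m^{[p^e]}:=(X_1^{p^e},\dots,X_n^{p^e})$. Two properties of $\nu_e(f)$ follow from this.

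The first is monotonicity. If $f^r\notin\m^{[p^e]}$, then $f^{rp}\notin \m^{[p^{e+1}]}$, because Frobenius is flat on the regular ring $R$ and hence $(\m^{[p^{e+1}]}:_R f^{rp})=(\m^{[p^e]}:f^r)^{[p]}$ is proper. This gives $\nu_{e+1}(f)\ge p\,\nu_e(f)$, so the sequence $\nu_e(f)/p^e$ increases and its limit equals its supremum. The second is an upper bound. If $f^{r}\in\m^{[p^e]}$, then $f^{rp}\in \m^{[p^{e+1}]}$, so $\nu_{e+1}(f)\le p(\nu_e(f)+1)-1$. Consequently $(\nu_e(f)+1)/p^e$ is non-increasing, and both sequences converge to a common limit $L$.

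Next I compare $L$ with $\fpt(R;f)$ using the splitting criterion of Definition~\ref{F-pure def}(i) together with Remark~\ref{F-sing basic}(2).

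For the lower bound $\fpt\ge L$, take $t<L$ and choose $e_0$ with $t<\nu_{e_0}/p^{e_0}$. For every $e$ I need $R\to F^e_*R$, $1\mapsto F^e_*f^{\lceil t(p^e-1)\rceil}$, to split. Since $\nu_e/p^e\ge \nu_{e_0}/p^{e_0}>t$ for $e\ge e_0$, we have $\lceil t(p^e-1)\rceil\le\nu_e$, and lower powers of $f$ also avoid $\m^{[p^e]}$, so the map splits for $e\ge e_0$. For $e<e_0$, I use the fact that splitting at level $e'=ke$ implies splitting at level $e$, since iterating the level-$e$ map produces a power of $f$ at least as large as the level-$e'$ one. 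This is where the bookkeeping of exponents has to be checked.

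For the upper bound $\fpt\le L$, take $t>L$. Then $t(p^e-1)\ge \nu_e+1$ for $e$ large, because $(\nu_e+1)/p^e\to L<t$. Hence $f^{\lceil t(p^e-1)\rceil}\in\m^{[p^e]}$, the map fails to split, and $(R,tf)$ is not $F$-pure.

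The main obstacle is the exponent bookkeeping between $t(p^e-1)$ and $tp^e$, together with reducing the condition ``for all $e$'' to ``for $e$ large''. I would handle both via the level-$ke$ implies level-$e$ composition argument, applied to $t$ replaced by a slightly larger rational number with denominator $p^{e}$.
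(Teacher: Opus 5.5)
The gap is in the lower bound $\fpt(R;f)\ge L$, at the levels $e<e_0$ that you postpone, and it comes from the exponent you attach to level $e$. The rest is fine: your analysis of $\nu_e(f)$ is correct, since flatness of Frobenius gives $f^r\in\m^{[p^e]}\iff f^{rp}\in\m^{[p^{e+1}]}$ and hence $p\,\nu_e(f)\le\nu_{e+1}(f)\le p(\nu_e(f)+1)-1$. Your argument for $\fpt(R;f)\le L$ is also essentially right.

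In Definition~\ref{F-pure def}(i) the target module is $F^e_*R(\lfloor (p^e-1)\Delta\rfloor)$. So for $\Delta=t\operatorname{div}(f)$ with $f$ reduced, the level-$e$ condition is $f^{\lfloor t(p^e-1)\rfloor}\notin\m^{[p^e]}$, not $f^{\lceil t(p^e-1)\rceil}\notin\m^{[p^e]}$. The ceiling is the ideal-pair convention, where splitting is only demanded for $e\gg0$. With the ceiling and ``for all $e\ge 1$'', the inequality you want is false. For $f=X_1^p$ one has $\nu_e(f)=p^{e-1}-1$, so $L=1/p$. Yet $\lceil t(p-1)\rceil\ge 1$ and $f\in\m^{[p]}$, so level $1$ fails for every $t>0$.

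Your proposed justification also runs backwards. Iterating the level-$e$ map $k$ times yields a power of $f$ at least as large as the level-$ke$ one. A splitting for a larger power of $f$ gives one for every smaller power, so this proves level $e\Rightarrow$ level $ke$, not the converse you need. Two smaller points of the same kind:
\begin{itemize}
\item Your opening translation is off by one: for reduced $f$ and $0<a\le p^e$, the level-$e$ condition for $\frac{a}{p^e}\operatorname{div}(f)$ involves $f^{a-1}$, not $f^{a}$.
\item Remark~\ref{F-sing basic}(2) concerns $\Z_{(p)}$-coefficients, so it does not cover $p$-power denominators.
\end{itemize}

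To repair the argument, work with $b_e:=\lfloor t(p^e-1)\rfloor$ and prove that level $e'$ implies level $e$ whenever $e\le e'$:
\begin{itemize}
\item If $f^{b_e}\in\m^{[p^e]}$, then $f^{p^{e'-e}b_e}\in\m^{[p^{e'}]}$ by Frobenius.
\item $p^{e'-e}b_e$ is an integer at most $t(p^{e'}-p^{e'-e})\le t(p^{e'}-1)$, so $p^{e'-e}b_e\le b_{e'}$ and therefore $f^{b_{e'}}\in\m^{[p^{e'}]}$.
\end{itemize}
This is exactly the inequality that fails for ceilings. Then the two bounds go through:
\begin{itemize}
\item For $t<L$: $b_e\le\lceil t(p^e-1)\rceil\le\nu_e(f)$ for $e\ge e_0$. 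So all large levels split, and hence all levels do.
\item For $t>L$: your estimate $\nu_e(f)+1\le t(p^e-1)$ for $e\gg0$ gives $\nu_e(f)+1\le b_e$, so level $e$ fails.
\end{itemize}

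If $f=u\prod_i g_i^{m_i}$ is not reduced, use $\prod_i g_i^{\lfloor t(p^e-1)m_i\rfloor}$ in place of $f^{b_e}$. Up to units it is divisible by $f^{\lfloor t(p^e-1)\rfloor}$ and divides $f^{\lceil t(p^e-1)\rceil}$, and the same argument applies factor by factor.

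Finally, for non-perfect $k$ the monomials $X^{\alpha}$ alone do not form a basis of $F^e_*R$. You need $F^e_*(\lambda X^{\alpha})$ with $\lambda$ running over a $k^{p^e}$-basis of $k$. The criterion that $1\mapsto F^e_*g$ splits iff $g\notin\m^{[p^e]}$ still holds, because $\m\cdot F^e_*R=F^e_*\m^{[p^e]}$.
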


Using the above description, we can compute the $F$-pure threshold $\fpt(R;f)$ for a relatively simple $f$. 

\begin{example}[\cite{MTW}*{Example~4.3}]\label{fpt example}
Let $R=k[[X,Y]]$ be the two-dimensional formal power series ring over a perfect field $k$ of characteristic $p>0$ and $f=X^3+Y^2 \in R$. 
Then 
\[
\fpt(R;f) =
\begin{cases}
  1/2 & (p = 2), \\[2mm]
  2/3 & (p = 3), \\[2mm]
  (5p - 1)/(6p) & (p \equiv 5 \;(\mathrm{mod}\ 6)), \\[2mm]
  5/6 & (p \equiv 1 \;(\mathrm{mod}\ 6)). 
\end{cases}
\]
\end{example}

%%%%%%%%%%%%%%%%%%%%%%%%%%%%%%%%%%%%%%%%%%%%%%%%%%%%%%%%%%%%%%

\section{Quasi-\texorpdfstring{$F$}{F}-singularities}
In this section, we introduce quasi-$F$-singularities as a generalization of $F$-singularities and discuss their basic definitions and properties. 

\subsection{Definitions}

First, we recall the definition of quasi-$F$-splitting introduced by Yobuko, which is the origin of quasi-$F$-singularities. 
\begin{definition}[\cite{yobuko19}]
For an integer $n \ge 1$, an $F$-finite local domain $A$ of characteristic $p>0$ is said to be \emph{$n$-quasi-$F$-split} if there exists a $W_nA$-module homomorphism $\varphi \colon F_*W_nA \to A$ such that $\varphi \circ F=R^{n-1}$. 
We say that $A$ is \textit{quasi-$F$-split} if $A$ is $n$-quasi-$F$-split for some $n \ge 1$. 

Let $Q_{A,n}$ be the pushout of $F \colon W_nA \to F_*W_nA$ and $R^{n-1}:W_nA \to A$. 
By \cite{TWY}*{Proposition~3.12}, $Q_{A,n}$ is naturally an $A$-module, and hence the map $\Phi_{A,n}: A \to Q_{A,n}$ may be viewed as a map of $A$-modules.
By definition, $A$ is $n$-quasi-$F$-split if and only if the map $\Phi_{A,n}: A \to Q_{A,n}$ splits as an $A$-module homomorphism. 
\[
\xymatrix{
W_n A \ar[r]^F \ar[d]_{R^{n-1}} & F_*W_n A \ar@{.>}[dl]_{\varphi} \ar[d] \\
A \ar[r]^{\Phi_{A,n}} & Q_{A,n}
}
\]
\end{definition}

\begin{definition}\label{def:Q^{e,S}_{X,D,n}}
Let $X$ be an $F$-finite normal scheme of characteristic $p>0$ with function field $\mathscr{K}_X$, and let $D$ be a $\Q$-Weil divisor on $X$. 
\begin{enumerate}
\item[(i)] \textup{(\cite{tanaka22})}
We define the $W_n\mathcal{O}_X$-submodule $W_n\mathcal{O}_X(D)$ of the constant sheaf $W_n\mathscr{K}_X$ as follows. 
For each open subset $U \subseteq X$, we set 
\begin{align*}
\hspace*{3em} \Gamma(U, W_n\mathcal{O}_X(D))
&\coloneqq 
\bigl\{\,(\varphi_0,\ldots,\varphi_{n-1}) 
 \bigm| 
\varphi_i \in \Gamma(U,\mathcal{O}_X(p^iD))
\text{ for all } i \,\bigr\} \\
&\subseteq \Gamma(U, W_n\mathscr{K}_X).
\end{align*}

\item[(ii)]
We define the $W_n\mathcal{O}_X$-module $Q^{e}_{X,D,n}$ and the morphism
\[
\Phi^{e}_{X,D,n} \colon \mathcal{O}_X(D) \longrightarrow Q^{e}_{X,D,n}
\]
by the pushout diagram
\begin{equation}\label{eq:def-Q^{e,S}_{X,D,n}}
\begin{tikzcd}
W_n\mathcal{O}_X(D) \arrow[r,"F^e"] \arrow[d,"R^{n-1}"'] &
F_*^e\bigl(W_n\mathcal{O}_X(p^eD)\bigr) \arrow[d] \\
\mathcal{O}_X(D) \arrow[r,"\Phi^{e}_{X,D,n}"] &
Q^{e}_{X,D,n}.
\end{tikzcd}
\end{equation}
If $D=0$, we write $Q^e_{X,n}$; if $e=1$, we write $Q_{X,D,n}$; and if both $D=0$ and $e=1$, we simply write $Q_{X,n}$. 
When $X=\Spec A$ is affine, with $(A,\m)$ a $d$-dimensional $F$-finite local ring, we write $Q^{e}_{A,D,n}$ in place of $Q^{e}_{X,D,n}$, following the same abbreviation conventions.
The same conventions apply to $\Phi^{e}_{X,D,n}$. 
Moreover, the map induced by applying $H^d_{\m}(-)$ to $\Phi^{e}_{A,D,n}$ is denoted by the same symbol.

\item[(iii)] Assume that $X=\Spec A$ is affine, with $(A,\m)$ a $d$-dimensional $F$-finite local ring. 
For each $c \in A^{\circ}$, we define the $W_nA$-module $Q^e_{A, D, n}(c)$ and the morphism 
\[
\Phi^e_{A,D, n}(c) \colon A(D) \to Q^e_{A, D, n}(c)
\]
by the pushout diagram 
\begin{equation}\label{eq:def-Q^{e}_{A,D,n}(c)}
\begin{tikzcd}
W_nA(D) \arrow[r,"F^e"] \arrow[d,"R^{n-1}"'] &
F_*^e\bigl(W_nA(p^eD)\bigr) \arrow[r,"\cdot F^e_*{[c]}"] &  F_*^e\bigl(W_nA(p^eD)\bigr) \arrow[d]  \\
A(D) \arrow[rr,"\Phi^{e}_{A,D,n}(c)"] & & 
Q^{e}_{A,D,n}(c). 
\end{tikzcd}
\end{equation}
Here $F_*^e\bigl(W_nA(p^eD)\bigr) \xrightarrow{\cdot F^e_*{[c]}} F_*^e\bigl(W_nA(p^eD)\bigr)$ is multiplication by $F^e_*{[c]}$. 
We use abbreviation conventions similar to those in (ii). 
\end{enumerate}
\end{definition}

\begin{remark}\label{Q remark}
Although $Q_{A,n}$ is naturally an $A$-module, $Q^e_{A,n}$ no longer has an $A$-module structure for $e>1$.
\end{remark}

\begin{lemma}\label{quasi-F-split equivalent def}
A $d$-dimensional $F$-finite normal local ring $A$ of characteristic $p>0$ is quasi-$F$-split if and only if, for some integer $n \ge 1$, the map 
\[
\Phi_{A,K_A, n}: H^d_{\m}(\omega_A) \to H^d_{\m}(Q_{A,K_A,n})\]
is injective. 
\end{lemma}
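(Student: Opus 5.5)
The plan is to reduce the statement to Matlis duality (Lemma~\ref{lem:local duality}) applied to the map $\Phi_{A,K_A,n}\colon \omega_A \to Q_{A,K_A,n}$, and to compare $\Hom$ into $W_n\omega_A$ with the splitting of $\Phi_{A,n}$. First, since $A$ is $n$-quasi-$F$-split if and only if $\Phi_{A,n}\colon A\to Q_{A,n}$ splits as a $W_nA$-linear (equivalently, $A$-linear, by \cite{TWY}*{Proposition~3.12}) map, this happens if and only if the evaluation map
\[
\Hom_{W_nA}(Q_{A,n},W_n\omega_A)\to \Hom_{W_nA}(A,W_n\omega_A)
\]
hits a generator, i.e.\ is surjective. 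Here $\Hom_{W_nA}(A,W_n\omega_A)=\omega_A$ (via $R^{n-1}$, the dualizing module of the closed subscheme $\Spec A\subseteq \Spec W_nA$), and a splitting corresponds to $\omega_A$-valued maps. To make this rigorous I would instead use the standard trick: $\Phi_{A,n}$ splits iff $\Hom_A(Q_{A,n},A)\to \Hom_A(A,A)=A$ is surjective, and since $A$ is normal with $\omega_A$ reflexive of rank one, I would reformulate this via the reflexive twist $\Hom_A(Q_{A,n},\omega_A)\to\Hom_A(\omega_A\otimes\cdots)$. Concretely, I would show that $\Hom_{W_nA}(Q_{A,K_A,n},W_n\omega_A)\cong \Hom_{A}(Q_{A,n},A)$ compatibly with $\Phi$, by checking it on the regular locus (where $\omega_A$ is invertible, so $Q_{A,K_A,n}\cong Q_{A,n}\otimes\omega_A$ locally by the projection formula for $W_n\sO(K_A)$ and $F_*W_n\sO(pK_A)$) and extending over codimension $\ge 2$ using that both sides are $S_2$/reflexive.

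Second, apply Lemma~\ref{lem:local duality} with $X=\Spec A$, $\pi=\mathrm{id}$: the Matlis dual of $H^d_\m(\Phi_{A,K_A,n})$ is the completion of
\[
\Hom_{W_nA}(Q_{A,K_A,n},W_n\omega_A)\to \Hom_{W_nA}(\omega_A,W_n\omega_A)\cong \Hom_A(\omega_A,\omega_A)=A .
\]
Since Matlis duality is exact and faithful, injectivity of $H^d_\m(\omega_A)\to H^d_\m(Q_{A,K_A,n})$ is equivalent to surjectivity of this dual map after completion, hence (faithful flatness of completion over the local ring $W_nA$, with the modules finitely generated by $F$-finiteness) to surjectivity before completion. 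Combined with the identification of the first step, this is exactly surjectivity of $\Hom_A(Q_{A,n},A)\to A$, i.e.\ $n$-quasi-$F$-splitting.

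The main obstacle is the identification in the first step: $Q_{A,K_A,n}$ is a pushout involving the non-$A$-linear Witt modules $W_nA(K_A)$ and $F_*W_nA(pK_A)$, and one must verify that $\Hom(-,W_n\omega_A)$ of these behaves like a reflexive twist. I would handle it by showing that all the Hom modules involved are reflexive (they satisfy $S_2$ as duals into $W_n\omega_A$, which is $S_2$), so it suffices to compare on the locus where $K_A$ is Cartier, where $W_nA(K_A)\cong W_nA\cdot[t]^{-1}$-type twists by a Teichmüller lift of a local generator make everything an honest tensor twist of the $D=0$ diagram.
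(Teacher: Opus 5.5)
Your proposal is correct, but it works on the opposite side of duality from the paper.

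\textbf{The paper's route.} It first uses ordinary local duality over $A$, with $\Hom_A(\omega_A,\omega_A)=A$. This turns surjectivity of $\Hom_A(Q_{A,n},A)\to A$ into injectivity of $\Phi_{A,n}\otimes_A\omega_A$ on $H^d_\m$. It then compares $H^d_\m(Q_{A,n}\otimes_A\omega_A)$ with $H^d_\m(Q_{A,K_A,n})$ at the level of local cohomology:
\begin{enumerate}
\item tensor the defining pushout square with $W_nA(K_A)$;
\item apply the right exact functor $H^d_\m$;
\item replace $H^d_\m(F_*W_nA\otimes W_nA(K_A))$ by $H^d_\m(F_*W_nA(pK_A))$, since a map that is an isomorphism in codimension one induces an isomorphism on $H^d_\m$ (kernel and cokernel have dimension at most $d-2$);
\item conclude by uniqueness of pushouts.
\end{enumerate}

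\textbf{Your route.} You apply the Witt-vector duality of Lemma~\ref{lem:local duality} to $\Phi_{A,K_A,n}$ directly. You then prove the Matlis-dual statement $\Hom_{W_nA}(Q_{A,K_A,n},W_n\omega_A)\cong\Hom_A(Q_{A,n},A)$, compatibly with the maps to $A$. You do this via the Teichm\"{u}ller twist $W_n\sO(K_A)=[g]\,W_n\sO$ where $\sO(K_A)=g\sO$, together with extension over codimension two.

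\textbf{Trade-offs.} The paper's argument is lighter. It never needs $W_n\omega_A$, the identification $\Hom_{W_nA}(A,W_n\omega_A)\cong\omega_A$, or the $S_2$ property of $W_n\omega_A$ from \cite{STY}; only $A$-module duality and a dimension count are used. Yours yields a module-level identification of the dual of $\Phi_{A,K_A,n}$ itself, at the cost of those extra inputs.

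\textbf{Making your version airtight.} Define the comparison map globally from the natural multiplication maps
\[
W_nA\otimes W_nA(K_A)\to W_nA(K_A),\qquad F_*W_nA\otimes W_nA(K_A)\to F_*W_nA(pK_A),
\]
together with the fixed isomorphism $\omega_A\cong A(K_A)$. Then check bijectivity only on the regular locus. This way no choice of local generator $g$ has to glue.

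\textbf{Your opening reformulation.} Your first attempt, surjectivity of the evaluation map onto $\Hom_{W_nA}(A,W_n\omega_A)=\omega_A$ (``hitting a generator''), is not the splitting condition, since $\omega_A$ need not be cyclic. You rightly abandon it, and the argument you actually carry out does not depend on it.
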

\begin{proof}
By definition, $A$ is quasi-$F$-split if and only if for some integer $n \ge 1$, the map $\Hom_A(Q_{A,n}, A) \to \Hom_A(A,A)=A$ induced by $\Phi_{A,n}:A \to Q_{A,n}$ is surjective. 
By Matlis duality, this is equivalent to saying that 
\[\Phi_{A,n} \otimes_A \omega_A: H^d_{\m}(\omega_A) \to H^d_{\m}(Q_{A,n} \otimes_A \omega_A)\] is injective. 

We now compare this map with $\Phi_{A,K_A,n}$. 
Applying the right exact functors $-\otimes_{W_nA} W_nA(K_A)$ and $H^d_{\m}(-)$ to the defining pushout diagram of $Q_{A,n}$ gives a pushout diagram. 
In the upper-right term of this pushout diagram, we identify
\[
H^d_{\m}(F_*W_nA\otimes_{W_nA}W_nA(K_A)) \cong H^d_{\m}(F_*W_nA(pK_A))
\]
using the following observation applied to the natural map
\[
F_*W_nA\otimes_{W_nA}W_nA(K_A)\to F_*W_nA(pK_A). 
\]
If a homomorphism $M\to N$ of finitely generated $A$-modules is an isomorphism in codimension one, then it induces an isomorphism $H^d_{\m}(M)\cong H^d_{\m}(N)$. 
Thus we obtain a pushout diagram
\[
\xymatrix{
H^d_{\m}(W_nA(K_A)) \ar[r]^{\hspace*{-1em} F} \ar[d]_{R^{n-1}}
& H^d_{\m}(F_*W_nA(pK_A)) \ar[d]  \\
H^d_{\m}(\omega_A) \ar[r]^{\hspace*{-2em}\Phi_{A,n} \otimes_A \omega_A}
& H^d_{\m}(Q_{A,n} \otimes_A \omega_A).
}
\]

On the other hand, by the definition of $Q_{A,K_A,n}$, applying $H^d_{\m}(-)$ to its defining pushout diagram gives another pushout diagram with the same three remaining terms and the same structure maps, but with lower-right term $H^d_{\m}(Q_{A,K_A,n})$. 
Hence, by the uniqueness of pushouts, we obtain a canonical isomorphism
\[
H^d_{\m}(Q_{A,n}\otimes_A\omega_A) \cong H^d_{\m}(Q_{A,K_A,n}),
\]
under which $\Phi_{A,n}\otimes_A\omega_A$ is identified with
$\Phi_{A,K_A,n}$. 
This completes the proof.
\end{proof}

\begin{setting}\label{notation:qFinj}
Let $(A,\m)$ be a $d$-dimensional $F$-finite normal local ring of characteristic $p>0$. 
Let $\Delta=\sum_i a_i D_i$ be a boundary $\Q$-Weil divisor on $X:=\Spec A$, that is, a $\Q$-Weil divisor on $X$ with $0 \le a_i \le 1$ for all $i$. 
For all integers $e, n \ge 1$, we set
\begin{align*} 
K^{e}_{n} &:= \Ker\bigl( H^d_\m(W_nA) \xrightarrow{F^e} F^e_*H^d_\m(W_nA) \bigr), \\ 
K^{e}_{\infty} &:= \Ker\bigl( \varprojlim_n H^d_\m(W_nA) \xrightarrow{F^e} \varprojlim_n F^e_*H^d_\m(W_nA) \bigr). 
\end{align*}
By the left exactness of inverse limits, we have $K^e_\infty=\varprojlim_n K^e_n$. 
\end{setting}

Motivated by Lemma~\ref{quasi-F-split equivalent def}, one defines quasi-$F$-singularities by replacing the $e$-times iterated Frobenius maps 
\[
A \to F^e_*A \textup{ and } H^d_{\m}(\omega_A) \to H^d_{\m}(F^e_*A(p^eK_A))
\]in the definition of $F$-singularities with the maps 
\[\Phi^e_{A,n}: A \to Q^e_{A,n} \textup{ and } \Phi^e_{A,K_A,n}: H^d_{\m}(\omega_A) \to H^d_{\m}(Q^e_{A,K_A,n}).\] 
Compare the following definition with Definition~\ref{F-inj def} and Definition~\ref{F-pure def}, together with Remark~\ref{F-sing basic}. 
\begin{definition}\label{def:quasi-F-invariants}
With the notation as in Setting~\ref{notation:qFinj}, let $e \ge 1$ be an integer.
\begin{enumerate}
\item[\textup{(i)}] (cf.~\cite{Kaw7})
We say that $A$ is \emph{quasi-$F^e$-injective} if for some integer $n \ge 1$, the map
\[
\Phi^e_{A,n} \colon H^i_\m(A) \longrightarrow H^i_\m(Q^e_{A,n})
\]
is injective for every integer $i$.
When $e = 1$, we simply say that $A$ is \emph{quasi-$F$-injective}.

\item[\textup{(ii)}] (\cite{TWY})
When $\lfloor \Delta \rfloor=0$, we say that $(A,\Delta)$ is \emph{quasi-$F^e$-split} if for some $n \ge 1$, the map
\[
\Phi^{e}_{A,K_A+\Delta,n} \colon 
H^d_\m(\omega_A) \longrightarrow H^d_\m(Q^{e}_{A,K_A+\Delta,n})
\]
is injective. 
This definition can be extended to the case where $\lfloor \Delta \rfloor \ne 0$.
In that setting, the pair is said to be \textit{purely quasi-$F^e$-split}. 
We refer the reader to \cite{TWY}*{Definition 3.34} for the precise definition. 

\item[\textup{(iii)}] (\cite{KTTWYY3})
We say that $A$ is \emph{quasi-$F$-rational} if $A$ is Cohen--Macaulay and if for every $c \in A^{\circ}$, 
there exist integers $e,n \ge 1$ such that the map 
\[
\Phi^e_{A,n}(c) \colon H^d_\m(A) \to H^d_\m(Q^e_{A,n}(c))
\]
is injective.

\item[\textup{(iv)}] (\cite{TWY})
We say that $(A,\Delta)$ is \emph{quasi-$F$-regular} 
if $\lfloor \Delta \rfloor=0$ and if for every $c \in A^{\circ}$, 
there exist integers $e, n \ge 1$ such that the map  
\[
\Phi^e_{A,K_A+\Delta,n}(c) \colon H^d_\m(\omega_A) \to H^d_\m(Q^e_{A,K_A+\Delta,n}(c))
\]
is injective. 
\end{enumerate}
We say that $A$ is quasi-$F^{\infty}$-injective if it is quasi-$F^e$-injective for all $e\ge 1$. 
Similarly, $(A,\Delta)$ is said to be (purely) quasi-$F^{\infty}$-split if it is (purely) quasi-$F^e$-split for all $e\ge 1$. 
\end{definition}

\begin{remark}
By definition, quasi-$F^{\infty}$-splitting means that for every integer $e\ge 1$, there exists an integer $n\ge 1$ such that the ring is quasi-$F^e$-split.
One can also consider \textit{uniform quasi-$F^{\infty}$-splitting}, where the integer $n$ can be chosen independently of $e$.
By \cite{TWY}*{Corollary~7.3}, the affine cone over a supersingular elliptic curve is quasi-$F^{\infty}$-split but not uniformly quasi-$F^{\infty}$-split. 
Since the uniform version is too restrictive for our purposes, we do not pursue it further in this paper.
\end{remark}

\begin{remark}\label{remark:quasi-F-singularities}
Keeping the notation of Setting~\ref{notation:qFinj}, we record two basic properties of quasi-$F$-splitting and quasi-$F$-regularity. 
\begin{enumerate}
\item
For all $e \ge 1$, quasi-$F^{e+1}$-splitting implies quasi-$F^{e}$-splitting. 
In particular, quasi-$F^{\infty}$-splitting implies quasi-$F$-splitting. 
If $A$ is Gorenstein, or if $\dim A=2$, then $A$ is quasi-$F$-split if and only if $A$ is quasi-$F^{\infty}$-split by \cite{KTTWYY3}*{Theorem~H} and \cite{STY}*{Proposition~3.16}, respectively.  
However, Baudin \cite{Bau26} recently informed the authors that these two notions are not equivalent in general. 
\item
$(A,\Delta)$ is quasi-$F$-regular if and only if for every effective Cartier divisor $D$ on $\Spec A$, there exists an integer $e \ge 1$ such that $(A,\Delta + (1/p^e)D)$ is quasi-$F^e$-split. 
This can be viewed as a quasi-$F$ analog of the characterization of strong $F$-regularity in Remark~\ref{F-sing basic}(4).
\end{enumerate}
\end{remark}

\begin{proposition}\label{cor:qFinj-to-qF^einj}
\textup{(cf.~\cite{STY}*{Corollary~3.12}; see also \cite{KTTWYY3}*{Theorem~8.5})}
With the notation in Setting~\ref{notation:qFinj}, assume that $A$ is Cohen--Macaulay.
Then the following conditions are equivalent to each other. 
\begin{enumerate}
\item[\textup{(i)}] $A$ is quasi-$F$-injective. 
\item[\textup{(ii)}] $A$ is quasi-$F^{\infty}$-injective. 
\item[\textup{(iii)}] The map
\[
\varprojlim_{n} H^d_{\m}(W_nA)
\xrightarrow{F}
\varprojlim_{n} F_* H^d_{\m}(W_nA)
\]
is injective.
\item[\textup{(iv)}]  The map
\[
\varprojlim_{n} H^d_{\m}(W_nA)
\xrightarrow{F^e}
\varprojlim_{n} F^e_* H^d_{\m}(W_nA)
\]
is injective for all $e \ge 1$.
\end{enumerate}
\end{proposition}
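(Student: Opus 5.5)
The plan is to prove, for each fixed $e\ge 1$, that $A$ is quasi-$F^e$-injective if and only if $K^e_\infty=0$. This gives (i)$\Leftrightarrow$(iii) by taking $e=1$, and (ii)$\Leftrightarrow$(iv) by letting $e$ vary. The remaining equivalence (iii)$\Leftrightarrow$(iv) is formal. The map $F^e$ on $\varprojlim_n H^d_\m(W_nA)$ is the $e$-fold composite of $F$ as additive maps, so injectivity of $F$ gives injectivity of $F^e$; the converse is the case $e=1$.

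\textbf{Step 1: reduce quasi-$F^e$-injectivity to a condition on $K^e_n$.} Since $A$ is Cohen--Macaulay, $H^i_\m(A)=0$ for $i<d$. So quasi-$F^e$-injectivity only concerns the top map $\Phi^e_{A,n}\colon H^d_\m(A)\to H^d_\m(Q^e_{A,n})$. By Proposition~\ref{Witt CM}, $W_nA$ is Cohen--Macaulay, so the sequence $0\to F_*W_{n-1}A\xrightarrow{V}W_nA\xrightarrow{R^{n-1}}A\to 0$ induces a short exact sequence on $H^d_\m$, because $H^{d-1}_\m(A)=0$. In particular $V$ is injective on $H^d_\m$, and $R^{n-1}$ is surjective on $H^d_\m$ with kernel $V(F_*H^d_\m(W_{n-1}A))$.

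Next, $H^d_\m(-)$ is right exact, since it is top local cohomology. Hence it carries the pushout square defining $Q^e_{A,n}$ to a pushout square. Thus $H^d_\m(Q^e_{A,n})$ is the quotient of $H^d_\m(A)\oplus F^e_*H^d_\m(W_nA)$ by the elements $(R^{n-1}x,-F^ex)$. An element $a\in H^d_\m(A)$ therefore dies if and only if $a=R^{n-1}x$ for some $x$ with $F^ex=0$. This gives
\[
\Ker \Phi^e_{A,n}=I_n:=R^{n-1}(K^e_n)\subseteq H^d_\m(A).
\]
Since $R$ commutes with $F$, we have $R(K^e_{n+1})\subseteq K^e_n$, so the $I_n$ form a descending chain. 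Consequently $A$ is quasi-$F^e$-injective if and only if $I_n=0$ for some $n$.

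\textbf{Step 2: compare with $K^e_\infty=\varprojlim_n K^e_n$.} Each $K^e_n$ is a submodule of the Artinian $W_nA$-module $H^d_\m(W_nA)$, hence Artinian. So the inverse system $\{K^e_n\}$ satisfies the Mittag-Leffler condition. The chain $I_n$ stabilizes to some $I_\infty$, and the image of $\varprojlim_n K^e_n\to K^e_1=H^d_\m(A)\cap K^e_1$ is exactly the stable image $I_\infty$. Hence quasi-$F^e$-injectivity, which means $I_\infty=0$, is equivalent to the vanishing of the first component of every element of $K^e_\infty$.

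It then remains to show that a nonzero element $(x_n)_n\in K^e_\infty$ can be converted into one with nonzero first component. Suppose $x_1=0$. By the exact sequence of Step 1, each $x_n=Vy_{n-1}$, and injectivity of $V$ makes $(y_n)$ a compatible system. Using $F^eV=VF^e$ on Witt vectors together with injectivity of $V$, we get $F^ey_{n-1}=0$, so $(y_n)\in K^e_\infty$. Repeating, and using $(x_n)\ne 0$, after finitely many steps we reach an element of $K^e_\infty$ with nonzero first component. Thus $K^e_\infty=0$ if and only if $I_\infty=0$, which finishes the proof.

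\textbf{Main obstacle.} I expect the delicate point to be this last step. One must check that the $V$-shifts are compatible with the transition maps $R$, i.e.\ that $RV=VR$ on the relevant local cohomology modules. One must also check that $F^e$ commutes with $V$ there; note this is taken on $H^d_\m$ with the twisted module structures $F^e_*$, so some care with the identifications is needed. The Mittag-Leffler argument itself is routine once Artinianness of the $K^e_n$ is noted.
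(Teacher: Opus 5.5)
Your proof is correct. Step 1 and the Mittag--Leffler part of Step 2 reproduce the paper's Claim~\ref{prop:infty-qFinj}: quasi-$F^e$-injectivity holds if and only if the projection $K^e_\infty\to K^e_1$ is zero. The difference lies in how you upgrade ``the projection is zero'' to $K^e_\infty=0$.

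\textbf{The paper's route.} It does this only for $e=1$ and at finite level. It fixes $m$ with $K^1_m\to K^1_1$ zero and proves by induction on $n$ that $K^1_{m+n}\to K^1_{n+1}$ is zero, then passes to the limit. It reaches (ii) only via (i)$\Rightarrow$(iii)$\Rightarrow$(iv)$\Rightarrow$(ii).

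\textbf{Your route.} You work directly in the inverse limit. An element of $K^e_\infty$ with vanishing first component is $V$ of an element of $K^e_\infty$; this uses injectivity of $V$ on $H^d_\m$ together with $RV=VR$ and $F^eV=VF^e$. So if the projection vanishes, $K^e_\infty=V^N(K^e_\infty)$ for every $N$. Elements of $V^N(K^e_\infty)$ have their first $N$ components equal to zero, so $K^e_\infty=0$.

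\textbf{Comparison.} The ingredients are the same: the pushout description, Mittag--Leffler for Artinian systems, the $V$-exact sequence, and commutation of $F$ with $V$ and $R$. Your packaging is a little more economical and handles every $e$ at once, giving (ii)$\Leftrightarrow$(iv) directly. The paper's finite-level induction gives a slightly more quantitative statement, namely that $K^1_{m+n}\to K^1_{n+1}$ vanishes with a single fixed $m$. That is the form it reuses on the resolution $Y$ in Lemma~\ref{qFinj-resolution}(1).

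The points you flag as the main obstacle are harmless. $RV=VR$ and $FV=VF$ hold componentwise on $W_nA$ and pass to $H^d_\m$ by functoriality. The only twist is that the target of $F^e\circ V$ is $F^e_*$ of the target of $V$, so you need injectivity of $F^e_*V$, which is just injectivity of $V$.
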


\begin{proof}
The equivalence of \textup{(i)} and \textup{(ii)} follows from the proof of \cite{KTTWYY3}*{Theorem~8.5}. 
For later use, however, we give a proof that also shows the equivalence with \textup{(iii)} and \textup{(iv)}.

First, note that $W_nA$ is Cohen--Macaulay by Proposition~\ref{Witt CM}, and therefore $H^i_{\m}(W_nA)=0$ for all $i<d$. 
We begin with the following claim. 
\begin{claim}\label{prop:infty-qFinj}
\textup{(cf.~\cite{STY}*{Proposition~3.10})}
For an integer \(e \ge 1\), the ring $A$ is quasi-\(F^e\)-injective if and only if the natural projection
\[
K^{e}_{\infty} \longrightarrow K^{e}_{1}
\]
is the zero map. 
\end{claim}
\begin{proof}[Proof of Claim~\ref{prop:infty-qFinj}]
Since $H^d_{\m}(-)$ is a right exact functor, 
\[
\xymatrix{
H^d_{\m}(W_nA) \ar[r]^{F^e} \ar[d]_{R^{n-1}} &
F^e_*H^d_{\m}(W_nA) \ar[d]  \\
H^d_{\m}(A) \ar[r]^{\Phi^e_{A,n}} &
H^d_{\m}(Q^e_{A,n}) \\
}
\]
is a pushout diagram.
Noting that $R^{n-1}\colon H^d_{\m}(W_nA) \to H^d_{\m}(A)$ is surjective, we may identify 
$\Phi^e_{A,n}$ with the map 
\[
H^d_{\m}(A) \to
F^e_*H^d_{\m}(W_nA)/F^e(\Ker R^{n-1});
\quad
R^{n-1}(x) \mapsto \overline{F^e(x)}.
\]
Under this identification, $A$ is quasi-$F^e$-injective
if and only if for some integer $\ell \ge 1$,
the projection $K^{e}_{\ell} \to K^{e}_{1}$ is the zero map. 
Thus, it suffices to show that the map 
$\varprojlim_{n} K^{e}_{n} \to K^{e}_{1}$ 
is the zero map if and only if the truncation 
$K^{e}_{\ell} \to K^{e}_{1}$
is the zero map for some $\ell \ge 1$. 
The ``if'' part is clear, so it remains to prove the ``only if'' part. 
For each integer $n \ge 1$, set
\[
I_n := \bigcap_{m \ge n}
\Image\bigl( K^{e}_{m} \to K^{e}_{n} \bigr)
\subseteq K^{e}_{n}.
\]
There exists an integer $m_n \ge n$ such that 
$I_n = \Image\bigl( K^{e}_{m_n} \to K^{e}_{n} \bigr)$, because $K^{e}_{n}$ is Artinian.  
It follows that the transition map $I_{n+1} \to I_n$ is surjective for all $n \ge 1$, and hence the map 
$\varprojlim_n I_n \to I_1$
is surjective by the Mittag--Leffler condition for inverse systems (see \cite{stacks-project}*{Tag~0594}). 
If $K^{e}_{\infty} \to K^{e}_{1}$ is the zero map,
then so is $\varprojlim_n I_n \to I_1$, which yields $I_1=0$. 
This completes the proof.  
\end{proof}

The implications (iii) $\Leftrightarrow$ (iv) and (ii) $\Rightarrow$ (i) are clear, and (iii) $\Rightarrow$ (i) and (iv) $\Rightarrow$ (ii) 
follow from Claim~\ref{prop:infty-qFinj}. 
It remains to verify (i) $\Rightarrow$ (iii).
By (i) and the proof of Claim~\ref{prop:infty-qFinj}, there exists an integer $m \ge 1$ such that the map
$K^{1}_m \to K^1_1$ is zero.
We show by induction on $n$ that the map
$K^1_{m+n} \to K^1_{n+1}$
is zero for every integer $n \ge 0$. 

The case $n=0$ is clear from the choice of $m$. Assume that $n \ge 1$. 
Since $A$ is Cohen--Macaulay, we have the following commutative diagram with exact rows:
\[
\xymatrix{
0 \ar[r] & F^2_*H^d_{\m}(W_{m+n-1}A) \ar[r]^{F_*V} & F_*H^d_{\m}(W_{m+n}A) \ar[r]^{\hspace*{2.5em}F_*R^{m+n-1}} & F_*H^d_{\m}(A) \ar[r] & 0 \\
0 \ar[r] & F_*H^d_{\m}(W_{m+n-1}A) \ar[u]_{F_*F} \ar[r]^V \ar[d]^{F_*R^{m-1}} & H^d_{\m}(W_{m+n}A) \ar[u]_F \ar[r]^{\hspace*{2em}R^{m+n-1}} \ar[d]^{R^{m-1}} & H^d_{\m}(A) \ar[u]_F \ar[r] \ar@{=}[d] & 0 \\
0 \ar[r] & F_*H^d_{\m}(W_{n}A) \ar[r]^V & H^d_{\m}(W_{n+1}A) \ar[r]^{R^{n}} & H^d_{\m}(A) \ar[r] &  0 \\
}
\]
Let $\xi \in K^1_{m+n}$. By the choice of $m$, the composition $K^1_{m+n} \to K^1_m \to K^1_1$ is zero, so $R^{m+n-1}(\xi)=0$. 
By the exactness of the middle row, there exists an element $\eta \in H^d_{\m}(W_{m+n-1}A)$ such that $\xi=V(F_*\eta)$. 
Since $\xi \in K^1_{m+n}$, the commutativity of the above diagram gives 
\[
F_*V(F_*(F(\eta)))=F(\xi)=0.  
\]
It follows from the injectivity of $F_*V$ that $F(\eta)=0$, that is, $\eta\in K^1_{m+n-1}$.
By the induction hypothesis, $R^{m-1}(\eta)=0$, and therefore  
\[R^{m-1}(\xi)=R^{m-1}(V(F_*\eta))=V\bigl(F_*(R^{m-1}(\eta))\bigr)=0.\]
Thus the map $K^1_{m+n} \to K^1_{n+1}$ is zero.
Passing to the inverse limit, we obtain an isomorphism 
\[
K^1_\infty
= \varprojlim_m K^1_{n+m}
\xrightarrow{\simeq}
\varprojlim_m K^1_{m+1}
= K^1_\infty,
\]
which is zero. 
Hence, $K^1_\infty = 0$, that is, (iii) holds. 
\end{proof}

\subsection{Deformations of quasi-$F$-singularities}
The deformation behavior of quasi-$F$-singularities is more subtle than that of $F$-singularities.
As we will see in Example~\ref{example:fedder-qFs} and Remark~\ref{example-inv-adj}, quasi-$F$-splitting, quasi-$F$-regularity, quasi-$F$-rationality, and quasi-$F$-injectivity do not deform in general, even for Gorenstein rings. 
By contrast, we prove in this paper that quasi-$F$-rationality deforms for quasi-$F$-injective singularities.

\begin{remark}
There are also positive results for global deformation and inversion of adjunction under certain global assumptions (see \cite{KTTWYY1}*{Theorem~4.6 and Corollary~4.12}). 
These results include the cases of plt blow-ups and Koll\'ar components, which are used in the proof of Theorem~\ref{thm:quasi-F-split of 3-dim klt}. 
\end{remark}

First, we recall the notation used in the theory of quasi-$F$-rationality developed in \cite{KTTWYY3}.

\begin{definition}[cf.~\cite{KTTWYY3}*{Section~3}]\label{def:test-submod}
Let $(A,\m)$ be a $d$-dimensional $F$-finite normal local ring of characteristic $p>0$.
Fix integers $n \ge 1$ and $e\ge 0$.
\begin{enumerate}
\item[(i)]
For each $c \in A^{\circ}$, set 
\[
\widetilde{K^{e,c}_n}
:=
\Ker\left(
H^d_\m(W_nA)
\xrightarrow{F^e}
H^d_\m(F^e_*W_nA)
\xrightarrow{\cdot F^e_*[c]}
H^d_\m(F^e_*W_nA)
\right),
\]
which is a $W_nA$-submodule of $H^d_\m(W_nA)$. 
We then define $\widetilde{0_{A,n}^{*}}$ to be the $W_n A$-submodule of $H^d_\m(W_nA)$ consisting of all elements $z \in H^d_\m(W_nA)$ for which there exists $c \in A^{\circ}$ such that $z$ lies in $\widetilde{K^{e,c}_n}$ for all large $e$. 

\item[(ii)]
For each $c \in A^{\circ}$, set
\[
K^{e,c}_n:=\Ker\left(\Phi^e_{A,n}(c) \colon H^d_\m(A) \to H^d_\m(Q^e_{A,n}(c))\right),
\]
which is an $A$-submodule of $H^d_\m(A)$. 
The {\em $n$-quasi-tight closure} $0_{A,n}^{*}$ is then defined to be the $A$-submodule of $H^d_\m(A)$ consisting of all elements $z \in H^d_\m(A)$ for which there exists $c \in A^{\circ}$ such that $z$ lies in $K^{e,c}_n$ for all large $e$. Note by \cite{KTTWYY3}*{Theorem~3.25} that $R^{n-1}(\widetilde{0_{A, n}^{*}})=0_{A, n}^{*}$. 

\item[(iii)]
The {\em quasi-tight closure} $0_A^{q*}$ is the $A$-submodule of $H^d_\m(A)$ defined by 
\[
0_A^{q*}:=\bigcap_{n=1}^{\infty} 0_{A, n}^{*}. 
\]
The restriction maps $W_{n+1}A\to W_nA$ induces, by the universal property of pushouts, maps $Q^e_{A,n+1}(c)\to Q^e_{A,n}(c)$ 
compatible with the maps from $A$. 
Hence $K^{e,c}_{n+1}\subseteq K^{e,c}_n$, and so $0^*_{A,n+1}\subseteq 0^*_{A,n}$. 
Since $H^d_\m(A)$ is Artinian, this descending chain stabilizes. Thus, there exists an integer $m \ge 1$ such that $0_{A, m}^{*}=0_A^{q*}$. 
Note that $A$ is quasi-$F$-rational if and only if $0_A^{q*}=0$ and $A$ is Cohen--Macaulay. 

\item[(iv)]
We define submodules of $\omega_A$ by taking annihilators with respect to the duality pairing $\omega_A \times H^d_{\m}(A) \to H^d_{\m}(\omega_A)$: 
\[
\tau_n(\omega_A)
:=
\operatorname{Ann}_{\omega_A}(0_{A, n}^{*}),
\quad
\tau^q(\omega_A)
:=
\operatorname{Ann}_{\omega_A}(0_A^{q*}). 
\]
Then $\{\tau_n(\omega_A)\}_{n \ge 1}$ forms an ascending chain of submodules of $\omega_A$, and therefore there exists an integer $m \ge 1$ such that
$\tau_m(\omega_A)=\tau^q(\omega_A)$.
Note that $A$ is quasi-$F$-rational if and only if $A$ is Cohen--Macaulay and $\tau^q(\omega_A)=\omega_A$.
Furthermore, by \cite{KTTWYY3}*{Theorem~3.10},
the formation of $\tau_n(\omega_A)$ and $\tau^q(\omega_A)$ commutes with localization and completion.
\end{enumerate}
\end{definition}

\begin{theorem}\label{inv-adj}
With the notation as in Setting~\ref{notation:qFinj}, let $f \in A$ be a nonzero element. 
If $A$ is quasi-$F$-injective and $A/(f)$ is quasi-$F$-rational, then $A$ is quasi-$F$-rational.    
\end{theorem}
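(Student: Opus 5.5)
We follow the classical proof that $F$-rationality deforms, replacing Frobenius on $H^d_\m(A)$ by the Witt-vector Frobenius on $\varprojlim_n H^d_\m(W_nA)$, which is injective by Proposition~\ref{cor:qFinj-to-qF^einj}.

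\textbf{Step 1: reductions and Cohen--Macaulayness.} Since $A/(f)$ is quasi-$F$-rational, it is Cohen--Macaulay and (by \cite{KTTWYY3}) normal. Because $A$ is a normal domain and $f \neq 0$, $f$ is a nonzerodivisor, so $A$ is Cohen--Macaulay. Proposition~\ref{cor:qFinj-to-qF^einj} then applies and gives that
\[
\varprojlim_n H^d_\m(W_nA) \xrightarrow{F^e} \varprojlim_n F^e_*H^d_\m(W_nA)
\]
is injective for all $e$. It remains to show $0_A^{q*}=0$. Suppose instead $0 \neq z \in 0^{q*}_A$.

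\textbf{Step 2: pass to the socle and to $A/(f)$.} We have the exact sequence
\[
0 \to H^{d-1}_\m(A/f) \xrightarrow{\delta} H^d_\m(A) \xrightarrow{f} H^d_\m(A) \to 0.
\]
Since $0^{q*}_A$ is a nonzero submodule of an Artinian module, it meets the socle, and $f$ kills the socle. So we may take $z=\delta(y)$ with $y \neq 0$. We then aim to show $y \in 0^{q*}_{A/f}$, which contradicts the quasi-$F$-rationality of $A/(f)$.

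\textbf{Step 3: lift to Witt vectors and compare test elements.} Choose a test element $c \in A^\circ$ whose image $\bar c$ lies in $(A/f)^\circ$; this is possible by prime avoidance together with the fact that $0^*$ is independent of suitable $c$ (\cite{KTTWYY3}*{Section~3}). Work with the lifts $\widetilde{0^*_{A,n}}$, using $R^{n-1}(\widetilde{0^*_{A,n}})=0^*_{A,n}$. Lifting $z$ to $\tilde z \in H^d_\m(W_nA)$, the condition $F^e_*[c]F^e(\tilde z)=0$ modulo $F^e(\Ker R^{n-1})$ must be transported to $W_n(A/f)$. For this we use the Witt-vector sequence
\[
0 \to W_nA \xrightarrow{\cdot [f]} W_nA \to W_n(A/f) \to 0,
\]
which is exact because $[f]$ acts by $(f,f^p,\dots)$ componentwise. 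Its connecting map $H^{d-1}_\m(W_n(A/f)) \to H^d_\m(W_nA)$ is compatible with Frobenius, provided $[f]$ is twisted to $[f]^{p^e}$ as in the classical argument. The key point is to use the quasi-$F$-injectivity of $A$, in its inverse-limit form, to control the obstruction term $F^e(\Ker R^{n-1})$. Concretely, we would choose $y$ so that it lifts compatibly to a class in $\varprojlim_n H^{d-1}_\m(W_n(A/f))$, using the Mittag--Leffler argument from Claim~\ref{prop:infty-qFinj}. Injectivity of $F$ on $\varprojlim_n H^d_\m(W_nA)$ then lets us divide out the extra factor $[f]^{p^e-1}$ that appears, exactly as $F$-injectivity is used in Fedder's deformation of $F$-injectivity.

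\textbf{Main obstacle.} The hardest step is Step 3. The modules $Q^e_{A,n}(c)$ are pushouts that are not $A$-modules for $e>1$ (Remark~\ref{Q remark}), so the classical diagram chase has to be carried out on $H^d_\m(W_nA)$ modulo $F^e(\Ker R^{n-1})$. One must also show that the connecting map sends $K^{e,\bar c}$-type kernels for $A/f$ onto those for $A$, uniformly in $n$ with only a bounded shift of levels. I would first try to prove the auxiliary statement that, for $A$ quasi-$F$-injective, $\widetilde{0^*_{A,n}}\cap \Ker(f)$ is contained in $\delta\bigl(\widetilde{0^*_{A/f,n-m}}\bigr)$ for a fixed $m$, and then intersect over $n$.
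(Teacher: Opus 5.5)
There is a genuine gap, and it sits exactly where you locate the ``main obstacle''. The role you assign to quasi-$F$-injectivity is also not the one it actually plays.

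To run the connecting-map argument at Witt level $n$, you need a lift $\tilde z\in\widetilde{0^*_{A,n}}$ of $z$ (one with $F^e_*[c]F^e(\tilde z)=0$ for all large $e$) that satisfies $[f]\tilde z=0$. Only then is $\tilde z=\sigma_n(\tilde y)$ for the connecting map $\sigma_n\colon H^{d-1}_\m(W_nA/([f]))\to H^d_\m(W_nA)$. Choosing $z$ in the socle only gives $fz=0$ in $H^d_\m(A)$, i.e.\ $[f]\tilde z\in\Ker R^{n-1}=\Image V$. Writing $[f]\tilde z=V(F_*\beta)$ produces a new element $\beta$ over which you have no control, so nothing forces $[f]\tilde z=0$, and Mittag--Leffler alone does not produce such a lift.

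The factor $[f^{p^e-1}]$ from the twisted connecting map is harmless: $F^e_*\sigma_n(F^e_*[c]F^e(\tilde y))=F^e_*[cf^{p^e-1}]F^e(\sigma_n(\tilde y))$. Vanishing on the right therefore passes to $\tilde y$ by injectivity of $F^e_*\sigma_n$; this is Claim~\ref{cl:inv2}. There is nothing to divide out and no $F$-injectivity is used there. Likewise, in Fedder's argument it is the $F$-injectivity of $A/(f)$, not of $A$, that matters.

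Two further points. First, $0\to W_nA\xrightarrow{[f]}W_nA\to W_n(A/f)\to 0$ is not exact for $n\ge 2$. Indeed $V[f]=(0,f,0,\dots)$ lies in $\Ker(W_nA\to W_n(A/f))$ but not in $[f]W_nA=\{(fb_0,f^pb_1,\dots)\}$. You must use $W_nA/([f])$ with its natural map to $W_n(A/(f))$, through which the restriction to $A/(f)$ factors. Second, you do not justify the existence of test elements compatible with $A^{\circ,f}$, and you do not need them. Since $(f)$ is prime, one can write $c=f^sc'$ with $c'\in A^{\circ,f}$ and absorb $f^s$ into $f^{p^e-1}$.

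The missing idea is a \emph{uniform} annihilation statement, and this is where quasi-$F$-injectivity enters.
\begin{enumerate}
\item Fix $h$ with $0^*_{A/(f),h}=0$ and $0^*_{A,n}=0^*_{A,h}$ for all $n\ge h$.
\item The formation of $\tau_h(\omega_A)$ commutes with localization, so induction on $\dim A$ reduces to the case where $\omega_A/\tau_h(\omega_A)$ has finite length. Then $f^r0^*_{A,h}=0$ for some $r$.
\item An induction along the Witt tower, using $V^{-1}(\widetilde{0^*_{A,n+1}})=F_*\widetilde{0^*_{A,n}}$ (Proposition~3.20 of \cite{KTTWYY3}), shows that $[f^{2r}]$ kills $\widetilde{0^*_{A,\infty}}:=\varprojlim_n\widetilde{0^*_{A,n}}$.
\item Quasi-$F$-injectivity gives injectivity of $F^r$ on $\varprojlim_nH^d_\m(W_nA)$ (Proposition~\ref{cor:qFinj-to-qF^einj}). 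Combined with $F^r([f]x)=[f^{p^r}]F^r(x)$, $p^r\ge 2r$ and Frobenius stability of $\widetilde{0^*_{A,\infty}}$, this yields $[f]\widetilde{0^*_{A,\infty}}=0$.
\item Mittag--Leffler lifts every $\alpha\in0^*_{A,h}$ to some $\alpha_\infty\in\widetilde{0^*_{A,\infty}}$. Its level-$h$ component is then killed by $[f]$.
\item Only now does the connecting-map argument apply. It maps into $\widetilde{0^*_{A/(f),h}}$, whose image under $R^{h-1}$ is $0^*_{A/(f),h}=0$, which finishes the proof.
\end{enumerate}

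Your proposed auxiliary statement about $\widetilde{0^*_{A,n}}\cap\Ker(f)$ does not address this. Read with $\Ker[f]$, it is the easy part and needs no quasi-$F$-injectivity. The real difficulty is showing that elements of $0^*_{A,h}$ have lifts in $\Ker[f]$ at all.
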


\begin{proof}
First, note that $A$ is Cohen--Macaulay, as is $A/(f)$. 
By the quasi-$F$-rationality of $A/(f)$, there exists an integer $h \ge 1$ such that $\tau_h(\omega_{A/(f)})=\omega_{A/(f)}$, equivalently $0^*_{A/(f),h}=0$. 
Increasing $h$ if necessary, we may also assume that $\tau_h(\omega_A)=\tau^q(\omega_{A})$. 
We will show that $\tau_h(\omega_{A})=\omega_A$. 
Since the formation of $\tau_h(\omega_A)$ commutes with localization, by induction on the dimension of $A$, we may assume that $\omega_A/\tau_h(\omega_A)$ has finite length. 

Let $A^{\circ, f}$ denote the set of elements of $A$ not in any minimal prime ideal of the principal ideal $(f)$. 
We then define $\wt{0^*_{A,f,h}}$ to be the $W_hA$-submodule of $H^{d-1}_\m(W_hA/([f]))$ consisting of all elements $z \in H^{d-1}_\m(W_hA/([f]))$ for which there exists $c \in A^{\circ, f}$ such that $F^e_*[c] F^e(z)=0$ for all large $e$. 
Let 
\[R^{h-1}_f \colon H^{d-1}_\m(W_hA/([f])) \to H^{d-1}_\m(A/(f))\] 
be the natural map induced by the restriction map. 

\begin{claim}\label{cl:inv1}
$R^{h-1}_f(\wt{0^*_{A,f,h}}) = 0$. 
\end{claim}
\begin{proof}[Proof of Claim~\ref{cl:inv1}]
The canonical surjection $A \to A/(f)$ induces a ring homomorphism $W_hA \to W_h(A/(f))$ that sends the ideal $([f])$ to zero. Therefore, we have a natural map 
\[
\iota_h \colon H^{d-1}_\m(W_hA/([f])) \longrightarrow H^{d-1}_\m(W_h(A/(f))). 
\]
By definition, $\iota_h(\wt{0^*_{A,f,h}}) \subseteq \wt{0^*_{A/(f),h}}$.
Noting that 
\[
R^{h-1}_f
= \bigl(
H^{d-1}_\m(W_h(A)/([f]))
\xrightarrow{\iota_h}
H^{d-1}_\m(W_h(A/(f)))
\xrightarrow{R^{h-1}}
H^{d-1}_\m(A/(f))
\bigr),
\]
we have 
\[
R^{h-1}_f(\wt{0^*_{A,f,h}}) \subseteq R^{h-1}(\wt{0^*_{A/(f),h}})=0^*_{A/(f),h}=0. 
\] 
\end{proof}

Next, we define $\wt{0^{*_f}_{A, h}}$ to be the $W_hA$-submodule of $H^{d}_\m(W_hA)$ consisting of all elements $\alpha \in H^{d}_\m(W_hA)$ for which there exists $c \in A^{\circ, f}$ such that $F^e_*[cf^{p^e-1}] F^e(\alpha)=0$ for all large $e$. 
\begin{claim}\label{cl:inv2}
If $\alpha \in \wt{0^{*_f}_{A,h}}$ and $[f]\alpha=0$, then $R^{h-1}(\alpha) = 0$.
\end{claim}

\begin{proof}[Proof of Claim~\ref{cl:inv2}]
By definition, there exists an element $c \in A^{\circ, f}$ such that $F^e_*[cf^{p^e-1}]F^e(\alpha)=0$ for all large $e$. 
Since $W_hA$ is Cohen--Macaulay by Proposition~\ref{Witt CM}, we have the following commutative diagram with exact rows:
\[
\begin{tikzcd}[column sep=2.25em]
0 \arrow[r] 
& H^{d-1}_\m(W_hA/([f])) \arrow[r,"\sigma_h"] \arrow[d, "F^e_*{[c]}F^e"] 
& H^d_\m(W_hA) \arrow[r, "\cdot {[f]}"] \arrow[d, "F^e_*{[cf^{p^e-1}]}F^e"] 
& H^d_\m(W_hA) \arrow[d, "F^e_*{[c]}F^e"] \arrow[r] & 0 \\
0 \arrow[r] 
& H^{d-1}_\m(F^e_*(W_hA/([f]))) \arrow[r,"F^e_*\sigma_h"] 
& H^d_\m(F^e_*W_hA) \arrow[r,"\cdot F^e_*{[f]}"]  
& H^d_\m(F^e_*W_hA) \arrow[r] & 0.
\end{tikzcd}
\]
By the assumption that $[f]\alpha=0$, there exists an element $\beta \in H^{d-1}_\m(W_hA/([f]))$ such that $\sigma_h(\beta) = \alpha$. 
The commutativity of the above diagram gives
\[
F^e_*\sigma_h(F^e_*[c]F^e(\beta))
= F^e_*[cf^{p^e-1}]F^e(\sigma_h(\beta))
= F^e_*[cf^{p^e-1}]F^e(\alpha)
= 0
\]
for all large $e$. 
It follows from the injectivity of $F^e_*\sigma_h$ that $\beta \in \wt{0^*_{A,f,h}}$.
Let $\sigma_1 \colon H^{d-1}_\m(A/(f)) \to H^d_\m(A)$ denote the connecting homomorphism induced by $0\to A \xrightarrow{\cdot f} A \to A/(f)\to 0$.
By Claim~\ref{cl:inv1} and the compatibility of connecting homomorphisms with restriction maps, we conclude that 
\[
R^{h-1}(\alpha)
=R^{h-1}(\sigma_h(\beta))
=\sigma_1(R^{h-1}_f(\beta))
=0.  
\]
\end{proof}

Since $\omega_A/\tau_h(\omega_A)$ has finite length,
there exists an integer $r \ge 1$ such that $f^r \omega_A \subseteq \tau_h(\omega_A)$, 
which implies that $f^r 0^*_{A, h} = 0$. 
\begin{claim}\label{cl:inv3}
$R^{h-1}([f^{2r}]\wt{0^*_{A, h+m}}) = 0$ for all integers $m \ge 0$. 
\end{claim}
\begin{proof}[Proof of Claim~\ref{cl:inv3}]
We prove this by induction on $m$. 
The case $m=0$ is immediate, because $R^{h-1}([f^{2r}]\wt{0^*_{A, h}})=f^{2r}0^*_{A, h}=0$. 
Suppose that $m \ge 1$, and let $\alpha \in \wt{0^*_{A, h+m}}$.
Then $R^{h+m-1}([f^r]\alpha) \in f^r 0^*_{A, h}=0$, so 
there exists $\beta \in H^d_\m(W_{h+m-1}A)$ such that 
$V(F_*\beta) = [f^r]\alpha$.
It follows from \cite{KTTWYY3}*{Proposition~3.20} that 
$F_*\beta \in V^{-1}(\wt{0^*_{A, h+m}})=F_*\wt{0^*_{A, h+m-1}}$, and hence $\beta \in \wt{0^*_{A, h+m-1}}$. 
The induction hypothesis yields 
$R^{h-1}([f^{2r}]\beta) = 0$, and in particular $R^{h-1}([f^{pr}]\beta)=0$. 
Consequently, 
\[
R^{h-1}([f^{2r}]\alpha)=R^{h-1}([f^{r}]V(F_*\beta))
=V(F_*(R^{h-1}([f^{pr}]\beta)))
= 0.
\]
\end{proof}

By \cite{KTTWYY3}*{Proposition~3.20} again, we have $R(\wt{0^*_{A, n+1}}) \subseteq \wt{0^*_{A, n}}$. 
Thus $\{\wt{0^*_{A,n}}\}_{n \ge 1}$ forms an inverse system, and we define
\[
\wt{0^*_{A,\infty}}:=\varprojlim_{n \geq 1} \wt{0^*_{A,n}} \subseteq \varprojlim_{n \geq 1} H^d_\m(W_nA). 
\]
It follows from Claim~\ref{cl:inv3} that $[f^{2r}]\wt{0^*_{A,\infty}} = 0$.  
Moreover, since $A$ is Cohen--Macaulay and quasi-$F$-injective, Proposition~\ref{cor:qFinj-to-qF^einj} shows that 
the Frobenius-induced map
\[
F \colon \varprojlim_{n \geq 1} H^d_\m(W_nA) \to \varprojlim_{n \geq 1} F_*H^d_\m(W_nA)
\]
is injective.
Noting that $p^r \ge 2r$, we have $F^{r}([f]\wt{0^*_{A, \infty}}) \subseteq [f^{2r}]\wt{0^*_{A, \infty}} = 0$. 
It follows from the injectivity of $F^r$ that $[f]\wt{0^*_{A,\infty}} = 0$. 

Finally, we show that $\tau_h(\omega_{A})=\omega_A$, equivalently $0^*_{A,h}=0$. 
Let $\alpha \in 0^*_{A,h}$. 
We have $ 0^*_{A,h} = 0^*_{A,n}$ for every integer $n \geq h$, because $\tau_h(\omega_A) = \tau^q(\omega_A)$. 
Thus, for each $n \ge h$, the restriction map induces a surjection 
\[
R^{n-1} \colon \wt{0^*_{A,n}} \twoheadrightarrow 0^*_{A,h}. 
\]
Since the kernel of this map is Artinian, the Mittag--Leffler condition for inverse systems (see \cite{stacks-project}*{Tag~0594}) yields a surjection 
\[
\wt{0^*_{A,\infty}}=\varprojlim_{n \geq 1} \wt{0^*_{A,n}} \twoheadrightarrow 0^*_{A,h}. 
\]
In particular, there exists an element $\alpha_\infty \in \wt{0^*_{A,\infty}}$ mapping to $\alpha$. 
We have shown above that $[f] \wt{0^*_{A,\infty}} = 0$, and hence  $[f]\alpha_\infty=0$. 
Let $\alpha_h$ be the image of $\alpha_{\infty}$ under the truncation map 
\[
\varprojlim_{n \geq 1} H^d_\m(W_nA) \to H^d_\m(W_hA).
\]
Then $[f]\alpha_h = 0$ and $\alpha_h \in \wt{0^*_{A,h}}$. 
By the definition of $\wt{0^*_{A,h}}$, there exists $c \in A^{\circ}$ such that $\alpha_h \in \wt{K^{e,c}_h}$ for all large $e$. 
Since $A/(f)$ is quasi-$F$-rational, $(f)$ is a prime ideal, and we can write $c=f^s c'$ with $s \ge 0$ and $c'\in A^{\circ,f}$. 
Then, for sufficiently large $e$, 
\[
F^e_*[c'f^{p^e-1}] F^e(\alpha_h)=F^e_*[cf^{p^e-s-1}] F^e(\alpha_h)=0.
\] 
Thus $\alpha_h\in \wt{0^{*_f}_{A,h}}$. 
Together with $[f]\alpha_h=0$, Claim~\ref{cl:inv2} gives 
$\alpha = R^{h-1}(\alpha_h) = 0$. 
\end{proof}

\section{Quasi-\texorpdfstring{$F$}{F}-singularities vs.~singularities in birational geometry}
In this section, we discuss the relationship between quasi-$F$-singularities and singularities in birational geometry. 

\subsection{From quasi-$F$-singularities to singularities arising in birational geometry}

The following theorem relates quasi-$F$-singularities to singularities in birational geometry, in parallel with the corresponding results for classical $F$-singularities. 
It generalizes Theorems~\ref{Finjective=>DB} and~\ref{Fpure=>lc}.

\begin{theorem}\label{qFreg-to-klt}
With the notation as in Setting~\ref{notation:qFinj}, assume that $K_A+\Delta$ is $\Q$-Cartier. 
\begin{enumerate}
    \item[(1)] $($\cite{KTTWYY3}*{Theorem~A}$)$ If $(A,\Delta)$ is quasi-$F$-regular, then the pair $(\Spec A,\Delta)$ is klt.
    \item[(2)] $($\cite{STY}*{Theorem~A}$)$ If $(A,\Delta)$ is purely quasi-$F^{\infty}$-split, then the pair $(\Spec A,\Delta)$ is lc. 
    \item[(3)] $($\cite{KTTWYY3}*{Corollary~3.15}$)$ If $A$ is quasi-$F$-rational, then $\Spec A$ has pseudo-rational singularities. 
\end{enumerate}
\end{theorem}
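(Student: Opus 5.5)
Theorem~\ref{qFreg-to-klt} collects three results that are proved in the cited papers. The plan is to explain the common mechanism and how each part reduces to its classical counterpart (Theorems~\ref{Fpure=>lc} and~\ref{Finjective=>DB}), rather than to reprove everything from scratch. The common strategy is contrapositive and uses Matlis duality (Lemma~\ref{lem:local duality}). Take a projective birational morphism $\pi\colon Y\to X=\Spec A$ that witnesses the failure of the birational property. Concretely, this is a prime divisor $E$ with $a_E(X,\Delta)\le -1$ (resp.\ $<-1$), or the strict inclusion $\pi_*\omega_Y\subsetneq\omega_X$. From $\pi$ we produce a nonzero class in $H^d_\m$ of the relevant module that is killed by $\Phi^e_{A,\ast,n}$ for every $n$ (and $e$). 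Dually, we show that the corresponding Witt-vector splitting maps factor through $\pi_*$ of a sheaf on $Y$ that is strictly smaller than the target.

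For (3), the cleanest route uses the test-submodule language of Definition~\ref{def:test-submod}. We will show $\tau^q(\omega_A)\subseteq\pi_*\omega_Y$ for every such $\pi$. Then quasi-$F$-rationality, which means $\tau^q(\omega_A)=\omega_A$ together with Cohen--Macaulayness, forces $\pi_*\omega_Y=\omega_A$, i.e.\ pseudo-rationality. Equivalently, we show that the kernel of $H^d_\m(A)\to H^d_\m(R\pi_*\sO_Y)$ lies in $0^*_{A,n}$ for all $n$. Take $z$ in this kernel and lift it to $\tilde z\in H^d_\m(W_nA)$. Choose $c\in A^\circ$ so that $[c]$ annihilates the relevant cokernels of $W_nA\to R\pi_*W_n\sO_Y$; such a $c$ exists since $\pi$ is an isomorphism over an open set, and then $[c]$ also annihilates $F^e_*$ of these cokernels uniformly in $e$. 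Then $[c]F^e(\tilde z)$ factors through $H^d_\m(F^e_*R\pi_*W_n\sO_Y)$. Since $H^d_\m(R\pi_*W_n\sO_Y)$ is built from the $H^d_\m(R\pi_*\sO_Y)$ via the $V$/$R$ exact sequences, the class of $\tilde z$ can be adjusted by $V$-terms killed in $Q^e_{A,n}(c)$. This puts $z$ in $K^{e,c}_n$ for all $e$.

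For (1) and (2), we follow the proof of Theorem~\ref{Fpure=>lc} in the form of Remark~\ref{remark:quasi-F-singularities}(2), which reduces (1) to (2) with the perturbation $\Delta+\tfrac1{p^e}D$. For (2), suppose $a_E(X,\Delta)<-1$ and pass to the local ring at the generic point of $E$ on a suitable $Y$. We use that quasi-$F^e$-splitting of $(A,\Delta)$ induces a splitting of $\sO_Y(\lfloor\pi^*\text{-type divisor}\rfloor)\to Q^e_{Y,\ast,n}$ compatible with $\pi$. Here the key input is a Witt-vector version of pulling back splittings along birational maps, controlled through the divisors $W_n\sO_X(D)$ of Definition~\ref{def:Q^{e,S}_{X,D,n}}. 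At the DVR $\sO_{Y,E}$ with boundary coefficient $>1$, a direct computation of $Q^e_{n}$ shows that no such splitting exists for large $e$.

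We expect this last computation to be the main obstacle: $Q^e_{A,K_A+\Delta,n}$ mixes the $p^i$-scaled divisors of all Witt components. The reason (2) needs quasi-$F^{\infty}$ rather than mere quasi-$F$-splitting is that $e\to\infty$ is required to make the coefficient $p^e(a-1)$ of the bad divisor outgrow the bounded contribution of the $n$ Witt components. We would first try to fix $n$ from the splitting at each $e$ and estimate valuations on $W_n$ of the DVR componentwise, using the Teichmüller multiplication formula.
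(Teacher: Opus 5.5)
The paper does not prove this theorem; all three parts are quoted from the cited works. Your first sentence is therefore the paper's entire argument, and the question is whether your sketches hold up.

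\textbf{Part (3).} Your sketch holds up, and it is the natural argument. The ingredients are:
\begin{itemize}
\item $\pi_*\omega_Y$ is the annihilator of $\Ker\bigl(H^d_\m(A)\to H^d_\m(R\pi_*\sO_Y)\bigr)$.
\item The lift of $z$ into the Witt-level kernel exists because $H^d_\m(W_{n-1}A)\to H^d_\m(R\pi_*W_{n-1}\sO_Y)$ is surjective (induct on $n$ from the dual of $\pi_*\omega_Y\hookrightarrow\omega_A$).
\item A power of $[c]$ kills the cone of $W_nA\to R\pi_*W_n\sO_Y$.
\item $F^e_*[c]$ acts on $F^e_*H^d_\m(W_nA)$ as $[c]$ does on $H^d_\m(W_nA)$.
\end{itemize}
Together these give $z\in 0^*_{A,n}$ for every $n$.

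\textbf{Part (1): the reduction to (2) is a genuine gap.} Remark~\ref{remark:quasi-F-singularities}(2) gives, for each $D$, only a \emph{single} $e$ with $(A,\Delta+p^{-e}D)$ quasi-$F^e$-split. Part (2) requires quasi-$F^{e'}$-splitting for \emph{every} $e'$, and Remark~\ref{remark:quasi-F-singularities}(1) only lets you lower $e'$. Classically one reaches all levels by composing Frobenius splittings. Quasi-$F$-splittings do not compose in general: if they did, quasi-$F$-splitting would imply quasi-$F^\infty$-splitting, which Baudin's examples rule out.

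The fix is to run your (3) argument with divisors.
\begin{itemize}
\item By duality, $(\Spec A,\Delta)$ is klt iff $H^d_\m(\omega_A)\to H^d_\m\bigl(R\pi_*\sO_Y(\lfloor\pi^*(K_A+\Delta)\rfloor)\bigr)$ is injective for every $\pi$. Its dual is $\pi_*\sO_Y(\lceil K_Y-\pi^*(K_A+\Delta)\rceil)\to A$.
\item Let $C_{n,e}$ be the cone of $W_nA(p^e(K_A+\Delta))\to R\pi_*W_n\sO_Y(p^e\pi^*(K_A+\Delta))$. These sit in triangles $F_*C_{n-1,e+1}\to C_{n,e}\to C_{1,e}$.
\item Since $r(K_A+\Delta)$ is Cartier, the $C_{1,e}$ fall into finitely many isomorphism classes, so one $c$ kills them all.
\item $[c]$ acts on $F_*C_{n-1,e+1}$ as $[c^p]$, so induction on $n$ shows that $[c^2]$ kills every $C_{n,e}$.
\item Lifting as in (3), each $z$ in the kernel lies in $\Ker\Phi^e_{A,K_A+\Delta,n}(c^2)$ for all $e,n$, and quasi-$F$-regularity forces $z=0$.
\end{itemize}
No change of level is needed.

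\textbf{Part (2): the proposal stops short of a proof.} The two steps you defer are its whole content, and the first is not formal.
\begin{itemize}
\item Quasi-$F^e$-splitting is tested on $H^d_\m$ at the closed point, and $\sO_{Y,E}$ is not a localization of $A$.
\item A $W_nA$-linear map out of $F^e_*W_nA(\cdot)$ is not encoded by one divisor, since $F^e_*W_nA$ is only filtered (not split) by the pieces $F^{e+i}_*A$. So the Hara--Watanabe step of extending the splitting to the function field and reading off its divisor along $E$ has no direct Witt analogue.
\item Your bound on the ``contribution of the $n$ Witt components'' is unsupported, particularly since $n$ depends on $e$.
\end{itemize}

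A mechanism that avoids all of this is global on $Y$; the paper itself uses it for the analogous Proposition~\ref{qFinj-to-Du-Bois}.
\begin{itemize}
\item Pass to a model carrying a $\pi$-ample $\pi$-exceptional Cartier divisor $H$ with $R\pi_*W_n\sO_Y(H)\simeq\pi_*W_n\sO_Y(H)$ for all $n$ simultaneously.
\item Choose $e$ large relative to $H$ (compare the choice $-p^eE\le H$ there), so that $F^e$ factors through $H^d_\m$ of a Witt sheaf on $Y$.
\item Only then take $n$ from the quasi-$F^e$-splitting at that fixed $e$.
\end{itemize}
This is where the need for arbitrarily large $e$ genuinely enters, rather than through Witt-component bookkeeping on a DVR.
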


\begin{remark}
Since quasi-$F$-splitting is equivalent to quasi-$F^{\infty}$-splitting in the Gorenstein case by Remark~\ref{remark:quasi-F-singularities}, Theorem~\ref{qFreg-to-klt}(2) shows that Gorenstein quasi-$F$-split singularities are lc.
\end{remark}

Next, we show that Cohen--Macaulay normal quasi-$F$-injective singularities are pseudo-Du~Bois. 

\begin{proposition}\label{qFinj-to-Du-Bois}
With the notation as in Setting~\ref{notation:qFinj}, assume that $A$ is Cohen--Macaulay and essentially of finite type over a field\footnote{The hypothesis that $A$ is essentially of finite type over a field is used only to ensure the existence of a regular alteration. In particular, it can be omitted in dimension at most three.}.
If $A$ is quasi-$F$-injective, then $\Spec A$ has pseudo-Du~Bois singularities. 
\end{proposition}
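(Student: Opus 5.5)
By Lemma~\ref{DB lem}, and after replacing the resolution there by a regular alteration (this is where the hypothesis that $A$ is essentially of finite type is used), it suffices to exhibit one projective generically finite morphism $\pi\colon Y\to X=\Spec A$ with $Y$ regular and $E=\pi^{-1}(\m)_{\mathrm{red}}\cup\Exc(\pi)$ an snc divisor, and to show $\pi_*\omega_Y(E)=\omega_A$ in the appropriate trace sense. For an alteration one must first check that Lemma~\ref{DB lem} still applies, using the trace map $\pi_*\omega_Y\to\omega_A$ in place of the inclusion; I expect this to go through as in the birational case.

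By Lemma~\ref{lem:local duality} with $n=1$, we have $H^d_\m(\sO_Y(-E))^\vee\cong(\pi_*\omega_Y(E))^\wedge$. Hence the statement $\pi_*\omega_Y(E)=\omega_A$ is equivalent to the injectivity of the natural map $H^d_\m(A)\to H^d_\m(\sO_Y)\to H^d_\m(\sO_E)$, or more precisely of the map from $H^d_\m(A)$ to $H^d_\m$ of the cone of $\sO_Y(-E)\to\sO_Y$.

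We argue by contradiction: assume this map has a nonzero kernel element $\xi$. Since $A$ is Cohen--Macaulay, Proposition~\ref{cor:qFinj-to-qF^einj} applies. It lets us lift $\xi$ to some $\xi_\infty\in\varprojlim_n H^d_\m(W_nA)$. It also tells us that $F^e$ is injective on $\varprojlim_n H^d_\m(W_nA)$ for every $e$. Moreover, by the Mittag--Leffler argument of Claim~\ref{prop:infty-qFinj}, we may choose $\xi_\infty$ inside the inverse limit of the kernels
\[
\Ker\bigl(H^d_\m(W_nA)\to H^d_\m(W_n\sO_Y)\to H^d_\m(W_n\sO_E)\bigr).
\]
Arranging this requires some care: one has to check that $\xi$ itself lies in the image of the limit of these kernels, not merely in the image of the full limit.

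The core step is then a Witt-vector vanishing statement on the snc side. I would aim to show that for any element $\eta$ of $\varprojlim_n H^d_\m(W_n\sO_Y)$ whose image in $\varprojlim_n H^d_\m(W_n\sO_E)$ vanishes, some Frobenius power $F^e(\eta)$ lies in the image of
\[
\varprojlim_n H^d_\m\bigl(W_n\sO_Y(-p^eE)\bigr)\longrightarrow \varprojlim_n H^d_\m(W_n\sO_Y).
\]
This uses $F^e(W_n\sO_Y(-E))\subseteq W_n\sO_Y(-p^eE)$ together with regularity of $Y$. Because $E$ contains the full fiber over $\m$, Grothendieck vanishing along that fiber should force the image of $\varprojlim_n H^d_\m(W_n\sO_Y(-p^eE))$ in $\varprojlim_n H^d_\m(W_nA)$, pulled back through the trace, to shrink to zero as $e\to\infty$. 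I would try to prove this by a Matlis-dual computation: dually, the images $\pi_*W_n\omega_Y(p^eE)$ should exhaust $W_n\omega_A$. Granting this, we get $F^e(\xi_\infty)=0$ for $e\gg0$, and injectivity of $F^e$ gives $\xi_\infty=0$, hence $\xi=0$.

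The main obstacle is this comparison step. It requires understanding the Witt-vector local cohomology of $\sO_E$ for an snc divisor, in particular using the vanishing $H^j_\m$ of $R^i\pi_*$ in the relevant degrees. It also requires showing that the Frobenius-twisted filtration by $W_n\sO_Y(-p^eE)$ is compatible with the maps $R$ and $V$ in the limit. If a direct argument fails, I would reduce to the case $n=1$ on each graded piece of the exact sequences $0\to F_*W_{n}\to W_{n+1}\to W_1\to0$, as in the proof of (i)$\Rightarrow$(iii) in Proposition~\ref{cor:qFinj-to-qF^einj}.
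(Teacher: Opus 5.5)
\textbf{There is a genuine gap, and it starts with your reformulation.} The composite $H^d_\m(A)\to H^d_\m(\sO_Y)\to H^d_\m(\sO_E)$ is identically zero. The map $A\to R\pi_*\sO_E$ factors through $\pi_*\sO_E$, which is supported on $\pi(E)$, a set of dimension at most $d-2$, so $H^d_\m(\pi_*\sO_E)=0$. The same holds at every Witt level. Hence every $\xi$ lies in your ``kernel''. Granting your core step, injectivity of $F^e$ would then force $\varprojlim_n H^d_\m(W_nA)=0$, which already fails for regular $A$.

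The correct dual of $\pi_*\omega_Y(E)=\omega_A$ is injectivity of $\psi_1\colon H^d_\m(A)\to H^d_\m(\sO_Y(-E))$, the Matlis dual of the trace $\pi_*\omega_Y(E)\to\omega_A$. This map is not induced by a map of sheaves. It carries strictly more information than the pullback $H^d_\m(A)\to H^d_\m(\sO_Y)$, which is injective only when $\pi_*\omega_Y=\omega_A$. For the same reason, knowing $F^e(\eta)$ on $Y$ tells you nothing about $F^e(\xi_\infty)$ on $A$. You need a map back from $H^d_\m(F^e_*W_n\sO_Y(-p^eE))$ to $H^d_\m(F^e_*W_nA)$.

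\textbf{Producing that map is the missing idea.} The paper does it as follows:
\begin{itemize}
\item It passes to a higher birational model carrying a $\pi$-ample $\pi$-exceptional Cartier divisor $H$ with $R^{>0}\pi_*\sO_Y(p^iH)=0$ for all $i\ge 0$.
\item The $V$--$R$ sequences then give $R\pi_*W_n\sO_Y(H)\simeq\pi_*W_n\sO_Y(H)$ for every $n$.
\item For $e$ with $-p^eE\le H$, the map $R\pi_*W_n\sO_Y(-p^eE)\to R\pi_*W_n\sO_Y$ factors through $W_nA$. This yields $\varphi$.
\item Consequently $F^e\colon H^d_\m(W_nA)\to H^d_\m(F^e_*W_nA)$ factors through $\psi_n$, so $\Ker\psi_n\subseteq K^e_n$.
\end{itemize}
Your dual guess, that $\pi_*W_n\omega_Y(p^eE)$ exhausts $W_n\omega_A$, is essentially the Matlis-dual form of this factorization. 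However, Grothendieck vanishing does not prove it. What matters is that a single $e$ works for all $n$: the level $n_0$ with $R^{n_0-1}(K^e_{n_0})=0$ supplied by quasi-$F^e$-injectivity depends on $e$.

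The lifting issue you flag is settled at level $n_0$, not in the inverse limit. Compare $H^d_\m(F_*W_{n_0-1}A)\xrightarrow{V}H^d_\m(W_{n_0}A)\to H^d_\m(A)\to 0$ with the analogous sequence for $W_{n_0}\sO_Y(-E)$. The left vertical map is surjective, being dual to an inclusion. A diagram chase then gives $\Ker\psi_1=R^{n_0-1}(\Ker\psi_{n_0})\subseteq R^{n_0-1}(K^e_{n_0})=0$.

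Finally, nothing in this argument uses regularity of $Y$ or an snc hypothesis on $E$. So your reduction via Lemma~\ref{DB lem} to a regular alteration is not needed, and you leave its extension to generically finite maps unproved anyway.
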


\begin{proof}
Let $\pi \colon Y \to X:=\Spec A$ be a projective birational morphism from a normal scheme $Y$.
By the proof of \cite{STY}*{Corollary~7.3}, after replacing $\pi$ by a higher birational model, we may assume that $Y$ admits a $\pi$-ample, $\pi$-exceptional Cartier divisor $H$.
Replacing $H$ by a suitable multiple, we may further assume that
$R\pi_*\mathcal{O}_Y(p^iH) \simeq \pi_*\mathcal{O}_Y(p^iH)$ for all $i \geq 0$. 
Using this and the exact sequence
\[
0 \to F^{n}_*\mathcal{O}_Y(p^nH) \xrightarrow{V^n} W_{n+1}\mathcal{O}_Y(H) \xrightarrow{R} W_{n}\mathcal{O}_Y(H) \to 0, 
\]
we obtain an isomorphism
\[
R\pi_*W_n\mathcal{O}_Y(H) \simeq \pi_*W_n\mathcal{O}_Y(H)
\]
for every integer $n \ge 1$. 

Let $E$ denote the reduced exceptional divisor of $\pi$, and fix an integer $e \ge 1$ sufficiently large so that $-p^eE \le H$.
Let $U \subseteq X$ be a big open subset over which $\pi$ is an isomorphism, and let $i \colon U \hookrightarrow X$ be the open immersion. 
Since $\pi$ is an isomorphism over $U$, we identify
$(W_n\omega_Y(E))|_{\pi^{-1}(U)} \simeq W_n\omega_X|_U$. 
Since $W_n\omega_X$ satisfies $(S_2)$ by \cite{STY}*{Proposition~2.26}, this identification gives a natural map
\[
\operatorname{Tr}^E_{\pi,n} \colon \pi_*W_n\omega_Y(E) \to i_*\left((\pi_*W_n\omega_Y(E))|_U\right) \cong i_*(W_n\omega_X|_U) \cong W_n\omega_A. 
\]
We can similarly define $\operatorname{Tr}_{\pi,n} \colon \pi_*W_n\omega_Y \to W_n\omega_A$. 
The horizontal maps in the following diagram are the Frobenius trace maps on Witt sheaves, and the other maps are induced by the natural inclusions and by $\operatorname{Tr}_{\pi,n}$ and $\operatorname{Tr}^E_{\pi,n}$.
\[
\begin{tikzcd}
  W_n\omega_A 
  & F^e_*W_n\omega_A \arrow[l]  \\
& F^e_*\pi_*W_n\omega_Y \arrow[u, "F^e_*\operatorname{Tr}_{\pi,n}"'] \arrow[d] \\
\pi_*W_n\omega_Y(E) \arrow[uu, "\operatorname{Tr}^E_{\pi,n}"]
& F^e_*\pi_*W_n\omega_Y(p^eE) \arrow[l]  
\end{tikzcd}
\]
The commutativity follows after restricting to $U$.
By Lemma~\ref{lem:local duality}, we have 
\begin{align*}
H^d_\m(W_nA)^\vee
&\simeq
(W_n\omega_A)^\wedge, \\
H^d_\m(W_n\mathcal O_Y(-E))^\vee
&\simeq
\Gamma\left(Y,
\mathcal{H}om_{W_n\mathcal O_Y}
(W_n\mathcal O_Y(-E),W_n\omega_Y)
\right)^\wedge \\
&\simeq
\Gamma(Y,W_n\omega_Y(E))^\wedge \\
&\simeq
(\pi_* W_n\omega_Y(E))^\wedge.
\end{align*}
Under these identifications, the Matlis dual of the completion of $\operatorname{Tr}^E_{\pi,n}$ gives a map
\[
\psi_n\colon H^d_\m(W_nA)\to H^d_\m(W_n\mathcal O_Y(-E)).
\]
Taking the Matlis dual of the above diagram in the sense of Lemma~\ref{lem:local duality}, we obtain the following commutative diagram: 
\begin{equation}\label{eq:diag-ap}
\begin{tikzcd}[column sep=1.5em]
H^d_\m(W_nA) \arrow[r,"F^e"] \arrow[dd, "\psi_n"']
  & H^d_\m(F^e_*W_nA) \arrow[d] \\
& H^d_\m(F^e_*W_n\mathcal{O}_Y)  \\
H^d_\m(W_n\mathcal{O}_Y(-E)) \arrow[r,"F^e"] 
  & H^d_\m(F^e_*W_n\mathcal{O}_Y(-p^eE)) \arrow[u] \arrow[uu, bend right=70,"\varphi"']
\end{tikzcd}
\end{equation}
where the map $\varphi$ exists by the claim below.

\begin{claim*}
In the bounded derived category of $W_nA$-modules, the morphism  
\[
R\pi_*W_n\mathcal{O}_Y(-p^eE) \longrightarrow R\pi_*W_n\mathcal{O}_Y
\]
induced by the inclusion
$W_n\mathcal{O}_Y(-p^eE) \subseteq W_n\mathcal{O}_Y$
factors through
$\pi_*W_n\mathcal{O}_Y \cong W_nA$.
\end{claim*}

\begin{proof}[Proof of Claim]
Since $R\pi_*W_n\mathcal{O}_Y(H)\simeq \pi_*W_n\mathcal{O}_Y(H)$, 
the inclusions 
\[
W_n\mathcal{O}_Y(-p^eE) \subseteq W_n\mathcal{O}_Y(H) \subseteq W_n\mathcal{O}_Y
\]
induce a morphism $R\pi_*W_n\mathcal{O}_Y(-p^eE) \to R\pi_*W_n\mathcal{O}_Y$ 
that factors through
\[\pi_*W_n\mathcal{O}_Y(H)\to \pi_*W_n\mathcal{O}_Y \cong W_nA.\]
\end{proof}
By the claim, after applying the exact functor $F^e_*$, the morphism 
\[
R\pi_*F^e_*W_n\mathcal O_Y(-p^eE)
\longrightarrow
R\pi_*F^e_*W_n\mathcal O_Y
\]
induced by the inclusion factors through $F^e_*W_nA$. 
Applying $H^d(R\Gamma_{\m_n}(-))$ to this gives a map
\[
\varphi\colon
H^d_\m(F^e_*W_n\mathcal O_Y(-p^eE))
\longrightarrow
H^d_\m(F^e_*W_nA),
\]
and the resulting triangle is commutative.

It follows from diagram \eqref{eq:diag-ap} that the composition
\[
H^d_\m(W_nA) \xrightarrow{\psi_n} 
H^d_\m(W_n\mathcal{O}_Y(-E)) 
\longrightarrow 
H^d_\m(F^e_*W_nA).
\]
is $F^e$. 
In particular, we have $\Ker(\psi_n) \subseteq K^e_n$. 
By the proof of Proposition~\ref{cor:qFinj-to-qF^einj}, there exists an integer $n_0 \ge 1$ such that
$R^{n_0-1}(K^e_{n_0}) = 0$, so $R^{n_0-1}(\Ker(\psi_{n_0})) = 0$.
Now we consider the following commutative diagram with exact rows:
\[
\begin{tikzcd}[column sep=small]
H^d_\m(F_*W_{n_0-1}A) \arrow[r, "V"] \arrow[d,twoheadrightarrow,"\psi'"] 
  & H^d_\m(W_{n_0}A) \arrow[r, "R^{n_0-1}"] \arrow[d,twoheadrightarrow,"\psi_{n_0}"] 
  & H^d_\m(A) \arrow[r] \arrow[d,twoheadrightarrow,"\psi_1"] & 0 \\
H^d_\m(F_*W_{n_0-1}\mathcal{O}_Y(-pE)) \arrow[r, "V"] 
  & H^d_\m(W_{n_0}\mathcal{O}_Y(-E)) \arrow[r] 
  & H^d_\m(\mathcal{O}_Y(-E)) \arrow[r] & 0,
\end{tikzcd}
\]
where $\psi'$ is the Matlis dual, in the sense of Lemma~\ref{lem:local duality}, of the completion of the map 
\[
F_*\pi_*W_{n_0-1}\omega_Y(pE) \longrightarrow F_*W_{n_0-1}\omega_A.
\]
A diagram chase, together with the vanishing $R^{n_0-1}(\Ker(\psi_{n_0}))=0$, shows that 
\[
\Ker(\psi_1) = R^{n_0-1}(\Ker(\psi_{n_0})) = 0.
\]
Taking the Matlis dual of $\psi_1$, we see that the natural map $\pi_*\omega_Y(E) \to \omega_A$ is surjective. 
Hence $\pi_*\omega_Y(E)=\omega_A$, and thus $A$ is pseudo-Du~Bois. 
\end{proof}

\subsection{From singularities arising in birational geometry to quasi-$F$-singularities}

In dimension two, quasi-$F$-singularities provide a closer positive-characteristic counterpart to singularities in birational geometry than classical $F$-singularities do. 
The following theorem shows, for example, that klt singularities are characterized by quasi-$F$-regularity, and that lc singularities are characterized by quasi-$F$-splitting under mild hypotheses.

\begin{theorem}\label{klt-to-qFreg}
With the notation as in Setting~\ref{notation:qFinj}, assume that $d = 2$.
\begin{enumerate}
    \item[(1)] $($\cite{KTTWYY3}*{Theorem~C}$)$ The pair $(A,\Delta)$ is quasi-$F$-regular if and only if $(\Spec A, \Delta)$ is klt. 
    \item[(2)] $($\cite{STY}*{Theorem~B}$)$ Suppose that the residue field $A/\m$ is perfect and $K_A+\Delta$ is $\Q$-Cartier with index not divisible by $p$. 
    Then the pair $(A,\Delta)$ is purely quasi-$F^{\infty}$-split if and only if $(\Spec A, \Delta)$ is lc. 
    \item[(3)] $($\cite{STY}*{Theorem~C}$)$
Suppose that the residue field $A/\m$ is perfect. 
Then $A$ is quasi-$F$-split\footnote{In dimension two, quasi-$F$-splitting is equivalent to quasi-$F^{\infty}$-splitting by Remark~\ref{remark:quasi-F-singularities}(1).} if and only if one of the following conditions holds: 
\begin{enumerate}
\item[(a)] $\Spec A$ is klt, 
\item[(b)] $\Spec A$ is lc and the Gorenstein index of $A$ is not divisible by $p$. 
\end{enumerate}
In particular, when $p>3$, quasi-$F$-splitting, quasi-$F^{\infty}$-splitting and lc singularities are equivalent. 
\end{enumerate}
\end{theorem}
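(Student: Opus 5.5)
The statement just quoted is Theorem~\ref{klt-to-qFreg}. Each of its three parts is attributed to earlier work, so the proof will be a reduction to those results together with a check that our definitions match. The plan follows.

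For (1), the direction ``quasi-$F$-regular $\Rightarrow$ klt'' is Theorem~\ref{qFreg-to-klt}(1). That result needs $K_A+\Delta$ to be $\Q$-Cartier. In dimension two this is automatic after passing to a log resolution with numerical pullback, using the Mumford pullback as in \cite{KTTWYY3}. For the converse I would use the characterization in Remark~\ref{remark:quasi-F-singularities}(2). It suffices to show that for each effective Cartier divisor $D$ there is an $e$ with $(A,\Delta+p^{-e}D)$ quasi-$F^e$-split. Since $(\Spec A,\Delta)$ is klt, the pair $(\Spec A,\Delta+p^{-e}D)$ is still klt for $e\gg0$. So the key input is a two-dimensional statement: klt pairs are quasi-$F^e$-split for every $e$, with $n$ possibly depending on $e$. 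To prove it, take the minimal resolution $\pi\colon Y\to \Spec A$, whose exceptional curves form a tree of rational curves. Using Lemma~\ref{lem:local duality}, reduce injectivity of $\Phi^e_{A,K_A+\Delta,n}$ on $H^2_\m(\omega_A)$ to a vanishing statement on $Y$. One needs $H^1$ of certain $W_n$-twisted sheaves $W_n\sO_Y(\lfloor p^iE\rfloor)$ on the exceptional locus to vanish, where $E$ is a suitable antinef divisor built from $\pi^*(K_A+\Delta)-K_Y$. This vanishing then runs by induction on $n$ via the sequence $0\to F_*W_{n-1}\to W_n\to \sO\to 0$.

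For (2) and (3), the ``only if'' directions follow from Theorem~\ref{qFreg-to-klt}(2). For (3) one also needs the remark that index divisible by $p$ forces klt, which comes from the classification of two-dimensional lc, non-klt singularities. For the converse in (2), the klt case is covered by (1). In the strictly lc case, use the classification: either a simple elliptic or cusp singularity, or a quotient of one by a group of order prime to $p$, via the index-one cover, which is étale in codimension one because $p\nmid$ index. Quasi-$F^\infty$-splitting descends along such covers through the trace map. It therefore remains to treat the Gorenstein lc case. There quasi-$F$-splitting equals quasi-$F^\infty$-splitting by Remark~\ref{remark:quasi-F-singularities}(1). The remaining check is on the elliptic curve, or cycle of rational curves, $E$ in the minimal resolution: its height is finite, so it is quasi-$F$-split, and one lifts the splitting via the same $W_n$-vanishing inductive argument as in (1). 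For (3), when $p>3$ the Gorenstein index of an lc surface singularity divides $1,2,3,4,6$, so it is coprime to $p$ and (b) applies to every lc singularity.

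The main obstacle is the Witt-vector vanishing on the resolution in (1), together with controlling the dependence of $n$ on $e$. This is where perfectness of $A/\m$ enters in (2) and (3). It is needed so that the exceptional elliptic curve or cycle is geometrically nice and the Artin--Mazur height argument applies.
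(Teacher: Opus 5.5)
The paper gives no argument for this theorem; all three parts are quoted from \cite{KTTWYY3}*{Theorem~C} and \cite{STY}*{Theorems~B and~C}. Your opening sentence matches that, but the arguments you supply in place of the citations have genuine gaps.

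The clearest gap is in (3). The assertion that ``index divisible by $p$ forces klt'' is false for lc surface singularities and does not follow from the classification. The dual-graph classification of lc, non-klt surface singularities is characteristic-free. For example, a smooth rational curve of self-intersection $-3$ meeting four disjoint $(-2)$-curves contracts, by Artin, to a rational singularity. Its discrepancies are $-1$ and $-\tfrac12$, so it is lc, not klt, of index $2$, and this happens in every characteristic. The index $3,4,6$ types behave the same way. So for $p=2,3$ there are lc, non-klt singularities with $p\mid$ index, and by the theorem itself they are \emph{not} quasi-$F$-split. The ``only if'' direction of (3) needs exactly this non-splitting statement, which is genuine content of \cite{STY}. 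Also, Theorem~\ref{qFreg-to-klt}(2) assumes $K_A$ is $\Q$-Cartier, which (3) does not.

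The second gap is in (1). Your reduction through Remark~\ref{remark:quasi-F-singularities}(2) is fine, but the input you propose cannot work, and you reuse it for the lifting step in (2). On $Y$, injectivity of $H^2_\m(\sO_Y(B_Y))\to H^2_\m(Q^e_{Y,B_Y,n})$ means exactly that $R^{n-1}$ kills $\Ker\bigl(F^e\colon H^2_\m(W_n\sO_Y(B_Y))\to H^2_\m(F^e_*W_n\sO_Y(p^eB_Y))\bigr)$. This is a statement about the cokernel $B^e_{Y,B_Y,n}$ of $F^e$ on Witt sheaves. A vanishing for the sheaves $W_n\sO_Y(\lfloor p^iE\rfloor)$, obtained by d\'evissage along $0\to F_*W_{n-1}\to W_n\to\sO\to0$ from the case $n=1$, says nothing about this Frobenius kernel. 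If such a vanishing sufficed, its case $n=1$ would make every two-dimensional klt singularity strongly $F$-regular. Indeed, the case $n=1$ of Definition~\ref{def:quasi-F-invariants}(iv) is strong $F$-regularity (cf.\ Remark~\ref{F-sing basic}(3)), and this contradicts Example~\ref{example:Fsing}(1). The missing idea is the analysis of $B^e_{Y,B_Y,n}$ through the log Cartier operator, i.e.\ conditions (i)--(iii) in the proof of Theorem~\ref{rational-to-qFrat}. Torsion-freeness disposes of the $H^0_\m$ terms. The twist $p^{n+c}B_Y$ grows with $n$, so Serre vanishing kills $H^1(Y,\Omega^1_Y(\log E)^*\otimes\sO_Y(K_Y-p^{n+c}B_Y))$ for $n\gg0$. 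That is why $n$ must be taken large, and your sketch has no substitute for it.

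Finally, your index-one-cover reduction in (2) handles only $\Delta=0$. For a general boundary, lc non-klt pairs are not quotients of simple elliptic or cusp singularities. Moreover, a cyclic cover branched along $\Delta$ turns non-standard coefficients into negative ones. So the pair case of (2) is not treated, including the case $\lfloor\Delta\rfloor\ne0$ where the relevant notion is purely quasi-$F^\infty$-split.
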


We now turn to pseudo-rational and pseudo-Du~Bois singularities.
We first prove the following result on pseudo-rational singularities.
\begin{theorem}\label{rational-to-qFrat}
With the notation as in Setting~\ref{notation:qFinj}, assume that $d = 2$.
Then $A$ is quasi-$F$-rational if and only if $\Spec A$ has pseudo-rational singularities. 
\end{theorem}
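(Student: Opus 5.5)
The implication ``quasi-$F$-rational $\Rightarrow$ pseudo-rational'' is Theorem~\ref{qFreg-to-klt}(3), so only the converse needs proof. Assume $\Spec A$ is pseudo-rational with $d=2$; then $A$ is Cohen--Macaulay, and it remains to show $0^{q*}_A=0$, equivalently $\tau^q(\omega_A)=\omega_A$. Since the formation of $\tau_n(\omega_A)$ and $\tau^q(\omega_A)$ commutes with localization and completion (Definition~\ref{def:test-submod}(iv)), I first reduce to the complete case. A two-dimensional normal ring is regular in codimension one, so $\omega_A/\tau^q(\omega_A)$ has finite length. I would also try to pass to a separably closed residue field by an \'etale extension, checking that pseudo-rationality survives.

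The plan is to make the pattern of proof of Proposition~\ref{qFinj-to-Du-Bois} work on a resolution. Take a minimal resolution $\pi\colon Y\to X=\Spec A$, which exists for excellent surfaces. Pseudo-rationality in dimension two is Lipman's rationality, so $R^1\pi_*\mathcal O_Y=0$ and Lipman's vanishing holds: $R^1\pi_*\mathcal O_Y(-D)=0$ for every effective exceptional $D$ with $-D$ $\pi$-nef, and dually for $\omega_Y$-twists. The first step is to bootstrap these vanishings to Witt sheaves, using the exact sequences
\[
0\to F_*W_{n-1}\mathcal O_Y(pD)\xrightarrow{V} W_n\mathcal O_Y(D)\xrightarrow{R^{n-1}}\mathcal O_Y(D)\to 0
\]
and induction on $n$. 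Because $p^iD$ stays anti-nef when $D$ is, each graded piece keeps vanishing, and this gives $R^1\pi_*W_n\mathcal O_Y(-D)=0$ for all $n$ and suitable $D$.

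Next, for a given $c\in A^{\circ}$, choose an effective exceptional anti-nef divisor $D$, for instance a multiple of the fundamental cycle, together with $e\ge 1$ so that $\operatorname{div}(\pi^*c)\le p^eD$ plus a controlled non-exceptional part. Dualizing via Lemma~\ref{lem:local duality}, as in the proof of Proposition~\ref{qFinj-to-Du-Bois}, I expect $\Phi^e_{A,n}(c)$ on $H^2_\m(A)$ to factor through $H^2_\m$ of a pushout built on $Y$. The Witt vanishing then makes the relevant trace
\[
\pi_*\bigl(F^e_*W_n\omega_Y(\cdots)\bigr)\to \omega_A
\]
surjective; equivalently the kernel of $\Phi^e_{A,n}(c)$ vanishes. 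The surface input here is that on $Y$, which is regular, the pair $(Y,\varepsilon\,\operatorname{div}(c))$ is quasi-$F$-regular locally. I would invoke Theorem~\ref{klt-to-qFreg}(1) on the regular scheme $Y$, or Remark~\ref{remark:quasi-F-singularities}(2), to get this local statement, then glue it using the $H^1$-vanishing.

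I expect the main obstacle to be the gluing step. Local quasi-$F$-regularity of $Y$ has to be promoted to injectivity on $H^2_\m(A)$, and the Witt-vector pushout $Q^e_{A,n}(c)$ has no $A$-module structure for $e>1$ (Remark~\ref{Q remark}). I would first try to control the obstruction through the cohomology $H^1(Y, B_{e}\Omega\text{-type sheaves})$, following the klt argument of \cite{KTTWYY3}*{Theorem~C}, with Lipman's vanishing replacing the klt discrepancy estimates. If that fails, I would instead choose $n$ large and use the Mittag--Leffler arguments from the proof of Proposition~\ref{cor:qFinj-to-qF^einj} to kill the image of the kernel under $R^{n-1}$.
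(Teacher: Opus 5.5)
There is a genuine gap: the step that is supposed to give injectivity of $\Phi^e_{A,n}(c)$ on $H^2_\m(A)$ is never actually supplied, and the mechanisms you propose for it cannot work as stated.

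\textbf{Why your proposed mechanisms fail.} Your Witt-vector Lipman vanishing $R^1\pi_*W_n\sO_Y(-D)=0$ follows formally from the case $n=1$ and carries no Frobenius information. Nothing in your sketch uses $n\gg 0$ in an essential way. If ``Witt vanishing $\Rightarrow$ the trace is surjective'' were valid, the same reasoning at $n=1$ would show that every two-dimensional rational singularity is $F$-rational, since $\Phi^e_{A,1}(c)$ is exactly the $F$-rationality test map. That contradicts Example~\ref{example:Fsing}(1) ($x^2+y^3+z^5$ with $p\le 5$).

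Likewise, local quasi-$F$-regularity of $(Y,\varepsilon\operatorname{div}(\pi^*c))$ is automatic because $Y$ is regular; it is even strongly $F$-regular for small $\varepsilon$. So it contributes nothing. The whole difficulty is the global obstruction along the exceptional curves, which you correctly call ``the gluing step'' but leave open.

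The Mittag--Leffler fallback does not fill this either. In Proposition~\ref{cor:qFinj-to-qF^einj} those arguments only transport a vanishing between finite levels and the inverse limit, and they need an independent injectivity input to start. The reductions to the complete case, to a separably closed residue field, and to finite length of $\omega_A/\tau^q(\omega_A)$ are harmless but play no role.

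\textbf{The missing idea.} What is needed is a vanishing on $Y$ that genuinely improves as $n$ grows. The paper proceeds as follows.
\begin{enumerate}
\item Take a log resolution, not a minimal one. Log differentials are used, and a minimal resolution need not have snc exceptional locus. Take also a $\pi$-anti-ample effective exceptional divisor $E$; an anti-nef cycle such as the fundamental cycle is not enough.
\item Write $Q^e_{A,n}(c)\cong Q^e_{A,p^{-e}D,n}$ with $D=\operatorname{div}(c)$. Choose $e\gg0$ so that $B_Y:=p^{-e}(E+\pi^*D)$ has $\lfloor B_Y\rfloor=0$ and is $\pi$-anti-ample.
\item Use pseudo-rationality only once: Matlis duality applied to $\pi_*\omega_Y=\omega_X$ gives $H^2_\m(A)\cong H^2_\m(\sO_Y)=H^2_\m(\sO_Y(B_Y))$. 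No Lipman-type vanishing is needed.
\item A commutative square reduces everything to injectivity of $H^2_\m(\sO_Y(B_Y))\to H^2_\m(Q^e_{Y,B_Y,n})$. This follows from the vanishing of $H^1_\m$ of its cokernel.
\item That vanishing is proved as in \cite{TWY}*{Theorem~3.27}. It uses the $H^0_\m$-vanishing of the torsion-free sheaves $\Omega^1_Y(\log E)(p^cB_Y)$ and $B_1\Omega^2_Y(\log E)(p^kB_Y)$. It also uses $H^1(Y,\Omega^1_Y(\log E)^*\otimes\sO_Y(K_Y-p^{n+c}B_Y))=0$, which holds by Serre vanishing \emph{only for $n\gg0$}, because $-B_Y$ is $\pi$-ample.
\end{enumerate}
That last step is exactly where the ``quasi'' enters, and it is what your proposal lacks. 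Your first fallback, controlling the obstruction through $B_e\Omega$-type cohomology, points in this direction. Without the anti-ample twist and Serre vanishing in $n$, however, it is not yet an argument.
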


\begin{proof}
We will show the ``if" direction. 
Let $\pi \colon Y \to X := \Spec A$ be a log resolution and $D=\operatorname{div}_X(c)$ be an arbitrary effective Cartier divisor on $X$. 
Since $Q_{A,n}^e(c) \cong Q^e_{A,(1/p^e)D,n}$, 
it suffices to show that the natural map
\[
H^d_\m(A) \longrightarrow H^d_\m(Q^e_{A,(1/p^e)D,n})
\]
is injective for some integers $e, n \ge 1$.

Let $E$ be an effective $\pi$-anti-ample exceptional Cartier divisor on $Y$.
Choose an integer $e$ large enough so that 
\[
\left\lfloor \frac{1}{p^e}(E + \pi^*D) \right\rfloor = 0, 
\]
and set $\Delta := ({1}/{p^e})D$ and $B_Y := ({1}/{p^e})(E + \pi^*D)$.
Note that $B_Y$ is $\pi$-anti-ample.
Since $A$ has rational singularities, the natural map $\pi_*\omega_Y \to \omega_X$ is an isomorphism. 
Taking its Matlis dual, we obtain an isomorphism 
\[
H^2_\m(A) \xrightarrow{\ \simeq\ }  H^2_\m(\mathcal{O}_Y)=H^2_\m(\mathcal{O}_Y(B_Y)).
\]

Now consider the following commutative diagram:
\[
\begin{tikzcd}
A \arrow[r] \arrow[d] 
  & Q^e_{A,\Delta,n} \arrow[d] \\
R\pi_*\mathcal{O}_Y(B_Y) \arrow[r] 
  & R\pi_*Q^e_{Y,B_Y,n}.
\end{tikzcd}
\]
Accordingly, the above diagram induces the following commutative diagram on local cohomology:
\[
\begin{tikzcd}
H^2_\m(A) \arrow[r] \arrow[d,"\simeq"'] 
  & H^2_\m(Q^e_{A,\Delta,n}) \arrow[d] \\
H^2_\m(\mathcal{O}_Y(B_Y)) \arrow[r] 
  & H^2_\m(Q^e_{Y,B_Y,n}).
\end{tikzcd}
\]
Therefore, it suffices to prove that the bottom horizontal map is injective. 
To this end, we show that $H^1_\m(B^e_{Y,B_Y,n}) = 0$, where $B^e_{Y,B_Y,n}$ denotes the cokernel of the map $\sO_Y(B_Y) \to Q^e_{Y,B_Y,n}$. 
By an argument similar to the proof of \cite{TWY}*{Theorem~3.27} (see also \cite{KTTWYY1}*{Theorem~5.13} for the case $e=1$), this vanishing can be verified once the following conditions are satisfied: 
\begin{enumerate}
    \item[(i)] $H^0_{\m}(Y, \Omega^1_Y(\log E)(p^cB_Y)) = 0$ for every $0 \le c \le e-1$;
    \item[(ii)] $H^0_{\m}(Y, B_1\Omega_Y^2(\log E)(p^kB_Y)) = 0$ for every $k \ge 1$; and
    \item[(iii)] $H^1(Y, \Omega_Y^1(\log E)^* \otimes \mathcal{O}_Y(K_Y - p^{n+c}B_Y)) = 0$ for every $0 \le c \le e-1$, where
    $\Omega^1_Y(\log E)^* := \mathcal{H}\!om_{\mathcal{O}_Y}(\Omega_Y^1(\log E), \mathcal{O}_Y)$.
\end{enumerate}
Conditions (i) and (ii) follow from the torsion-freeness of the sheaves involved, and (iii) holds by Serre vanishing for sufficiently large $n$.
This proves the desired vanishing and completes the proof.
\end{proof}

\begin{lemma}\label{qFinj-resolution}
With the notation as in Setting~\ref{notation:qFinj}, assume that $d = 2$.
Let $\pi \colon Y \to \Spec A$ be a log resolution and $E$ be its reduced exceptional divisor.  
\begin{enumerate}
  \item[(1)] The Frobenius-induced map
  \[
  F \colon \varprojlim_n H^2_\m(W_n\mathcal{O}_Y) \longrightarrow \varprojlim_n F_*H^2_\m(W_n\mathcal{O}_Y)
  \]
  is injective.

  \item[(2)] If $k=A/\m$ is perfect, then the Frobenius-induced map
  \[
  F \colon \varprojlim_n H^1_\m(W_n\mathcal{O}_E) \longrightarrow \varprojlim_n F_*H^1_\m(W_n\mathcal{O}_E)
  \]
  is injective.
\end{enumerate}
\end{lemma}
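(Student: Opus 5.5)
Both parts rest on one mechanism: the kernel of $F$ on an inverse limit of Witt local cohomology groups is controlled by the $V$--$R$ exact sequences. By the diagram chase in the proof of Proposition~\ref{cor:qFinj-to-qF^einj}, an element $\xi$ of the kernel that lies in the image of $V$ forces its preimage to be in the kernel at one level lower. I therefore reduce everything to a statement at level $n=1$ and then run the same induction.

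For (1), write $K^Y_n:=\Ker(F\colon H^2_\m(W_n\sO_Y)\to F_*H^2_\m(W_n\sO_Y))$. Since $Y$ has dimension two and $\pi^{-1}(\m)=E$ has dimension one, $H^3_\m(\mathcal F)=0$ for every coherent sheaf $\mathcal F$. Hence $V$ and $R$ give exact sequences
\[
H^2_\m(F_*W_{n-1}\sO_Y)\xrightarrow{V}H^2_\m(W_n\sO_Y)\xrightarrow{R^{n-1}}H^2_\m(\sO_Y)\to 0 .
\]
The difficulty is that $V$ need not be injective here. I avoid this by working directly with the inverse limit. Both $\varprojlim_n H^2_\m(W_n\sO_Y)$ and the Frobenius on it are dual, via Lemma~\ref{lem:local duality}, to the limit of $\pi_*W_n\omega_Y$ with the trace. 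The key input is that $Y$ is regular, so the Cartier operator $C\colon F_*\omega_Y\to\omega_Y$ is surjective and, more strongly, $F_*W_n\omega_Y\to W_n\omega_Y$ is surjective. This is Ekedahl duality with the Cartier operator on the de Rham--Witt complex of a smooth scheme; alternatively, it follows from $F$-splitting of regular local rings applied to $W_n$ via the standard sequences, by induction on $n$ through the dual sequences $0\to W_{n-1}\omega_Y\to W_n\omega_Y\to \omega_Y$ and the five lemma. Taking $\Gamma$, dualizing and passing to the limit gives injectivity of $F$ on each $H^2_\m(W_n\sO_Y)$ modulo $\lim^1$-type issues. Those issues are killed by Mittag--Leffler, because all the modules are Artinian. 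In fact it seems cleaner to prove injectivity at each finite level $n$: $F\colon H^2_\m(W_n\sO_Y)\to F_*H^2_\m(W_n\sO_Y)$ is the Matlis dual of $H^0$ of the surjective sheaf map $F_*W_n\omega_Y\to W_n\omega_Y$. But $H^0$ does not preserve surjectivity, so I instead use a splitting. For a regular $F$-finite scheme, $F\colon W_n\sO_Y\to F_*W_n\sO_Y$ should split locally, but a global splitting is not available. This is why the statement is about the limit, and the main obstacle is to pass from local (sheaf-level) injectivity to the limit. My plan here is to use the finite-level kernels $K^Y_n$, show via the local splitting that $R^{n-1}(K^Y_n)$ is killed in $H^2_\m(\sO_Y)$ after passing to a higher level, by the $n=1$ input that $H^2_\m(\sO_Y)\to H^2_\m(F_*\sO_Y)$ is injective on a regular $Y$, and then run the induction of Proposition~\ref{cor:qFinj-to-qF^einj} to conclude that $\varprojlim K^Y_n=0$.

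For (2), $E$ is a one-dimensional proper reduced snc curve over $k$, so $H^1_\m(W_n\sO_E)=H^1(E,W_n\sO_E)$. Since $k$ is perfect, $\varprojlim_n H^1(E,W_n\sO_E)=H^1(E,W\sO_E)$ is a finitely generated $W(k)$-module, the Witt vector cohomology. By Dieudonné theory this module decomposes under $F$ into a part on which $F$ is bijective and a part on which it is topologically nilpotent. Injectivity of $F$ then amounts to the absence of $p$-torsion-like $F$-nilpotent elements. I would reduce to the normalization $\nu\colon\tilde E\to E$, a disjoint union of smooth curves. The sequence $0\to W_n\sO_E\to \nu_*W_n\sO_{\tilde E}\to \mathcal{S}\to0$ has $\mathcal S$ supported on the nodes, and since $E$ is snc those nodes have reduced, perfect residue fields. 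This gives an exact sequence $H^0(\mathcal S)\to H^1(W_n\sO_E)\to H^1(W_n\sO_{\tilde E})\to 0$, compatible with $F$. On $H^0(\mathcal S)=\bigoplus W_n(k')$ the Frobenius is bijective because $k'$ is perfect. For smooth proper curves over a perfect field, $H^1(\tilde E,W\sO)$ is torsion-free and $F$ is injective on it, since $VF=FV=p$ is injective there. A snake-lemma argument on the limits, again using Mittag--Leffler for the Artinian terms, then gives injectivity for $E$. I expect the torsion-freeness of $H^1(\tilde E,W\sO)$ for each smooth component to be standard; the delicate step is checking exactness of the limit sequences.
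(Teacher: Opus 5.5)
Part (1) has a genuine gap. Your reduction target is the right one: show that $K^Y_m\to K^Y_1$ is zero for some $m$, then run the induction. But you justify it by the ``$n=1$ input'' that $F\colon H^2_\m(\sO_Y)\to H^2_\m(F_*\sO_Y)$ is injective because $Y$ is regular, and that is false.

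The module $H^2_\m(\sO_Y)=H^2_E(Y,\sO_Y)$ is cohomology supported on the whole exceptional fibre. By Lemma~\ref{lem:local duality}, its Frobenius is Matlis dual to the trace $F_*\pi_*\omega_Y\to\pi_*\omega_Y$ on \emph{global} sections over $\Spec A$, and local $F$-splitting of $Y$ does not make that trace surjective. For a concrete counterexample, take $p=2$ and $A=k[[x,y,z]]/(x^2+y^3+z^5)$:
\begin{itemize}
\item $A$ is a rational double point, so $R\pi_*\sO_Y\simeq A$ and $H^2_\m(\sO_Y)\cong H^2_\m(A)$ compatibly with $F$.
\item Since $x^2+y^3+z^5\in\m^{[2]}$, $A$ is not $F$-pure, and being Gorenstein it is not $F$-injective.
\end{itemize}
Hence $K^Y_1\neq 0$; your earlier claim of injectivity ``on each $H^2_\m(W_n\sO_Y)$'' also fails.

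What is true, and what the paper proves, is only the finite-level statement that $H^2_\m(\sO_Y)\to H^2_\m(Q_{Y,m})$ is injective for some $m$. This is a global quasi-$F$-splitting statement and needs an input your proposal lacks:
\begin{itemize}
\item Choose a $\pi$-anti-ample $\Q$-divisor $B$ supported on $E$ with $\lfloor B\rfloor=0$, so that $H^2_\m(\sO_Y)=H^2_\m(\sO_Y(B))$.
\item Show that $H^1_\m$ of the cokernel of $\sO_Y(B)\to Q_{Y,B,m}$ vanishes for $m\gg0$. This uses the log Cartier operator on $\Omega^\bullet_Y(\log E)$ twisted by multiples of $B$, together with Serre vanishing for $H^1(Y,\Omega^1_Y(\log E)^*\otimes\sO_Y(K_Y-p^{m}B))$.
\end{itemize}
Anti-ampleness of $B$ is exactly what makes large $m$ help; local splitting of Frobenius has no mechanism for this.

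Separately, the induction of Proposition~\ref{cor:qFinj-to-qF^einj} that you want to run uses injectivity of $V\colon F_*H^2_\m(W_{r-1}\sO_Y)\to H^2_\m(W_r\sO_Y)$ in an essential way. It is how one passes from $F(\xi)=0$ to $F(\eta)=0$. You note that $V$ may fail to be injective and say you will avoid this by working with the inverse limit, but no argument is given. The missing fact is $H^1_\m(\sO_Y)=0$, which is Matlis dual to Grauert--Riemenschneider vanishing $R^1\pi_*\omega_Y=0$. With it, all the $V$--$R$ sequences are short exact on $H^2_\m$.

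Part (2), by contrast, is sound, though it takes a different route from the paper. The paper shows $\varprojlim_n H^0(E,B_{E,n})=0$ using three ingredients:
\begin{itemize}
\item the decomposition $B_{E,n}\simeq\bigoplus_i B_{E_i,n}$;
\item Serre's isomorphism $B_{E_i,n}\cong B_n\Omega_{E_i}$;
\item stabilization of $H^0(E_i,B_n\Omega_{E_i})$ inside $H^0(E_i,\Omega_{E_i})$, after which iterates of the Cartier operator kill it.
\end{itemize}
You instead pass to the normalization. This requires $F$ to be bijective both on the node terms $W_n(k(P))$ and on $H^0(\tilde E,W_n\sO_{\tilde E})$; both hold since the relevant fields are perfect. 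It also uses that $H^1(\tilde E,W\sO_{\tilde E})$ is torsion-free with $VF=p$. Only left exactness of $\varprojlim$ is needed there, so no Mittag--Leffler argument is required.
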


\begin{proof}
(1)  
Set $B := (1/2)E$, which is $\pi$-anti-ample.  
By the case $e=1$ of the argument used in the proof of Theorem~\ref{rational-to-qFrat} (see also \cite{KTTWYY1}*{Theorem~5.13}),
\[
H^2_\m(\mathcal{O}_Y(B)) \longrightarrow H^2_\m(Q_{Y,B,m})
\]
is injective for some integer $m \ge 1$.  
Therefore, the commutative diagram
\[
\begin{tikzcd}
    H^2_\m(\mathcal{O}_Y) \arrow[r] \arrow[d,equal] &
    H^2_\m(Q_{Y,m}) \arrow[d] \\
    H^2_\m(\mathcal{O}_Y(B)) \arrow[r] &
    H^2_\m(Q_{Y,B,m}),
\end{tikzcd}
\]
shows that the top horizontal map $H^2_\m(\mathcal{O}_Y) \longrightarrow H^2_\m(Q_{Y,m})$ is injective. 
Set
\[
K_n^Y:=\Ker\left(H^2_{\m}(W_n\mathcal O_Y)\xrightarrow{F}
F_*H^2_{\m}(W_n\mathcal O_Y)\right)
\]
for each integer $n\ge 1$. 
The injectivity of $H^2_\m(\mathcal{O}_Y) \longrightarrow H^2_\m(Q_{Y,m})$
means that the restriction map $K_m^Y\to K_1^Y$ is zero. 
Moreover, Grauert–Riemenschneider vanishing gives $R^1\pi_*\omega_Y=0$, and hence $H^1_\m(\mathcal O_Y)=0$. 
Therefore the exact sequences
\[
0\to F_*H^2_\m(W_{r-1}\mathcal O_Y)
\xrightarrow{V}
H^2_\m(W_r\mathcal O_Y)
\xrightarrow{R^{r-1}}
H^2_\m(\mathcal O_Y)\to 0
\]
are available for all $r\ge 1$. 
Using these exact sequences and the relations among $F$, $V$ and $R$, the same induction argument as in the proof of Proposition~\ref{cor:qFinj-to-qF^einj} shows that the map $K_{m+n}^Y\to K_{n+1}^Y$ is zero for every integer $n\ge 0$. 
Passing to the inverse limit, we obtain $\varprojlim_n K_n^Y=0$. 
This is precisely the desired injectivity of
\[
F \colon \varprojlim_n H^2_\m(W_n\mathcal O_Y)
\to
\varprojlim_n F_*H^2_\m(W_n\mathcal O_Y).
\]

(2)  
For an $\F_p$-scheme $X$, we define the $W_n\mathcal{O}_X$-module $B_{X,n}$ to be the cokernel of the Frobenius map $F \colon W_n \sO_X \to F_*W_n\sO_X$.  
It then suffices to show that $\varprojlim_n H^0(E, B_{E,n}) = 0$.  
Write $E=\sum_{i=1}^r E_i $ for the decomposition into irreducible components.  
By the proof of \cite{STY}*{Claim~6.11}\footnote{In \cite{STY}*{Claim~6.11}, the residue field is assumed to be infinite, but this assumption is unnecessary in the no boundary case.}, we have an isomorphism 
\[
B_{E,n} \simeq \bigoplus_{i=1}^r B_{E_i,n}.
\]
Since each $E_i$ is a smooth projective curve over a perfect field $k$, we can consider the Cartier operator $C$ on $E_i$: 
\[
0 \to B_1 \Omega_{E_i} \to F_*\Omega_{E_i} \xrightarrow{C} \Omega_{E_i} \to 0,
\]
where $B_1 \Omega_{E_i}$ is the image of $d \colon F_*\sO_{E_i} \to F_*\Omega_{E_i}$. 
Setting 
\[
C^n:=C \circ F_*C \circ \cdots \circ F^{n-1}_*C \colon F^n_*\Omega_{E_i} \to \Omega_{E_i}, 
\] 
we define the $\sO_{E_i}$-submodule $B_n\Omega_{E_i}$ of $F^n_*\Omega_{E_i}$ by $B_n\Omega_{E_i}=\ker C^n$. 
The $k$-vector space $H^0(E_i, \Omega_{E_i})$ is finite-dimensional, so the increasing sequence 
\[\{H^0(E_i, B_n\Omega_{E_i})\}_{n \ge 1}\] 
stabilizes. 
That is, there exists an integer $N \ge 1$ such that 
\[
V:=H^0(E_i, B_N\Omega_{E_i})=H^0(E_i, B_n\Omega_{E_i})
\]
for all $n\geq N$.
Moreover, by a result of Serre (cf.~ 
\cite{Serre58}*{Lemme 2 in Section 7} or 
\cite{illusie_de_rham_witt}*{Ch.\ I, Remarques 3.12(a)}), there exists an isomorphism
\[
B_{E_i,n} \cong B_n\Omega_{E_i}. 
\]
Under this isomorphism, the transition map $B_{E_i,n+1}\to B_{E_i,n}$ corresponds to the map induced by the Cartier operator
\[
F^n_*C \colon B_{n+1}\Omega_{E_i}\to B_n\Omega_{E_i}.
\]
Since $V=H^0(E_i,B_N\Omega_{E_i})$ is killed by the map induced by $C^N$, the $N$-th iterate of the transition map is zero on $V$. 
Hence $\varprojlim_n H^0(E_i,B_{E_i,n})=0$ for all $i=1, \dots, r$, which implies that $\varprojlim_n H^0(E,B_{E,n})=0$. 
\end{proof}

\begin{theorem}\label{thm:DuBois-to-qFinj}
With the notation as in Setting~\ref{notation:qFinj}, assume that $d = 2$ and the residue field $A/\m$ is perfect.
Then $A$ is quasi-$F$-injective if and only if $\Spec A$ has pseudo-Du~Bois singularities. 
\end{theorem}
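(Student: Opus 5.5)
The ``only if'' direction should follow directly from Proposition~\ref{qFinj-to-Du-Bois}. A two-dimensional normal local ring is Cohen--Macaulay, and by the footnote to that proposition the ``essentially of finite type'' hypothesis is not needed in dimension at most three. So the real content is the converse. For it I plan to verify condition (iii) of Proposition~\ref{cor:qFinj-to-qF^einj}, namely that $F$ is injective on $\varprojlim_n H^2_\m(W_nA)$. The idea is to transport this injectivity from the log resolution, where Lemma~\ref{qFinj-resolution} provides exactly the two injectivity statements needed.

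Fix a log resolution $\pi\colon Y\to X=\Spec A$ with reduced exceptional divisor $E$, and let $I=\sO_Y(-E)$. Set $W_n(I):=\Ker(W_n\sO_Y\to W_n\sO_E)$; this is the ideal of Witt vectors with all entries in $I$, and it sits in an exact sequence
\[
0\to F_*W_{n-1}(I)\xrightarrow{V} W_n(I)\xrightarrow{R^{n-1}} I\to 0 .
\]

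\textbf{Step 1 (main obstacle).} I will show that the natural map $\psi_n\colon H^2_\m(W_nA)\to H^2_\m(W_n(I))$ is injective for every $n$. The base case $n=1$ comes from pseudo-Du~Bois: $\pi_*\omega_Y(E)=\omega_A$, and by Matlis duality (Lemma~\ref{lem:local duality}) this makes $H^2_\m(A)\to H^2_\m(\sO_Y(-E))$ an isomorphism. For the inductive step, compare the $V$/$R$ sequences for $W_nA$ and for $W_n(I)$. We have $H^1_\m(A)=0$ by Cohen--Macaulayness, and $H^1_\m(I)=0$ because it is dual to $R^1\pi_*\omega_Y(E)$, which vanishes for a log resolution of a surface by the negative definiteness of the intersection matrix. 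Hence both rows are short exact. Now take $x\in\Ker\psi_n$. Then $R^{n-1}(x)$ maps to $0$ in $H^2_\m(I)$, so $R^{n-1}(x)=0$ and we can write $x=V(F_*y)$. Injectivity of $V$ on the bottom row then gives $\psi_{n-1}(y)=0$, so $y=0$ by induction, and hence $x=0$. The vanishing of $R^1\pi_*\omega_Y(E)$ is the point I expect to need the most care, possibly requiring an argument via $0\to\omega_Y\to\omega_Y(E)\to\omega_E\to 0$ and Grauert--Riemenschneider vanishing.

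\textbf{Step 2.} Pass to inverse limits and chase Frobenius. Grauert--Riemenschneider vanishing gives $H^1_\m(\sO_Y)=0$, and induction along $V$/$R$ then gives $H^1_\m(W_n\sO_Y)=0$. Since $E$ is the full fibre, $H^2_\m(W_n\sO_E)=0$. Therefore
\[
0\to H^1(E,W_n\sO_E)\to H^2_\m(W_n(I))\to H^2_\m(W_n\sO_Y)\to 0
\]
is exact for every $n$. The left-hand terms have finite length, so the Mittag--Leffler condition holds and the sequence stays exact after $\varprojlim_n$; all maps are compatible with $F$. Steps 1 and 2 together also give $\varprojlim_n\psi_n$ injective, by left exactness of $\varprojlim$.

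\textbf{Step 3.} Let $\xi\in\varprojlim_n H^2_\m(W_nA)$ with $F(\xi)=0$. Its image in $\varprojlim_n H^2_\m(W_n\sO_Y)$ is killed by $F$, so it vanishes by Lemma~\ref{qFinj-resolution}(1). Hence the image of $\xi$ in $\varprojlim_n H^2_\m(W_n(I))$ comes from some $\eta\in\varprojlim_n H^1(E,W_n\sO_E)$. The element $F(\eta)$ maps to $F$ of the image of $\xi$, which is $0$, and the map on the left is injective, so $F(\eta)=0$. Lemma~\ref{qFinj-resolution}(2), which uses that $A/\m$ is perfect, then gives $\eta=0$. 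Thus the image of $\xi$ vanishes, and $\xi=0$ by injectivity of $\varprojlim_n\psi_n$. Proposition~\ref{cor:qFinj-to-qF^einj} then shows that $A$ is quasi-$F$-injective.
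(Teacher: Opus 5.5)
Step~1 rests on a false claim. You assert $H^1_\m(I)=0$, equivalently $R^1\pi_*\omega_Y(E)=0$, but this fails whenever $E\neq\emptyset$. From $0\to I\to\sO_Y\to\sO_E\to0$ and $H^0_\m(\sO_Y)=0$ you get an injection $H^0(E,\sO_E)\hookrightarrow H^1_\m(I)$. Likewise, the sequence $0\to\omega_Y\to\omega_Y(E)\to\omega_E\to0$ together with Grauert--Riemenschneider vanishing, which you propose to use, gives $R^1\pi_*\omega_Y(E)\cong H^1(E,\omega_E)\neq0$; for the blow-up of a smooth point this is $H^1(\mathbb{P}^1,\sO(-2))$. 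So your stated reason for the injectivity of $V\colon H^2_\m(F_*W_{n-1}(I))\to H^2_\m(W_n(I))$, on which your induction depends, is wrong.

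The injectivity itself is nevertheless true. It only needs $R^{n-1}\colon H^1_\m(W_n(I))\to H^1_\m(I)$ to be surjective. Since $H^0_\m(W_n\sO_Y)=H^1_\m(W_n\sO_Y)=0$ (the latter from your Step~2), the connecting maps identify this with $W_n\bigl(H^0(E,\sO_E)\bigr)=H^0(E,W_n\sO_E)\to H^0(E,\sO_E)$, which is surjective. With this repair Step~1 goes through, and the five lemma even shows that each $\psi_n$ is an isomorphism. You should also say what $\psi_n$ is. Since $\pi_*W_n(I)=W_n(\m)$ rather than $W_nA$, take $H^2_\m(W_nA)\cong H^2_\m(W_n(\m))\to H^2_\m(W_n(I))$; the first map is an isomorphism because $W_n(A/\m)$ has finite length. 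This $\psi_n$ is visibly compatible with $F$, $V$ and $R$.

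Once repaired, your proof is correct but routed differently from the paper's. The paper never compares $W_nA$ with $W_n(I)$ above level one. It proves injectivity of $F$ on $\varprojlim_n H^2_\m(W_n(I))$ by the same snake-lemma argument as your Step~3. It then descends on $Y$ to injectivity of $H^2_\m(I)\to H^2_\m(Q^E_{Y,n})$ for a single $n$, via the Mittag--Leffler argument of Claim~\ref{prop:infty-qFinj}. Finally it transfers this to $A$ using only the pseudo-Du~Bois isomorphism $H^2_\m(A)\cong H^2_\m(I)$ and the universal property of the pushout $Q_{A,n}$.

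You instead transport the inverse-limit injectivity to $A$ and let Proposition~\ref{cor:qFinj-to-qF^einj}, (iii)$\Rightarrow$(i), do the finite-level descent. This costs the level-wise comparison of Step~1, which is exactly where the error crept in. In return it yields the by-product $\varprojlim_n H^2_\m(W_nA)\cong\varprojlim_n H^2_\m(W_n(I))$.
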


\begin{proof}
Proposition~\ref{qFinj-to-Du-Bois} proves the ``only if'' direction.
It remains to prove the ``if'' direction. 
Let $\pi \colon Y \to \Spec A$ be a log resolution and $E$ be the reduced exceptional divisor of $\pi$. 
Let $\mathcal{I}_E \subseteq \sO_Y$ denote the ideal sheaf of $E$. 
The $W_n\mathcal{O}_Y$-submodule $W_n\mathcal{I}_E$ of the constant sheaf $W_n\mathscr{K}_Y$ is defined as follows: 
for each open subset $U \subseteq Y$, we set 
\begin{align*}
\hspace*{3em} \Gamma(U, W_n\mathcal{I}_E)
&\coloneqq 
\bigl\{\,(\varphi_0,\ldots,\varphi_{n-1}) 
 \bigm| 
\varphi_i \in \Gamma(U,\mathcal{I}_E)
\text{ for all } i \,\bigr\} \\
&\subseteq \Gamma(U, W_n\mathscr{K}_Y).
\end{align*}
We define the $W_n\sO_Y$-module $Q^{E}_{Y,n}$ and the $W_n\sO_Y$-linear map $\Phi^E_{Y,n} \colon \mathcal{I}_E \to Q^{E}_{Y,n}$ by the pushout diagram
\[
\begin{tikzcd}
W_n\mathcal{I}_E \arrow[r,"F"] \arrow[d,"R^{n-1}"'] &
F_*\bigl(W_n\mathcal{I}_E \bigr) \arrow[d] \\
\mathcal{I}_E \arrow[r,"\Phi^{E}_{Y,n}"] &
Q^{E}_{Y,n}, 
\end{tikzcd}
\]
and for each integer $n \ge 1$, set $K^{Y,E}_n:=\Ker\left(H^2_{\m}(W_n\mathcal{I}_E) \xrightarrow{F} F_*H^2_{\m}(W_n\mathcal{I}_E)\right)$. 
We have the following commutative diagram with exact rows:
\[
\begin{tikzcd}[column sep=0.4cm]
    0 \arrow[r] 
      & \varprojlim_n H^1_\m(W_n\mathcal{O}_E) \arrow[r] \arrow[d,"F_E"] 
      & \varprojlim_n H^2_\m(W_n\mathcal{I}_E) \arrow[r] \arrow[d,"F_{Y,E}"] 
      & \varprojlim_n H^2_\m(W_n \mathcal{O}_Y) \arrow[r] \arrow[d,"F_Y"] 
      & 0 \\
    0 \arrow[r] 
      & \varprojlim_n F_*H^1_\m(W_n\mathcal{O}_E) \arrow[r] 
      & \varprojlim_n F_*H^2_\m(W_n\mathcal{I}_E) \arrow[r] 
      & \varprojlim_n F_*H^2_\m(W_n\mathcal{O}_Y) \arrow[r] 
      & 0. 
\end{tikzcd}
\]
Since $F_Y$ and $F_E$ are injective by Lemma~\ref{qFinj-resolution}, the Snake Lemma shows that $F_{Y,E}$ is also injective, 
and the injectivity of $F_{Y,E}$ implies that $\varprojlim_n K^{Y,E}_n=0$. 
Then, by an argument similar to the proof of Claim~\ref{prop:infty-qFinj}, the truncation $K^{Y,E}_n \to K^{Y,E}_1$ is the zero map for some $n \ge 1$, 
which is equivalent to the injectivity of the map 
\[
H^2_\m(\mathcal{I}_E) \longrightarrow H^2_\m(Q^E_{Y,n}). 
\]

Finally, by the universal property of the pushout obtained by applying $H^2_\m(-)$ to the defining pushout diagram of $Q_{A,n}$, we have the following commutative diagram:
\[
\begin{tikzcd}
H^2_\m(A) \arrow[r] \arrow[d,"\simeq"'] 
  & H^2_\m(Q_{A,n}) \arrow[d] \\
H^2_\m(\mathcal{I}_E) \arrow[r,hookrightarrow] 
  & H^2_\m(Q^E_{Y,n}),
\end{tikzcd}
\]
where the left vertical isomorphism follows from the assumption that $\Spec A$ has pseudo-Du~Bois singularities.
Therefore, the top horizontal map 
\[
H^2_\m(A) \longrightarrow H^2_\m(Q_{A,n}) 
\]
is injective. 
\end{proof}

We conclude this subsection with a remark on the higher-dimensional case. 
Cascini--Tanaka--Witaszek \cite{CTW15a} showed that for every prime number $p$, there exists a three-dimensional canonical singularity over an algebraically closed field of characteristic $p$ that is not $F$-pure.
On the other hand, we have the following theorem.

\begin{theorem}[\textup{\cite{KTTWYY3}*{Theorem B}; see also \cite{KTTWYY1}, \cite{KTTWYY2}}]\label{thm:quasi-F-split of 3-dim klt}
Let $A$ be a 3-dimensional $\Q$-factorial normal local ring essentially of finite type over a perfect field of characteristic $p>41$.
If $\Spec A$ has klt singularities, then $A$ is quasi-$F$-regular. 
\end{theorem}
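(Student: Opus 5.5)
This theorem is quoted from \cite{KTTWYY3}*{Theorem B}, building on \cite{KTTWYY1} and \cite{KTTWYY2}, so I only sketch the strategy. The plan is to reduce quasi-$F$-regularity to a global quasi-$F$-splitting statement on a Koll\'ar component, using inversion of adjunction. By Remark~\ref{remark:quasi-F-singularities}(2), it suffices to show the following: for every effective Cartier divisor $D$ on $X=\Spec A$ there is an $e\ge 1$ such that $(A,(1/p^e)D)$ is quasi-$F^e$-split. Since $X$ is $\Q$-factorial and klt, and $p>5$, the three-dimensional MMP is available. We would therefore extract a Koll\'ar component $g\colon Y\to X$ with exceptional prime divisor $S$ such that $(Y,S)$ is plt and $-(K_Y+S)$ is $g$-ample. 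Perturbing slightly, we would arrange the same with the boundary $g^*((1/p^e)D)$ added, which is possible because $\varepsilon D$ does not destroy klt-ness for small $\varepsilon$.

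The next step is to apply the global inversion of adjunction for quasi-$F$-splitting along plt blow-ups, namely \cite{KTTWYY1}*{Theorem~4.6 and Corollary~4.12}, as recalled in the remark preceding Definition~\ref{def:test-submod}. This reduces quasi-$F^e$-splitting of $(A,\Delta)$ near the closed point to a quasi-$F^e$-splitting statement for the log del Pezzo pair $(S,\Delta_S)$ obtained by adjunction, $K_S+\Delta_S=(K_Y+S+g^*\Delta)|_S$. The coefficients of $\Delta_S$ lie in a hyperstandard set, up to the small perturbation. One point must be checked here: the lifting hypotheses in \cite{KTTWYY1}. These are vanishing conditions for $W_n$-sheaves twisted by $-(K_Y+S)$, which should follow from the $g$-ampleness by relative Serre vanishing, exactly as in the proof of Theorem~\ref{rational-to-qFrat}.

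The main obstacle is the surface step. We need every log del Pezzo pair $(S,\Delta_S)$ with standard-type coefficients over a perfect field of characteristic $p>41$ to be (uniformly in $e$) quasi-$F^e$-split, and stably so under adding $(1/p^e)$ times an effective divisor. This is the content of \cite{KTTWYY2}. I would approach it as follows. Run a $K_S$-MMP to reduce to a minimal model, either a Picard-rank-one model or a Mori fibre space over a curve. Then use a log Calabi--Yau-type criterion: quasi-$F$-splitting follows from the vanishing of $H^0(S,\Omega^1_S(\log E)(-K_S-\Delta_S))$-type cohomology together with bounds on log canonical thresholds, as in \cite{KTTWYY1}*{Theorem~5.13}. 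The delicate part is to exclude the finitely many exceptional configurations, meaning special curves in $\Delta_S$ and bad singularities of $S$. The bound $p>41$ should enter precisely here, through a Bogomolov-type or ACC-type numerical inequality on the coefficients ($1/2+1/3+1/7$ extremal configurations give the threshold $42$). Finally, we would pass from quasi-$F^e$-splitting of the perturbed pairs for all $D$ back to quasi-$F$-regularity of $A$ via Remark~\ref{remark:quasi-F-singularities}(2).
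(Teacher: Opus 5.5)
The paper gives no proof of this theorem. It cites \cite{KTTWYY3}*{Theorem~B}, and the remark before Definition~\ref{def:test-submod} says that the inversion of adjunction of \cite{KTTWYY1}*{Theorem~4.6 and Corollary~4.12} for plt blow-ups and Koll\'ar components is used in the proof. Your outline follows that route: extract a Koll\'ar component, lift quasi-$F^e$-splitting from the log del Pezzo pair on $S$, and take the surface input in characteristic $p>41$ from \cite{KTTWYY2}. Two details need correcting.

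\emph{Strict transform, not pullback.} You must add the strict transform $g^{-1}_*D$, not $g^*D$. Since $D=\operatorname{div}(c)$ with $c\in\m$ and $S$ lies over the closed point, $g^*D$ contains $S$ with positive multiplicity. So $S+g^*(\frac{1}{p^e}D)$ has coefficient $>1$ along $S$, the pair is not plt, and your formula for $\Delta_S$ no longer defines a boundary. With $g^{-1}_*D$ instead, $-(K_Y+S+\frac{1}{p^e}g^{-1}_*D)$ stays $g$-ample for $e\gg 0$ because ampleness is open. The pair stays plt near $S$, and $\Delta_S$ is its different: standard coefficients plus a small perturbation, as you intend.

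\emph{Sign of the twist in the surface criterion.} The log-form vanishing in the quasi-$F$-splitting criterion is a Bogomolov--Sommese-type condition. It requires twists by multiples of an anti-ample divisor, as in condition (i) of the proof of Theorem~\ref{rational-to-qFrat}, namely $H^0(\Omega^1(\log E)(p^cB))$ with $B$ anti-ample; for a log del Pezzo pair this means essentially $p^c(K+\Delta)$ on a log resolution. Twisted by the ample divisor $-(K_S+\Delta_S)$, as you wrote, these groups essentially never vanish; already $H^0(\mathbb{P}^2,\Omega^1_{\mathbb{P}^2}(3))\neq 0$.

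Finally, uniformity in $e$ is not needed. Remark~\ref{remark:quasi-F-singularities}(2) only asks, for each $D$, for some $e$ depending on $D$.
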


\begin{remark}\label{remark:higher-dimensional case}
We collect some related results and consequences. 
\begin{enumerate}
    \item The assumption on $p$ is optimal: 
    there exists a 3-dimensional $\Q$-factorial klt singularity over a perfect field of characteristic $41$ that is not quasi-$F$-split (see \cite{KTTWYY2}*{Theorem B}). 
    \item For each prime $p$, there exists a 4-dimensional $\Q$-factorial canonical singularity that is not quasi-$F$-split (see \cite{TY26}).
    \item By considering the affine cone over a supersingular K3 or abelian surface, 
    one can construct a 3-dimensional non-quasi-$F$-split lc singularity in every characteristic $p>0$. 
    See Example~\ref{example:non-qFs} for an explicit example of a non-quasi-$F$-split lc singularity. 
    \item  
    Combining Theorem~\ref{thm:quasi-F-split of 3-dim klt} and \cite{Kaw4}, we can prove the logarithmic extension theorem for one-forms for klt threefolds over a perfect field of characteristic $p>41$ (see \cite{KTTWYY2}, \cite{Kawakami-Sato}). 
    We can also obtain the Steenbrink vanishing theorem for them as an application of Theorem~\ref{thm:quasi-F-split of 3-dim klt} (see \cite{Kaw7}). 
\end{enumerate}
\end{remark}

\section{Fedder-type criteria for quasi-\texorpdfstring{$F$}{F}-singularities}

As discussed above, 
quasi-$F$-singularities provide a closer positive-characteristic counterpart to singularities in birational geometry than classical $F$-singularities do. 
Thus, it is natural to ask how a given singularity can be classified in terms of quasi-$F$-singularities. 
When the singularity is a hypersurface, we discuss criteria analogous to Propositions~\ref{Fedder-Fpure} and~\ref{Fedder-fpt}.

\subsection{A criterion for quasi-$F$-splitting}
In this subsection, we review a criterion for quasi-$F$-splitting that generalizes Fedder's criterion for $F$-purity (Proposition~\ref{Fedder-Fpure}).
Suppose that $A$ is a hypersurface singularity in characteristic zero. 
If its reduction $A_p$ modulo $p$ is quasi-$F$-split for a single prime $p$, then it follows from \cite{p-pure}*{Proposition~5.13} and \cite{Yoshikawa25}*{Theorem~A} (see also \cite{Zhu17}*{Corollary 4.2} and \cite{ST25}*{Theorem~A}) that $\Spec A$ has lc singularities. 
Thus, checking quasi-$F$-splitting for a small prime $p$ provides a sufficient condition for log canonicity of hypersurface singularities in characteristic zero. 
This illustrates the usefulness of such a criterion. 

\begin{setting}\label{notation:fedder-qFs}
Let $S \coloneqq k[x_1,\ldots,x_N]$ be the polynomial ring in $N$ variables over a perfect field $k$ of characteristic $p>0$. 
Let $\mathfrak{m}\coloneqq (x_1,\ldots,x_N)$ be the maximal ideal of $S$ corresponding to the origin, and let $\m^{[p^n]}$ denote its $n$-th Frobenius power, that is, the ideal $(x_1^{p^n},\ldots,x_N^{p^n})$ for every integer $n \geq 1$. 
Note that $F_*S$ is a free $S$-module with basis 
\[
\bigl\{\,F_*(x_1^{i_1}\cdots x_N^{i_N}) \,\bigm|\, 0 \le i_1,\ldots,i_N \le p-1 \,\bigr\}. 
\]
The \emph{Frobenius trace map} $\mathrm{Tr} \colon F_*S \to S$ is the $S$-module homomorphism that sends   
$F_*((x_1\cdots x_N)^{p-1})$ to $1$ and all other basis elements to zero. 
Moreover, $\Hom_S(F_*S,S)$ is a free $F_*S$-module of rank one generated by $\mathrm{Tr}$. 

Let $f \in \m$ be a nonzero element.
Write $f=\sum_{j=1}^r c_j M_j$, where $c_j \in k^{\times}$ and the \(M_j\) are distinct monomials. 
Set 
\[
\Delta(f)
:= \sum_{\substack{0 \le \alpha_1,\ldots,\alpha_r \le p-1\\ \alpha_1+\cdots+\alpha_r=p}}
\frac{1}{p}\binom{p}{\alpha_1,\ldots,\alpha_r} (c_1M_1)^{\alpha_1}\cdots (c_rM_r)^{\alpha_r},
\]
where \(\frac{1}{p}\binom{p}{\alpha_1,\ldots,\alpha_r}\) denotes the image in \(k\) of the integer
\(\binom{p}{\alpha_1,\ldots,\alpha_r}/p\). 
We then define an $S$-linear map $\theta_f \colon F_*S \longrightarrow S$ as the composite 
\[
\theta_f \colon F_*S \xrightarrow{\cdot F_*(f^{p(p-2)}\Delta(f))} F_*S \xrightarrow{\mathrm{Tr}} S,
\]
where $F_*S \xrightarrow{\cdot F_*(f^{p(p-2)}\Delta(f))} F_*S$ denotes multiplication by $F_*(f^{p(p-2)}\Delta(f))$. 
We also define a sequence of ideals $\{\fI{n}{f}\}_{n\ge1}$ inductively by 
\begin{align*}
\fI{1}{f} 
&\coloneqq (f^{p-1}),\\
\fI{n+1}{f} 
&\coloneqq \theta_f\Bigl(F_*\fI{n}{f} \cap \Ker \mathrm{Tr} \Bigr)+(f^{p-1}).
\end{align*}
We note that the sequence $\{I_n(f)\}_{n \geq 1}$ is increasing.
Indeed, $I_2(f)$ contains $I_1(f)$ by definition.
Suppose that $I_{n-1}(f) \subseteq I_n(f)$. 
Then
\begin{align*}
I_{n+1}(f) 
&=\theta_f\Bigl(F_*\fI{n}{f} \cap \Ker \mathrm{Tr} \Bigr)+(f^{p-1}) \\
&\supseteq  \theta_f\Bigl(F_*\fI{n-1}{f} \cap \Ker \mathrm{Tr} \Bigr)+(f^{p-1})=I_n(f).    
\end{align*}
Therefore, by induction, the sequence $\{I_n(f)\}_{n \geq 1}$ is increasing. 
\end{setting}

\begin{remark}
In \cite{KTY22}*{Theorem~4.11}, the homomorphism $\theta$ is defined by
\[
\theta \colon F_*S \xrightarrow{\cdot F_*\Delta(f^{p-1})} F_*S \xrightarrow{\mathrm{Tr}} S.
\]
On the other hand, \cite{KTY22}*{Proposition~3.8(4)} shows that 
\[
\Delta(f^{p-1})
=
-f^{p(p-2)}\Delta(f)
\quad
\textup{in $S/F(S)$}.
\]
Consequently, $\theta_f$ gives the same sequence of ideals as the one appearing in \cite{KTY22}*{Theorem~4.11}.
For computational purposes, it is more efficient to use
$f^{p(p-2)}\Delta(f)$ instead of $\Delta(f^{p-1})$, since the former is easier to compute.
\end{remark}

\begin{definition}[\cite{yobuko19}]
The \emph{quasi-$F$-split height} $\sht(A)$ of an $F$-finite local domain $A$ is defined as the infimum of integers $n \ge 1$ such that $A$ is $n$-quasi-$F$-split. 
If $A$ is not quasi-$F$-split, we set $\sht(A)=\infty$. 
\end{definition}

The quasi-$F$-split height of the hypersurface  singularity $(S/f)_\m$ can be characterized in terms of the ideals $\{I_n(f)\}_{n \ge 1}$. 
\begin{theorem}[\cite{KTY22}*{Theorem~4.11}]\label{fedder-qFs}
With the notation as in Setting~\ref{notation:fedder-qFs}, we have 
\[
\sht\bigl((S/f)_\m\bigr)
=\inf\bigl\{n \ge 1 \bigm| \fI{n}{f} \nsubseteq \m^{[p]} \bigr\}.
\]
\end{theorem}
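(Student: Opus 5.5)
The plan is to reduce $n$-quasi-$F$-splitting of $R:=(S/f)_\m$ to a Fedder-type condition on $S$ by lifting splittings along the surjection $W_nS\to W_nR$. Recall that $R$ is $n$-quasi-$F$-split if and only if the evaluation map
\[
\Hom_{W_nR}(F_*W_nR,R)\longrightarrow R,\qquad \varphi\mapsto \varphi(F_*1),
\]
has image not contained in $\m R$, since $R$ is local. We treat $n=1$ first as a sanity check. There $\Hom_R(F_*R,R)$ is identified, as in Fedder's proof, with $\{\mathrm{Tr}(F_*(g\,\cdot)) \mid g\in (f^{[p]}:f)=(f^{p-1})\}$ modulo $f$. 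The image contains a unit if and only if $\mathrm{Tr}(F_*(gf^{p-1}))\notin\m$ for some $g$, which is equivalent to $f^{p-1}\notin\m^{[p]}$. This matches $I_1(f)=(f^{p-1})$.

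For general $n$, we use the exact sequence $0\to F_*W_{n-1}S\xrightarrow{V}W_nS\xrightarrow{R^{n-1}}S\to0$ and its analogue for $R$. The aim is an inductive description of the set
\[
J_n:=\{\varphi(F_*1)\mid \varphi\in\Hom_{W_nS}(F_*W_nS,S),\ \varphi \text{ descends to } W_nR\}\subseteq S,
\]
modulo $(f)$, showing $J_n+(f)=I_n(f)+(f)$ up to terms irrelevant modulo $\m^{[p]}$. A homomorphism $\varphi\colon F_*W_nS\to S$ is determined by its restriction to $F_*V(F_*W_{n-1}S)$, which is a map $F^2_*W_{n-1}S\to S$, together with its values on Teichmüller lifts $F_*[a]$. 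Compatibility with the Witt addition law forces the value on $[a+b]-[a]-[b]$, whose first ghost-corrected component is the polynomial $\Delta$, to agree with the $V$-part. Concretely, $\varphi([a]+[b])$ differs from $\varphi([a+b])$ by the $V$-part evaluated at $-\Delta(a,b)$.

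We then run the descent condition to $W_nR$: $\varphi$ must kill $F_*W_n(fS)$, i.e. $F_*[fs]$ and $V$-translates. Writing the first component of $\varphi$ as $\mathrm{Tr}(F_*(g\,\cdot))$ with $g\in(f^{p-1})$ plus a correction from level $n-1$, the obstruction to extending a level-$(n-1)$ map in $\Ker\mathrm{Tr}$ is measured by $\Delta(f^{p-1})$. Here we use the remark that $\Delta(f^{p-1})\equiv -f^{p(p-2)}\Delta(f)$ modulo $F(S)$, so $\theta_f$ encodes exactly this. This yields the recursion $J_{n+1}=\theta_f(F_*J_n\cap\Ker\mathrm{Tr})+(f^{p-1})$ modulo $\m^{[p]}$. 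The theorem then follows: $\sht(R)\le n$ if and only if $J_n\not\subseteq\m^{[p]}$, which by the recursion is equivalent to $I_n(f)\not\subseteq\m^{[p]}$. Localization and completion are harmless because everything is compatible with flat base change.

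The main obstacle is the middle step: making precise how a $W_nS$-linear map out of $F_*W_nS$ decomposes and verifying that the descent condition produces exactly the intersection with $\Ker\mathrm{Tr}$ and the twist by $\Delta$. The first thing to try is to prove that $Q_{S,n}$ splits as a direct sum of $F_*S$ and $F_*B_{S,n-1}$-type pieces; since $S$ is regular, $Q_{S,n}$ should be computable by induction on $n$. One then dualizes, using $\Hom_S(F_*S,S)=F_*S\cdot\mathrm{Tr}$, and tracks how multiplication by $[f]$ acts through the Witt addition polynomials. I expect the bookkeeping with Witt addition, namely why only $\Delta$ at the first level matters modulo $\m^{[p]}$, to be the delicate point.
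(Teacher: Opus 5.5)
The paper does not prove this statement; it quotes it from KTY22, Theorem~4.11. There is therefore no argument to compare yours with. Judged on its own, your proposal has a genuine gap. The whole content of the theorem is the passage from level $n$ to level $n+1$, and you assert it (``this yields the recursion'') rather than derive it, as you concede at the end.

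What is needed is an explicit description of $\Hom_{W_nS}(F_*W_nS,S)=\Hom_S(Q_{S,n},S)$, where $Q_{S,n}=F_*(W_nS/p)$. You also need to translate the descent condition, which is $\varphi(F_*W_n(fS))\subseteq fS$. It is not ``$\varphi$ kills $F_*W_n(fS)$''; that would force $g=0$ already for $n=1$.

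Already for $n=2$ this computation produces every ingredient, and not in the way you describe:
\begin{itemize}
\item Every $W_2S$-linear $\varphi$ has the form $\varphi(F_*(a_0,a_1))=\mathrm{Tr}(F_*(ga_0))+\mathrm{Tr}^2(F^2_*(h(\Delta(a_0)+a_1)))$ for some $g,h\in S$, with $\Delta$ computed from the monomial expansion as in the Setting.
\item Linearity forces $\varphi$ to vanish on $F_*(pW_2S)=F_*\{(0,c^p)\}$, and this is exactly the condition $\mathrm{Tr}(F_*h)=0$. That is where $\Ker\mathrm{Tr}$ comes from. It is also what makes the $p$-th power error in $\Delta(fa)\equiv f^p\Delta(a)+a^p\Delta(f)$ (mod $S^p$) harmless.
\item Testing descent on the elements $(0,fb)$ gives $h=f^{p^2-1}h'$. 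So $\mathrm{Tr}(F_*h)=0$ becomes $\mathrm{Tr}(F_*(f^{p-1}h'))=0$.
\item Testing descent on $[fa]$ gives $g\equiv-\theta_f(F_*(f^{p-1}h'))$ modulo $(f^{p-1})$.
\item Finally, $\varphi(F_*1)=\mathrm{Tr}(F_*g)$.
\end{itemize}

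For general $n$ one can show that $\varphi(F_*V(F_*x))=\mathrm{Tr}(F_*\varphi'(F_*x))$ for a $W_{n-1}S$-linear map $\varphi'\colon F_*W_{n-1}S\to S$ with $\varphi'(F_*1)=0$. This is what makes an induction possible. You would still have to determine what descent of $\varphi$ imposes on $\varphi'$. You would also have to show that the higher Witt components of $[f]-\sum_j[c_jM_j]$ do not affect the outcome. That is exactly the bookkeeping you leave open.

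You also use, without comment, that every map on $W_nR$ lifts to $W_nS$. This requires $Q_{S,n}$ to be projective over $S$. One way to see it is from the sequence $0\to F_*B_{S,n-1}\to Q_{S,n}\to F_*S\to 0$ together with local freeness of $B_{S,n-1}\cong B_{n-1}\Omega^1_S$.

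Second, the objects in your recursion have the wrong type. Your $J_n$ consists of values $\varphi(F_*1)$, so the correct criterion for it is $J_n\not\subseteq\m$, not $J_n\not\subseteq\m^{[p]}$. The ideal $\fI{n}{f}$ plays the role of the first-level coefficients $g$, as in your own $n=1$ check and in the $n=2$ computation above. The ideal $\m^{[p]}$ enters only through $\varphi(F_*1)=\mathrm{Tr}(F_*g)$ and the equivalence $\mathrm{Tr}(F_*I)\not\subseteq\m\iff I\not\subseteq\m^{[p]}$.

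With $J_n$ as you defined it, the recursion fails already for $f=x_1$. There $J_1=J_2=S$. But $\Delta(x_1)=0$ makes $\theta_f=0$, so the right-hand side is $(x_1^{p-1})$, which differs from $S$ even modulo $\m^{[p]}$.

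Likewise, comparing ideals modulo $(f)$ discards exactly the relevant information. Indeed $\fI{1}{f}+(f)=(f)$ for every $f$, whereas whether $\fI{1}{f}\subseteq\m^{[p]}$ is precisely the $F$-purity question.
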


\begin{remark}\label{comp-delta}
For each integer $e \ge 1$, set 
\begin{align*}
\mathrm{Tr}^e & \colon F^e_*S \xrightarrow{F^{e-1}_*\mathrm{Tr}} F^{e-1}_*S \to \cdots \to F_*S \xrightarrow{\mathrm{Tr}} S, \\
\theta_f^e & \colon F^e_*S \xrightarrow{F^{e-1}_*\theta_f} F^{e-1}_*S \to \cdots \to F_*S \xrightarrow{\theta_f} S.
\end{align*}
Then, by the definition of the ideals $\{I_n(f)\}_{n \ge 1}$, we have 
\[
\fI{n}{f} \subseteq 
\sum_{i=0}^{n-1} \theta_f^i(F^i_*(f^{p-1}S))
   = \sum_{i=0}^{n-1} \mathrm{Tr}^i\bigl(F^i_*(f^{p-1}(f^{p(p-2)}\Delta(f))^{p^{i-1}+\cdots+1}S)\bigr), 
\]
where the exponent $p^{i-1}+\cdots+1$ is understood to be $0$ when $i=0$. 
Therefore, if $f^{p-1}(f^{p(p-2)}\Delta(f))^{p^{i-1}+\cdots+1} \in \m^{[p^{i+1}]}$ for all $0 \le i \le n-1$, 
then $\fI{n}{f} \subseteq \m^{[p]}$.
Since each term of $\Delta(f)$ is a product of $p$ monomials appearing in $f$, each term of $f^{p-1}(f^{p(p-2)}\Delta(f))^{p^{i-1}+\cdots+1}$ is a product of $p^{i+1}-1$ monomials appearing in $f$. 
It follows that if every product of $p^{i+1}-1$ monomials appearing in $f$ lies in $\m^{[p^{i+1}]}$ for all $0 \le i \le n-1$, then $\fI{n}{f} \subseteq \m^{[p]}$.
\end{remark}

\begin{example}\label{example:fedder-qFs}
We retain the notation of Setting~\ref{notation:fedder-qFs}.
We first recall that, when $p=2$, if $f=T_1+\cdots+T_r$ is written as a sum of distinct terms, then $\Delta(f)=\sum_{1\le i<j\le r} T_iT_j$. 

\textup{(1)}
Let $p=2$, $N=3$ and $f = x^2 + y^3 + z^5$. 
We will show that $\sht((S/f)_\m)=4$, which is equivalent, by Theorem~\ref{fedder-qFs}, to showing that $\fI{3}{f} \subseteq \m^{[2]}$ and $\fI{4}{f} \not\subseteq \m^{[2]}$. 

We first verify that $\fI{3}{f} \subseteq \m^{[2]}$.
Clearly $f \in \m^{[2]}$.
Moreover, every product of $3$ (resp.~$7$) monomials appearing in $f$ lies in $\m^{[4]}$ (resp.~$\m^{[8]}$). 
Thus $\fI{3}{f} \subseteq \m^{[2]}$ by Remark~\ref{comp-delta}. 
It remains to prove that $\fI{4}{f} \not\subseteq \m^{[2]}$.
Since $p=2$, we have
\[
f^{p(p-2)}\Delta(f) = x^2y^3 + x^2z^5 + y^3z^5. 
\]
A direct computation gives
\[
\theta_f(F_*(xf))=xyz^2,
\quad
\theta_f(F_*(xyz^2))=xz^3,
\quad
\theta_f(F_*(xz^3))=xyz.
\]
Moreover,
\[
\mathrm{Tr}(F_*(xf))=\mathrm{Tr}(F_*(xyz^2))=\mathrm{Tr}(F_*(xz^3))=0.
\]
Hence \(xyz^2\in \fI{2}{f}\), \(xz^3\in \fI{3}{f}\) and $xyz \in \fI{4}{f}$. 
Since $xyz \notin \m^{[2]}$, this proves $\fI{4}{f}\not\subseteq \m^{[2]}$. 

\medskip

\textup{(2)}
Let $p=2$, $N=5$ and $f=x_3x_4x_5^2+x_2^3x_4+x_1^3x_3$.
Then $\sht((S/f)_\m)=2$. 
Indeed, $\fI{1}{f}=fS \subseteq \m^{[2]}$.
Set
\[
T_1:=x_3x_4x_5^2,\quad
T_2:=x_2^3x_4,\quad
T_3:=x_1^3x_3.
\]
By the formula for $\Delta(f)$ when $p=2$, we have
\[
f\Delta(f)=(T_1+T_2+T_3)(T_1T_2+T_1T_3+T_2T_3).
\]
All terms in this expansion except $T_1T_2T_3$ belong to $\m^{[4]}$. 
Hence 
\[
f\Delta(f)\equiv T_1T_2T_3=x_1^3x_2^3x_3^2x_4^2x_5^2
\pmod{\m^{[4]}}.
\]
Therefore,
\[
x_3x_4x_5 f\Delta(f)
\equiv
(x_1\cdots x_5)^3
\pmod{\m^{[4]}}.
\]
It follows that
\[
\theta_f(F_*(x_3x_4x_5f))\equiv x_1\cdots x_5 \pmod{\m^{[2]}}.
\]
Since $\mathrm{Tr}(F_*(x_3x_4x_5f))=0$, this shows that $\fI{2}{f} \not\subseteq \m^{[2]}$.

\medskip

\textup{(3)}
Let $p=2$, $N=6$ and $f=x_1x_2x_6^2+x_3x_4x_5^2+x_2^3x_4+x_1^3x_3$.
We will show that $\sht((S/(x_6,f))_\m)=2$ and  $\sht((S/f)_\m)=\infty$. 
In particular, quasi-$F$-splitting does not deform, even for Gorenstein rings. 
Since
\[
S/(x_6,f) \simeq k[x_1,\ldots,x_5]/(x_3x_4x_5^2+x_2^3x_4+x_1^3x_3),
\]
the equality $\sht((S/(x_6,f))_\m)=2$ follows from \textup{(2)}. 

It remains to show that $\sht((S/f)_\m)=\infty$. 
Unlike in the previous examples, the same strategy does not apply in this setting. 
Indeed, 
\[
f \Delta(f) \equiv x_1^3 x_2^3 x_3^2 x_4^2 x_5^2 \pmod{\m^{[4]}},
\]
which implies that 
\[
\mathrm{Tr}(F_*(f \Delta(f) x_3 x_4 x_5 x_6)) = x_1 x_2 x_3 x_4 x_5 \notin \m^{[2]}.
\]
However, $\mathrm{Tr}(F_*(x_3 x_4 x_5 x_6 f)) = x_6 \ne 0$, this does not give an element of $\fI{2}{f}$. 

In order to prove that $\sht((S/f)_\m)=\infty$, it suffices to show that $\fI{n}{f} \subseteq \m^{[2]}$ for every integer $n \geq 1$ by Theorem~\ref{fedder-qFs}. 
A Macaulay2 computation using the script available at \cite{Takamatsu_code} shows that $\fI{4}{f}=\fI{5}{f}$ and that this ideal is generated by the following elements:\footnote{The equality $\sht((S/f)_\m)=\infty$ is also proved in \cite{KTY25}*{Example~4.3} without using Macaulay2.}
\[
\begin{aligned}
&x_2^3x_4x_6 + x_3x_4x_5^2x_6, \quad 
x_1^3x_3x_6 + x_3x_4x_5^2x_6, \quad 
x_1x_2^3x_6 + x_1x_3x_5^2x_6,\\
&x_1^3x_2x_6 + x_2x_4x_5^2x_6, \quad 
x_2x_3x_4x_5^2 + x_1x_2^2x_6^2, \quad 
x_1x_3x_4x_5^2 + x_1^2x_2x_6^2,\\
&x_2^2x_3x_4x_5 + x_1x_3x_5x_6^2, \quad 
x_1^2x_3x_4x_5 + x_2x_4x_5x_6^2,\quad 
x_2^3x_4x_5 + x_1x_2x_5x_6^2, \\
&x_1^3x_3x_5 + x_1x_2x_5x_6^2,\quad
x_1x_2^2x_3x_4 + x_1^2x_3x_6^2,\quad 
x_1^2x_2x_3x_4 + x_2^2x_4x_6^2,\\
&x_1x_2^3x_4 + x_1^2x_2x_6^2,\quad
x_1^3x_2x_3 + x_1x_2^2x_6^2,\quad 
x_1^2x_2^3 + x_1^2x_3x_5^2,\\ 
&x_1^3x_2^2 + x_2^2x_4x_5^2,\quad 
x_1^3x_3 + x_2^3x_4 + x_3x_4x_5^2 + x_1x_2x_6^2.
\end{aligned}
\]
By the inductive definition of $\fI{n}{f}$, the increasing sequence $\{\fI{n}{f}\}_{n \ge 1}$ stabilizes at $\fI{4}{f}$. 
Hence $\fI{n}{f} \subseteq \fI{4}{f} \subseteq \m^{[2]}$ for every integer $n \geq 1$. 
\end{example}

\begin{remark}\label{example-inv-adj}
The third author of this article proves a similar criterion for quasi-$F$-regularity (see \cite{Yoshikawa25-fedder}*{Theorem~A}). 
Applying this criterion, one can check that the ring $(S/(x_6,f))_\m$ in Example~\ref{example:fedder-qFs}(3) is quasi-$F$-regular (see \cite{Yoshikawa25-fedder}*{Example~1.2}). 
Consequently, quasi-$F$-regularity does not deform even for Gorenstein rings. 
\end{remark}

In general, it is difficult to compute the ideal $I_{n}(f)$ for large $n$. 
The following criterion is therefore useful for proving that a hypersurface singularity is not quasi-$F$-split in characteristic $p>2$.

\begin{proposition}[\cite{KTY22}*{Corollary~4.19}]\label{non-qfs}
With the notation as in Setting~\ref{notation:fedder-qFs}, if $f^{p-2} \in \m^{[p]}$, then $(S/f)_\m$ is not quasi-$F$-split.
\end{proposition}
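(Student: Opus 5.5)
The plan is to use Theorem~\ref{fedder-qFs}: it suffices to show that $\fI{n}{f} \subseteq \m^{[p]}$ for every $n \ge 1$. I will prove a stronger statement by induction on $n$, namely $\fI{n}{f} \subseteq f^{p-2}S + \m^{[p]}$. Since $f^{p-2}\in \m^{[p]}$ by hypothesis, this larger ideal equals $\m^{[p]}$, so the stronger statement immediately gives $\fI{n}{f} \subseteq \m^{[p]}$. Concretely, I will show that $J:=\m^{[p]}$ is stable under the recursion, i.e. $\theta_f(F_*J\cap \Ker\mathrm{Tr}) + (f^{p-1}) \subseteq J$.

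The base case is immediate: $\fI{1}{f}=(f^{p-1}) \subseteq (f^{p-2})\subseteq \m^{[p]}$. For the inductive step it suffices to show $\theta_f(F_*\m^{[p]})\subseteq \m^{[p]}$; I will not even need the condition $\Ker\mathrm{Tr}$. For $g\in \m^{[p]}$ we have
\[
\theta_f(F_*g)=\mathrm{Tr}\bigl(F_*(g f^{p(p-2)}\Delta(f))\bigr).
\]
Write $f^{p-2}=\sum_j a_j x_j^p$, which is possible because $f^{p-2}\in\m^{[p]}$. Then
\[
f^{p(p-2)}=(f^{p-2})^p=\sum_j a_j^p x_j^{p^2}\in (\m^{[p^2]})\cdot F(S).
\]
Since $\mathrm{Tr}$ is $S$-linear with respect to $p$-th powers, we get
\[
\mathrm{Tr}\bigl(F_*(g\,a_j^p x_j^{p^2}\Delta(f))\bigr)=a_j x_j^{p}\,\mathrm{Tr}\bigl(F_*(g\Delta(f))\bigr)\in \m^{[p]}.
\]
So in fact $\theta_f(F_*S)\subseteq \m^{[p]}$ outright, regardless of what $g$ is. Combining this with $f^{p-1}\in\m^{[p]}$ gives $\fI{n+1}{f}\subseteq \m^{[p]}$.

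It follows that $\fI{n}{f}\subseteq\m^{[p]}$ for all $n$, so $\sht((S/f)_\m)=\infty$ and $(S/f)_\m$ is not quasi-$F$-split. The argument rests on the identity $f^{p(p-2)}=(f^{p-2})^p$ together with the projection formula $\mathrm{Tr}(F_*(s^p t))=s\,\mathrm{Tr}(F_*t)$. Since these are elementary, the only point needing care is that $\theta_f$ as defined in Setting~\ref{notation:fedder-qFs} really contains the factor $f^{p(p-2)}$. When $p=2$ that factor is $1$ and the hypothesis $f^0=1\in\m^{[2]}$ is never satisfied, so $p=2$ is vacuous. No serious obstacle is expected.
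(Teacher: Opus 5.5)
Your proof is correct. The paper gives no argument of its own here; it only quotes the result from the earlier Fedder-type paper. Your route is the natural one given how the paper normalizes $\theta_f$.

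You can streamline it. Since $\mathrm{Tr}$ is $S$-linear,
$\theta_f(F_*g)=\mathrm{Tr}\bigl(F_*((f^{p-2})^p\,g\,\Delta(f))\bigr)=f^{p-2}\,\mathrm{Tr}\bigl(F_*(g\Delta(f))\bigr)$.
So $\fI{n}{f}\subseteq (f^{p-2})$ holds for every $n$ with no hypothesis at all, and the assumption $f^{p-2}\in\m^{[p]}$ is needed only at the very last step. This makes the induction and the decomposition $f^{p-2}=\sum_j a_jx_j^p$ unnecessary. Also, your ``stronger statement'' $\fI{n}{f}\subseteq f^{p-2}S+\m^{[p]}$ is not actually stronger, since under the hypothesis that ideal is just $\m^{[p]}$.

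Your remark that you never use $\Ker\mathrm{Tr}$ holds only because of the paper's normalization. The paper has already replaced $\Delta(f^{p-1})$ by $-f^{p(p-2)}\Delta(f)$ in defining $\theta_f$, and that identity holds only modulo $F(S)$. What makes the two definitions give the same ideals is the restriction to $\Ker\mathrm{Tr}$, where $\mathrm{Tr}(F_*(s^pg))=s\,\mathrm{Tr}(F_*g)=0$. In the original formulation with $\Delta(f^{p-1})$, your factoring argument works only on $F_*\fI{n}{f}\cap\Ker\mathrm{Tr}$, so there the kernel condition is genuinely used.
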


\begin{example}\label{example:non-qFs}
We retain the notation of Setting~\ref{notation:fedder-qFs}.
Assume that $p \equiv m \pmod{N}$ for some $3 \leq m \leq N-1$, and set
\[
f=x_1^N+\cdots+x_N^N.
\]
Then we have
\[
f^{p-2} 
    \equiv 
    \binom{p-m}{\frac{p-m}{N},\ldots,\frac{p-m}{N}}
    (x_1\cdots x_N)^{p-m} f^{m-2}  
    \equiv 0 \pmod{\m^{[p]}}.
\]
Therefore, $(S/f)_\m$ is not quasi-$F$-split by Proposition~\ref{non-qfs}.

In the case $m=2$, the above argument does not apply, because $(x_1\cdots x_N)^{p-2}\notin \m^{[p]}$. 
In fact, in this case, the ring $(S/f)_\m$ is quasi-$F$-split if and only if $N=3$ by \cite{KTY22}*{Example~4.13} and \cite{KTY25}*{Example~3.17}. 
\end{example}

When the ring is the homogeneous coordinate ring of a Calabi-Yau complete intersection, we have a simpler criterion. 
\begin{theorem}[\cite{KTY22}*{Theorem~5.8}]\label{CY-fedder}
With the notation of Setting~\ref{notation:fedder-qFs}, suppose that $f_1, \dots, f_m \in S=k[x_1, \dots, x_N]$ form a homogeneous regular sequence and $f:=f_1 \cdots f_m$ has degree $N$.  
Define a sequence of elements $\{f^{(n)}\}_{n \ge 1}$ inductively by
\begin{align*}
f^{(1)}&:=f^{p-1},\\ 
f^{(n)}&:=f^{p-1}\Delta(f^{p-1})^{p^{n-2}+\cdots+1} \quad (n \ge 2).
\end{align*}
Then the following equalities hold: 
\[
\sht((S/(f_1,\ldots,f_m))_\m)=
\inf\{n \mid \theta_f^{n-1}(F^{n-1}_* f^{p-1}) \notin \m^{[p]} \}
=\inf\{n \mid f^{(n)} \notin \m^{[p^n]} \}.
\]
\end{theorem}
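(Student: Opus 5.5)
The plan is to combine the complete-intersection form of the Fedder-type criterion with a degree argument. First, \cite{KTY22}*{Theorem~4.11} extends (and I will take it in this form) to homogeneous complete intersections: with $f:=f_1\cdots f_m$, $\theta_f$ and the ideals $\fI{n}{f}$ defined exactly as in Setting~\ref{notation:fedder-qFs}, one has $\sht((S/(f_1,\ldots,f_m))_\m)=\inf\{n\mid \fI{n}{f}\nsubseteq\m^{[p]}\}$. The whole proof then amounts to computing $\fI{n}{f}$ modulo $\m^{[p]}$ using the grading and the hypothesis $\deg f=N$.

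\textbf{Step 1: degree bookkeeping.} For homogeneous $g$ of degree $d$, $\mathrm{Tr}(F_*g)$ is homogeneous of degree $(d-N(p-1))/p$ (or zero). The multiplier $f^{p(p-2)}\Delta(f)$ has degree $p(p-1)N$. Hence
\[
\deg\theta_f(F_*g)=\frac{d+p(p-1)N-N(p-1)}{p},
\]
which equals $N(p-1)$ exactly when $d=N(p-1)$ and is at least $N(p-1)$ when $d\ge N(p-1)$. By induction on $n$, every homogeneous element of $\fI{n}{f}$ therefore has degree $\ge N(p-1)$: multipliers from $S$ and from $F_*S$ only raise degree. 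Every monomial of degree $>N(p-1)$ has some exponent $\ge p$, so it lies in $\m^{[p]}$. Consequently $\fI{n}{f}\nsubseteq\m^{[p]}$ if and only if its degree-$N(p-1)$ component $V_n$ is not contained in $\m^{[p]}$. Moreover, a degree-$N(p-1)$ form lies outside $\m^{[p]}$ if and only if its coefficient of $(x_1\cdots x_N)^{p-1}$ is nonzero, which holds if and only if $\mathrm{Tr}(F_*g)\neq 0$; note that this trace is a constant.

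\textbf{Step 2: identify $V_n$.} I claim that, as long as $V_{n-1}\subseteq\m^{[p]}$,
\[
V_n=k\text{-span}\{\theta_f^i(F^i_*f^{p-1})\mid 0\le i\le n-1\}.
\]
By Step 1, the only elements of $F_*\fI{n-1}{f}$ that contribute to degree $N(p-1)$ after applying $\theta_f$ are elements of $F_*V_{n-1}$. Here $k$ is perfect, so $k$-linear combinations under $F_*$ cause no trouble. If $V_{n-1}\subseteq\m^{[p]}$, these elements have zero trace and hence automatically lie in $\Ker\mathrm{Tr}$. The intersection with $\Ker\mathrm{Tr}$ in the definition of $\fI{n}{f}$ is therefore harmless. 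Combining this with Step 1 gives the first equality: the height is the first $n$ with $\theta_f^{n-1}(F^{n-1}_*f^{p-1})\notin\m^{[p]}$.

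\textbf{Step 3: the second equality.} Unwinding $\theta_f^{n-1}$ as in Remark~\ref{comp-delta} gives $\theta_f^{n-1}(F^{n-1}_*f^{p-1})=\mathrm{Tr}^{n-1}(F^{n-1}_*g_n)$, where $g_n$ is homogeneous of degree $N(p^n-1)$. The same top-degree reasoning, applied with $p^n$ in place of $p$, shows that a form $g$ of degree $N(p^n-1)$ lies outside $\m^{[p^n]}$ if and only if $\mathrm{Tr}^n(F^n_*g)\neq0$, and this holds if and only if $\mathrm{Tr}^{n-1}(F^{n-1}_*g)\notin\m^{[p]}$. It then remains to replace $g_n$, which is built from $f^{p(p-2)}\Delta(f)$, by $f^{(n)}$, which is built from $\Delta(f^{p-1})$.

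\textbf{Expected obstacle.} This replacement is the step I expect to be hardest. The two elements agree only up to sign modulo $F(S)$ (\cite{KTY22}*{Proposition~3.8(4)}), and a $p$-th power difference is not killed by $\mathrm{Tr}$. I would first try to prove directly by induction on $n$ that
\[
\theta_f^{n-1}(F^{n-1}_*f^{p-1})\equiv\pm\,\mathrm{Tr}^{n-1}(F^{n-1}_*f^{(n)}) \pmod{\m^{[p]}}.
\]
The approach is to check that the extra $p$-th power terms contribute only in degrees where, by the grading, they are either annihilated or fall into $\m^{[p]}$, using that earlier iterates lie in $\m^{[p]}$. Failing that, I would work with $\Delta(f^{p-1})$ throughout, as in the original form of \cite{KTY22}*{Theorem~4.11}, where the identity with $f^{(n)}$ is a direct expansion.
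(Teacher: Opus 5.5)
The paper only quotes this result from \cite{KTY22}, so there is no in-paper proof to compare with; I am judging your argument on its own.

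For $m=1$, your Steps~1 and~2 correctly derive the first equality from Theorem~\ref{fedder-qFs}. The right ingredients are all there: the degree shift $d\mapsto (d+(p-1)^2N)/p$ of $\theta_f$, the fact that $\m^{[p]}$-membership in degree $N(p-1)$ is detected by the coefficient of $(x_1\cdots x_N)^{p-1}$, and the vacuity of the intersection with $\Ker\mathrm{Tr}$ on $S_{N(p-1)}\cap\m^{[p]}$.

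The genuine gap is the case $m\ge 2$. There $S/(f_1\cdots f_m)$ and $S/(f_1,\ldots,f_m)$ are different rings, so Theorem~\ref{fedder-qFs} says nothing about the latter. The criterion you ``take in this form'' is therefore the entire content of that case, and it has to come from an actual complete-intersection criterion. Moreover, if such a criterion starts, as in Fedder's, from $(I^{[p]}:I)=f^{p-1}S+I^{[p]}$ with $I=(f_1,\ldots,f_m)$, then your Step~1 claim that every homogeneous element has degree $\ge N(p-1)$ is false, since $f_i^p$ has degree $p\deg f_i$.

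This can be repaired inside your framework. For a homogeneous ideal $J$ one has $J\nsubseteq\m^{[p]}$ if and only if $J_{N(p-1)}\nsubseteq\m^{[p]}$: multiply a homogeneous witness by a suitable monomial. Also, the degree-$N(p-1)$ part of $\theta_f(F_*J\cap\Ker\mathrm{Tr})$ is $\theta_f$ of the degree-$N(p-1)$ part of $F_*J\cap\Ker\mathrm{Tr}$. So it suffices to show that $\theta_f$ maps $(I^{[p]})_{N(p-1)}$ into $(f^{p-1}S+I^{[p]})_{N(p-1)}$.

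To see this, write $f=f_ig$. A summand $f_i^ph$ of degree $N(p-1)$ lies in $\m^{[p]}$, which forces $f_i\,\mathrm{Tr}(F_*h)=0$, hence $\mathrm{Tr}(F_*h)=0$. Then $\Delta(f)\equiv f_i^p\Delta(g)+g^p\Delta(f_i)$ modulo $F(S)$ gives $\theta_f(F_*(f_i^ph))\in f_i^pg^{p-2}S+f^{p-1}S$. Consequently, up to the height, the degree-$N(p-1)$ part of the $n$-th ideal is the $k$-span of $(I^{[p]})_{N(p-1)}$ together with the iterates $\theta_f^i(F^i_*f^{p-1})$ for $i<n$. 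Your conclusion then survives.

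Your Step~3 is left unfinished, but the ``expected obstacle'' is not one, and your first approach works with exact equality. Write $\Delta(f^{p-1})=-f^{p(p-2)}\Delta(f)+c^p$ and set $\theta'(F_*a):=\mathrm{Tr}(F_*(a\Delta(f^{p-1})))$. By $S$-linearity, $\theta'(F_*a)=-\theta_f(F_*a)+c\,\mathrm{Tr}(F_*a)$. The $p$-th power discrepancy therefore contributes nothing whenever $a\in S_{N(p-1)}\cap\m^{[p]}$, because then $\mathrm{Tr}(F_*a)=0$. By induction, $\theta'^{\,i}(F^i_*f^{p-1})=\pm\theta_f^{\,i}(F^i_*f^{p-1})$ for every $i$ up to and including the first index at which the iterate leaves $\m^{[p]}$, so the two infima coincide. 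Past that index the congruence you propose to prove for all $n$ need not hold, but it is not needed there. For $\theta'$ no comparison is required: the identity $b\,\mathrm{Tr}(F_*a)=\mathrm{Tr}(F_*(b^pa))$ gives $\theta'^{\,n-1}(F^{n-1}_*f^{p-1})=\mathrm{Tr}^{n-1}(F^{n-1}_*f^{(n)})$ exactly. Since $\deg f^{(n)}=N(p^n-1)$, your top-degree argument shows this lies outside $\m^{[p]}$ if and only if $f^{(n)}\notin\m^{[p^n]}$.
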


When $f \in S$ is a homogeneous polynomial and we consider the projective variety $\Proj(S/f)$, it is known that the quasi-$F$-split height \(\sht((S/f)_\m)\) agrees with its Artin–Mazur height (cf.~\cite{yobuko19} and \cite{KTY22}*{Theorem~5.8}). 
Theorem~\ref{CY-fedder} is particularly well suited to computer-assisted calculations and has found applications in related work (see \cite{KTY22}*{Example~6.2}, \cite{BGP} and \cite{GJ}). 
Furthermore, this enables us to construct examples of Calabi--Yau varieties with arbitrary Artin--Mazur height.

\begin{theorem}[\cite{KTY25}*{Theorem~4.18}]
For every prime number $p$ and every positive integer $h$, there exists a Calabi--Yau variety over $\mathbb{F}_p$ whose Artin--Mazur height is equal to $h$.
\end{theorem}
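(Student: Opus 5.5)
This result is quoted from \cite{KTY25}*{Theorem~4.18}, and the plan is to construct explicit Calabi--Yau hypersurfaces (or complete intersections) of prescribed height and compute their heights with Theorem~\ref{CY-fedder}. By the remark preceding the theorem, the quasi-$F$-split height of $(S/f)_\m$ for a homogeneous $f$ of degree $N$ in $N$ variables agrees with the Artin--Mazur height of $\Proj(S/f)$. The problem therefore reduces to a purely combinatorial one. For each $p$ and $h$ we must find $f\in\F_p[x_1,\dots,x_N]$, homogeneous of degree $N$, with smooth $\Proj(S/f)$, such that
\[
f^{(n)}\in\m^{[p^n]} \quad (n<h), \qquad f^{(h)}\notin\m^{[p^h]}.
\]
The natural choice is a Fermat-type or ``chain'' polynomial whose monomials make the membership in $\m^{[p^n]}$ easy to test.

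The first step is to reduce the computation to monomial combinatorics, as in Remark~\ref{comp-delta}. Each term of $f^{(n)}$ is a product of $p^n-1$ monomials of $f$. So if every product of $p^n-1$ monomials of $f$ lies in $\m^{[p^n]}$, then $f^{(n)}\in\m^{[p^n]}$. To show non-membership at $n=h$, it suffices to find a monomial appearing in $f^{(h)}$ with nonzero coefficient whose exponents are all at most $p^h-1$. One natural target is $(x_1\cdots x_N)^{p^h-1}$, since this is the monomial detected by $\mathrm{Tr}^h$. The key design goal is therefore a polynomial for which $(x_1\cdots x_N)^{p^h-1}$ is the only product of $p^h-1$ monomials of $f$ with small exponents, and for which no such product exists at smaller levels $n$. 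One would try $N$ large relative to $h$ and a polynomial like $f=\sum_i x_i^{a_i}x_{i+1}^{b_i}$ (indices cyclic, $a_i+b_i=N$, or with several variables per monomial), with exponents chosen so that solving
\[
\sum_j \alpha_j\,\mathrm{exp}(M_j)\le (p^n-1)(1,\dots,1)
\]
in nonnegative integers $\alpha_j$ summing to $p^n-1$ forces base-$p$ digit constraints that can only be met when $n\ge h$.

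Once the combinatorics is arranged, the remaining work is to compute the coefficient of the distinguished monomial in $f^{(h)}$. This coefficient involves $\Delta(f^{p-1})$, a $p$-adic ``carry'' coefficient, and it must be nonzero mod $p$. I would use the identity
\[
\Delta(f^{p-1})\equiv -f^{p(p-2)}\Delta(f) \pmod{F(S)}
\]
together with the explicit multinomial formula for $\Delta(f)$. This should express the coefficient as a product of factors of the form $\tfrac1p\binom{p}{\alpha}$ and multinomial coefficients, each of which is a unit by a Lucas-type argument. Smoothness of $\Proj(S/f)$, needed for it to be Calabi--Yau, must also be checked. I would get it via a Jacobian computation for the chosen shape, possibly after adding generic terms that do not affect the relevant low-exponent products.

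The main obstacle is simultaneously forcing vanishing for all $n<h$ and nonvanishing of a specific coefficient at $n=h$, uniformly in $p$, including $p=2$. Small primes will likely need a separate family. My first attempt would be to design the exponent pattern so that the solutions at level $n$ correspond to base-$p$ expansions with exactly one carry propagating through $h$ variables, so that the unique solution appears only at depth $h$.
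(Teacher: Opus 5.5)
Your framework is the one the paper points to: the paper gives no proof, only the citation to \cite{KTY25} and the remark that Theorem~\ref{CY-fedder} makes such constructions possible. But the proposal stops exactly where the content of the theorem begins. No polynomial $f$ is ever written down. The facts that make up the proof are $f^{(n)}\in\m^{[p^n]}$ for $n<h$, $f^{(h)}\notin\m^{[p^h]}$, and smoothness of $\Proj(S/f)$, for every $p$ (including $p=2$) and every $h$. These appear only as design goals, and the crux is explicitly deferred as ``the main obstacle''. As it stands this is a research plan, not a proof. Your sufficient condition for vanishing at the levels $n<h$ is correct: by degree, the only product of $p^n-1$ monomials of $f$ that can escape $\m^{[p^n]}$ is $(x_1\cdots x_N)^{p^n-1}$, so it suffices that this is not such a product. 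But that is the easy half.

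The mechanism you suggest for the other half does not work as stated. Uniqueness of the factorization of $(x_1\cdots x_N)^{p^h-1}$ into $p^h-1$ monomials of $f$, together with the absence of factorizations at lower levels, does not make its coefficient in $f^{(h)}$ nonzero. Take $f=x_1^4+\cdots+x_4^4$ and $p\equiv 3 \pmod 4$. There is no factorization at level $1$ and exactly one at level $2$, namely $\prod_i (x_i^4)^{(p^2-1)/4}$. Yet $(S/f)_\m$ is not quasi-$F$-split by Example~\ref{example:non-qFs}, so this coefficient vanishes. What decides the matter is a $p$-adic carry count of the kind done with Kummer's theorem in Example~\ref{cusp}. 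With $a=(p^2-1)/4$, the multinomial coefficient $\binom{p^2-1}{a,a,a,a}$ is divisible by $p^2$. By contrast, for the Fermat cubic with $p\equiv 2\pmod 3$ (a supersingular elliptic curve, of height $2$), the analogous coefficient $\binom{p^2-1}{b,b,b}$ with $b=(p^2-1)/3$ is divisible by $p$ but not by $p^2$. A Lucas-type argument that ``each factor is a unit'' cannot detect this. Moreover, the coefficient in $f^{(h)}$ is in general a sum over the ways of distributing the monomials among $f^{p-1}$ and the factors $\Delta(f^{p-1})^{p^j}$, not a single product.

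Smoothness is a further problem. Over $\mathbb{F}_p$ you cannot simply add ``generic'' terms, and every added monomial changes the set of products you must control. In fact, smoothness at the coordinate points already forces $f$ to contain, for each $i$, either $x_i^N$ or some $x_i^{N-1}x_j$. These monomials must therefore be built into the combinatorial design from the start. What is missing is an explicit family valid for all $p$ and $h$, together with this valuation bookkeeping at every level $n\le h$ and a smoothness check.
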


\subsection{Quasi-$F$-pure thresholds}

In this subsection, we introduce the notion of quasi-$F$-pure thresholds. 
With the notation as in Setting~\ref{notation:qFinj}, suppose that $\Delta=\operatorname{div}(f)$ is Cartier. 
If the pair $(A, t \Delta)$ is quasi-$F^{\infty}$-split, then so is $(A, s \Delta)$ for all $s \le t$. 
The quasi-$F$-pure threshold of $\Delta$ is the critical value of $t$ for which $(A, t \Delta)$ is quasi-$F^{\infty}$-split. 
This invariant can be viewed as a closer positive-characteristic analog of the lc threshold than the usual $F$-pure threshold. 
We describe a criterion for computing it. 

\begin{definition}\label{defn:threshold}
Let $(A,\m)$ be an $F$-finite normal local ring of characteristic $p>0$ and $f \in \m$ be a nonzero element.
Assume that $A$ is quasi-$F^{\infty}$-split.
The \emph{quasi-$F$-pure threshold} of $f$ is then defined as 
\[
\qfpt(A;f)
:= \sup \{\, c \in \Q_{\ge 0} \mid (A, c\,\mathrm{div}(f)) 
\text{ is quasi-$F^{\infty}$-split}\,\}.
\]
\end{definition}

\begin{setting}\label{notation:fedder-qFreg}
Let $A:=k[[x_1, \dots, x_N]]$ be the formal power series ring in $N$ variables over a perfect field $k$ of characteristic $p>0$ and $f \in (x_1, \dots, x_N)$ be a nonzero element of $A$. 
Let $\wt{A}:=W(k)[[x_1, \dots, x_N]]$ be the formal power series ring in $N$ variables over the ring $W(k)=\varprojlim_n W_n(k)$ of Witt vectors of $k$, and set $\m:=(x_1,\ldots,x_N)\wt{A}$ and $\m^{[p^n]}:=(x_1^{p^n},\ldots,x_N^{p^n})\wt{A}$ for every integer $n \ge 1$. 
Choose a lift $\wt{f} \in \m$ of $f$ such that $\wt{f}$ and $p$ form a regular sequence in $\wt{A}$. 
Let 
\[
F \colon W(k) \to W(k)\quad (a_0, a_1, \dots) \mapsto (a_0^p, a_1^p,\dots) 
\]
be the Frobenius map, which is a local ring isomorphism since $k$ is perfect. 
Let $\varphi \colon \wt{A} \to \wt{A}$ be the Frobenius lift, that is, the ring endomorphism extending $F$ and satisfying $\varphi(x_i)=x_i^p$ for all $1 \le i \le N$. 
Then for each integer $n \ge 1$, the set
\[
\{\phi^n_*(x_1^{e_1}\cdots x_N^{e_N}) \mid 0 \le e_1,\ldots,e_N \le p^n-1\}
\]
is a free $\wt{A}$-basis of $\phi^n_*\wt{A}$.
Let $\mathrm{Tr}^n \colon \varphi^n_*\wt{A} \to \wt{A}$ denote the $\wt{A}$-linear map that sends $\varphi^n_*((x_1 \cdots x_N)^{p^n-1})$ to 1 and all other basis elements to zero. 
\end{setting}

The following theorem generalizes Proposition~\ref{Fedder-fpt} and is useful for computing quasi-$F$-pure thresholds.

\begin{theorem}\label{comp-qfps}
We retain the notation of Setting~\ref{notation:fedder-qFreg}.
For each integer $e \ge 1$, we define $\nu^q_e(f)$ to be the largest  integer $s$ such that 
there exist an integer $n \ge 1$ and an element $g \in (\wt{f}^{sp^{\,n-1}}, p^n)\wt{A}$ satisfying the following conditions: 
\begin{enumerate}
    \item[(i)] $\mathrm{Tr}^{e+r-1}(\phi^{\,e+r-1}_* g) \in (p^r)$ for all $1 \le r \le n-1$, and
    \item[(ii)] $\mathrm{Tr}^{e+n-1}(\phi^{\,e+n-1}_* g)  \notin (\m, p^n)$.
\end{enumerate}
Then $\nu^q_e(f)$ is independent of the choice of the lift $\wt{f}$. 
Furthermore, $\left\{\nu^q_e(f)/{p^e}\right\}_{e \ge 1}$ is an increasing sequence, and
\[
\lim_{e \to \infty} \frac{\nu^q_e(f)}{p^e}
= \qfpt(A;f).
\]
\end{theorem}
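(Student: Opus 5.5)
The plan is to reduce the statement to a Fedder-type criterion for quasi-$F^e$-splitting of the pair $(A, (s/p^e)\operatorname{div}(f))$. The Witt-vector data will be expressed through the lift $\wt{A}$ with Frobenius lift $\varphi$, and the threshold statement then follows by a monotonicity-and-limit argument as in Proposition~\ref{Fedder-fpt}.

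\emph{Step 1: a criterion for $\nu^q_e(f)$.} I would prove that, for an integer $s\ge 0$, the pair $(A,(s/p^e)\operatorname{div}(f))$ is quasi-$F^e$-split if and only if there exist $n\ge 1$ and $g\in(\wt f^{sp^{n-1}},p^n)\wt A$ satisfying (i) and (ii). For this I would use the identification $W_nA\cong \wt A/p^n$ twisted by ghost components, i.e.\ the Cartier--Dieudonn\'e--Dwork description used in \cite{KTY22} and \cite{Yoshikawa25-fedder}. Under it, $F^e_*W_nA$ is modelled by $\varphi^{e}_*(\wt A/p^n)$, restricted to elements whose Frobenius images have the appropriate $p$-divisibility. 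Multiplication by $F^e_*[f^s]$ on the $n$-th component then corresponds to $\wt f^{sp^{n-1}}$.

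By Matlis duality (Lemma~\ref{lem:local duality}), injectivity of $\Phi^e_{A,K_A+\Delta,n}$ on $H^N_\m(\omega_A)$ becomes surjectivity of the dual map $\Hom_{W_nA}(Q^e_{A,\Delta,n},W_n\omega_A)\to A$. Elements of this Hom module are given by $\mathrm{Tr}^{e+n-1}$ composed with multiplication by some $g$. Condition (i) says that $g$ defines a homomorphism that vanishes on the correct kernel, i.e.\ that it factors through the pushout; this is the Witt-vector analogue of $\theta_f(F_*I_n\cap\Ker\mathrm{Tr})$ in Theorem~\ref{fedder-qFs}. Condition (ii) says that the resulting map hits a unit. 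This step follows the proof of \cite{KTY22}*{Theorem~4.11} and its quasi-$F$-regular version.

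\emph{Step 2: independence of the lift.} If $\wt f'=\wt f+ph$, then $\wt f'^{\,sp^{n-1}}\equiv \wt f^{sp^{n-1}}$ modulo $p^n$ multiplied by terms of higher $p$-adic content, by the standard estimate $(a+pb)^{p^{n-1}}\equiv a^{p^{n-1}} \pmod{p^n}$. Hence the ideals $(\wt f^{sp^{n-1}},p^n)$ coincide, and the independence of $\nu^q_e(f)$ from the choice of lift is immediate from the definition.

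\emph{Step 3: monotonicity and the limit.} Given $g$ that witnesses $s$ at level $e$, I would take $g'=\varphi^{n}(\dots)$ times a suitable monomial, imitating $f^{ps}\cdot(x_1\cdots x_N)^{p-1}$. Since $\mathrm{Tr}$ applied to $(x_1\cdots x_N)^{p-1}$ times a $p$-th power returns the base element, this $g'$ witnesses $ps$ at level $e+1$. Thus $\nu^q_{e+1}\ge p\,\nu^q_e$, so the sequence $\nu^q_e(f)/p^e$ is increasing. By Step 1, $(A,(\nu^q_e/p^e)\operatorname{div} f)$ is quasi-$F^e$-split, and using Remark~\ref{remark:quasi-F-singularities}(1) (quasi-$F^{e'}$ implies quasi-$F^e$ for $e\le e'$) together with monotonicity in the coefficient, I would show that $c<\lim_e \nu^q_e/p^e$ implies $(A,c\operatorname{div} f)$ is quasi-$F^\infty$-split. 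Conversely, if $(A,c\operatorname{div} f)$ is quasi-$F^{\infty}$-split, then for each $e$ the pair $(A,(\lfloor cp^e\rfloor/p^e)\operatorname{div} f)$ is quasi-$F^e$-split, so $\nu^q_e\ge\lfloor cp^e\rfloor$. Together these give the equality with $\qfpt(A;f)$.

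The main obstacle is Step 1, specifically translating the pushout $Q^e_{A,\Delta,n}$ and the $F^e$-twisted Witt structure into the explicit conditions (i) and (ii) on $\wt A$. This translation determines which $p$-power divisibility of the traces $\mathrm{Tr}^{e+r-1}$ is required at each level $r$. I would first do the case $n=2$ by hand, then induct using the exact sequence $0\to F_*W_{n-1}\xrightarrow{V}W_n\to A\to 0$.
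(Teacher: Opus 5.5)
Your argument is correct in outline, but outside Step~1 it takes a different route from the paper. The paper uses two results of \cite{Yoshikawa25-fedder} as black boxes. The first is Theorem~6.9, which is exactly your Step~1 and from which the paper also deduces lift-independence. The second is Proposition~6.4, which upgrades quasi-$F^e$-splitting of $(A,\tfrac{s}{p^e}\operatorname{div}(f))$ to quasi-$F^{e+1}$-splitting and hence to quasi-$F^\infty$-splitting. Together these give $\nu^q_{e+1}(f)\ge p\,\nu^q_e(f)$ and $\nu^q_e(f)/p^e\le\qfpt(A;f)$ for each $e$ at once.

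You instead get lift-independence and monotonicity straight from the definition. This is more elementary and removes the need for Proposition~6.4.

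Your Step~2 is right: $(\wt f+ph)^{p^{n-1}}\equiv\wt f^{p^{n-1}}\pmod{p^n}$, so the ideal $(\wt f^{sp^{n-1}},p^n)$ does not depend on the lift, and (i), (ii) involve only $g$.

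In Step~3 the witness should be $g'=\varphi(g)\,(x_1\cdots x_N)^{p-1}$ with the same $n$; your $\varphi^n(\dots)$ would move $n$ levels rather than one. Since $\varphi(\wt f)\equiv\wt f^{p}\pmod p$, you get $\varphi(\wt f)^{p^{n-1}}\equiv\wt f^{p^n}\pmod{p^n}$, so $g'\in(\wt f^{psp^{n-1}},p^n)$. The factorization $\mathrm{Tr}^{m+1}=\mathrm{Tr}^m\circ\varphi^m_*\mathrm{Tr}^1$ gives $\mathrm{Tr}^{m+1}(\varphi^{m+1}_*g')=\mathrm{Tr}^m(\varphi^m_*g)$, so (i) and (ii) transfer verbatim from level $e$ to level $e+1$.

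Your limit argument then needs only two inputs: the downward implication of Remark~\ref{remark:quasi-F-singularities}(1), and monotonicity in the coefficient at a fixed $e$. The paper states the latter only for quasi-$F^\infty$-splitting. It does hold, via the natural map $Q^e_{A,K_A+c\operatorname{div}(f),n}\to Q^e_{A,K_A+c'\operatorname{div}(f),n}$ for $c\le c'$. Your route bounds $\nu^q_e(f)/p^e$ by $\qfpt(A;f)$ only in the limit, but that suffices.

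Step~1 in both directions remains the real input, and your sketch of it is only a plan. In particular, $W_nA$ is not isomorphic to $\wt A/p^n$. The ghost component only embeds it, intertwining $F$ with $\varphi$ and sending $[f^s]$ to $\wt f^{sp^{n-1}}$. You should either carry out that translation or cite \cite{Yoshikawa25-fedder}*{Theorem~6.9}, as the paper does.
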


\begin{proof}
Set $D := \mathrm{div}(f)$.  
It follows from \cite{Yoshikawa25-fedder}*{Theorem~6.9} that 
\begin{equation}\label{eq:nu}
\nu^q_e(f)
= \max\{\, s \in \Z_{\ge 0} \mid (A, \tfrac{s}{p^e}D) \text{ is quasi-$F^e$-split} \},
\end{equation}
and in particular,  $\nu^q_e(f)$ does not depend on the choice of the lift $\wt{f}$. 
Let $s := \nu^q_e(f)$, so $(A, \frac{s}{p^e}D)$ is quasi-$F^e$-split.
By \cite{Yoshikawa25-fedder}*{Proposition~6.4}, the pair 
$(A, \frac{s}{p^e}D)$ is also quasi-$F^{e+1}$-split.
Thus 
\[
\nu^q_{e+1}(f)\ge p s = p\,\nu^q_e(f), 
\]
and therefore $\left\{{\nu^q_e(f)}/{p^e}\right\}_{e \ge 1}$ is an increasing sequence.

Next, we prove that 
\[
\lim_{e \to \infty} \frac{\nu^q_e(f)}{p^e}
= \qfpt(A;f).
\]
We first show that $\lim_{e \to \infty} \nu^q_e(f)/p^e \ge \qfpt(A;f)$. 
Note that $\qfpt(A;f)>0$ because $A$ is quasi-$F$-regular and \cite{Yoshikawa25-fedder}*{Proposition~6.4}.  
Take $a \in \Q_{\ge 0}$ with $a < \qfpt(A;f)$ and $p^e a \in \Z$ for some $e \ge 1$.
Since $(A, aD)$ is quasi-$F^{\infty}$-split,  
\eqref{eq:nu} yields $\nu^q_e(f) \ge p^e a$.
Letting \(a \to \qfpt(A;f)\) from below gives
\[
\lim_{e \to \infty} \frac{\nu^q_e(f)}{p^e} \ge \qfpt(A;f).
\]

For the reverse inequality, fix $e \ge 1$.
By \eqref{eq:nu}, the pair $(A, \frac{\nu^q_e(f)}{p^e}D)$ is quasi-$F^e$-split, and therefore quasi-$F^{\infty}$-split by \cite{Yoshikawa25-fedder}*{Proposition 6.4} again. 
Thus
\[
\frac{\nu^q_e(f)}{p^e} \le \qfpt(A;f).
\]
Taking limits gives the desired inequality.
\end{proof}

Quasi-$F$-pure thresholds coincide with log canonical thresholds in dimension two. 

\begin{theorem}\label{qfpt-lct}
Let $(A,\m)$ be a $2$-dimensional $F$-finite normal local ring of characteristic $p>0$ and $D$ be an effective $\Q$-Weil divisor on $\Spec A$. 
If $\Spec A$ has klt singularities, then
\[
\qfpt(A;D)=\lct(A;D).
\]
\end{theorem}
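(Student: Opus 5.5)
We prove the two inequalities separately. Since $\Spec A$ is klt, Theorem~\ref{klt-to-qFreg}(1) shows that $A$ is quasi-$F$-regular, hence quasi-$F$-split. Being two-dimensional, it is then quasi-$F^{\infty}$-split by Remark~\ref{remark:quasi-F-singularities}(1), so $\qfpt(A;D)$ is defined. We interpret $\qfpt(A;D)$ as the supremum of $c$ such that $(A,cD)$ is purely quasi-$F^{\infty}$-split. A two-dimensional klt singularity is $\Q$-factorial, so $K_A+cD$ is $\Q$-Cartier for every $c$. The inequality $\qfpt(A;D)\le \lct(A;D)$ is then immediate from Theorem~\ref{qFreg-to-klt}(2): if $(A,cD)$ is purely quasi-$F^{\infty}$-split, then $(\Spec A,cD)$ is lc, so $c\le \lct(A;D)$.

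For the reverse inequality, we fix a rational number $c<\lct(A;D)$ and aim to show that $(A,cD)$ is quasi-$F^{\infty}$-split. Since $\Spec A$ is klt and $(\Spec A, \lct(A;D)\,D)$ is lc, convexity of discrepancies shows that $(\Spec A,cD)$ is klt for every $c<\lct(A;D)$. Indeed, for $c=t\cdot\lct$ with $0\le t<1$, each discrepancy $a_E(X,cD)$ is the convex combination $(1-t)a_E(X,0)+t\,a_E(X,\lct\,D)$, which is $>-1$. Applying Theorem~\ref{klt-to-qFreg}(1) to the pair $(A,cD)$ shows that $(A,cD)$ is quasi-$F$-regular. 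We then need that quasi-$F$-regularity of a pair implies quasi-$F^{\infty}$-splitting. By Remark~\ref{remark:quasi-F-singularities}(2), for the trivial Cartier divisor one gets quasi-$F^{e}$-splitting of $(A,cD)$ for some $e$, but we need it for every $e$. The plan is to argue directly from the definitions. Taking $c'=1\in A^{\circ}$ in Definition~\ref{def:quasi-F-invariants}(iv) gives injectivity of $\Phi^{e}_{A,K_A+cD,n}$ for some $e,n$. Then we use the factorization of $\Phi^{e'}$ through $\Phi^{e}$ for $e'\le e$, the ``quasi-$F^{e+1}$ implies quasi-$F^e$'' direction of Remark~\ref{remark:quasi-F-singularities}(1), to descend to all smaller $e'$.

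To reach larger $e$, we perturb. We choose $c<c_1<\lct(A;D)$ with $c_1-c$ small, so that $(A,c_1D)$ is still klt and hence quasi-$F$-regular. Remark~\ref{remark:quasi-F-singularities}(2), applied to $(A,cD)$ with the Cartier divisor $D'=p^{e}(c_1-c)\cdot(\text{a Cartier multiple of }D)$, supplies quasi-$F^{e}$-splittings of slightly larger boundaries. Alternatively, and more cleanly, we can invoke the analogue of \cite{Yoshikawa25-fedder}*{Proposition~6.4} used in the proof of Theorem~\ref{comp-qfps}: quasi-$F$-regular pairs are quasi-$F^{\infty}$-split, since the proof there shows $\qfpt>0$ precisely from quasi-$F$-regularity. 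Hence $(A,cD)$ is quasi-$F^{\infty}$-split, and $c\le \qfpt(A;D)$. Letting $c\to\lct(A;D)$ yields $\qfpt(A;D)\ge\lct(A;D)$.

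The main obstacle is this last step: passing from quasi-$F$-regularity of $(A,cD)$ to quasi-$F^e$-splitting for \emph{all} $e$. Note that $D$ need not be Cartier here, so Proposition~6.4 of \cite{Yoshikawa25-fedder} may need to be applied in the Weil-divisor setting, or replaced by the two-dimensional argument of \cite{KTTWYY3}*{Theorem~C}. That argument runs on a log resolution via the vanishing conditions (i)–(iii) as in the proof of Theorem~\ref{rational-to-qFrat}, and works uniformly in $e$ once $n$ is large.
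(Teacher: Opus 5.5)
Your outline matches the paper's argument. The paper's proof consists only of the observation $\lct(A;D)=\sup\{t\in\Q_{\ge 0}\mid (A,tD)\ \text{is klt}\}$ followed by an appeal to Theorem~\ref{klt-to-qFreg}. Your added bookkeeping is correct: two-dimensional klt singularities are $\Q$-factorial, $(A,tD)$ is klt for $t<\lct(A;D)$ because discrepancies are affine in $t$, and $\qfpt(A;D)\le\lct(A;D)$ follows from Theorem~\ref{qFreg-to-klt}(2) with no hypothesis on the residue field.

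The gap is in the lower bound. It needs the quasi-$F$-regular pair $(A,\Delta)$, with $\Delta:=tD$, to be quasi-$F^e$-split for \emph{every} $e$. The paper treats this as immediate, but you leave it open, and none of your three routes closes it. The perturbation is circular: Remark~\ref{remark:quasi-F-singularities}(2) returns an exponent that depends on the Cartier divisor you feed it, so a divisor built from a prescribed $e$ tells you nothing about that $e$. The appeal to Proposition~6.4 of Yoshikawa's Fedder-type paper is unjustified, since the paper uses it only over a regular ring with $D=\operatorname{div}(f)$ Cartier, whereas here $K_A$ and $D$ are merely $\Q$-Cartier. The third route, rerunning the log-resolution argument uniformly in $e$, is only a sketch.

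The missing idea is that testing quasi-$F$-regularity against a high power of a fixed element forces $e$ to be large. Fix $e_0\ge 1$, a nonzero $g\in\m$, and an integer $\ell\ge p^{e_0}$. Definition~\ref{def:quasi-F-invariants}(iv) gives $e,n$ such that $\Phi^e_{A,K_A+\Delta,n}(g^\ell)$ is injective on $H^2_\m(\omega_A)$.

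\emph{Step 1: $p^e>\ell$.} Suppose instead $p^e\le\ell$. Multiplication by $[g]$ on $W_nA(K_A+\Delta)$ is injective, and $R^{n-1}$ carries it to multiplication by $g$ on $\omega_A$. Compare the connecting maps of the two resulting short exact sequences, using right exactness of $H^1_\m$ on the one-dimensional quotients. This shows that $R^{n-1}$ maps the $[g]$-torsion of $H^2_\m(W_nA(K_A+\Delta))$ onto the $g$-torsion of $H^2_\m(\omega_A)$, which contains the nonzero socle. So pick $y$ with $[g]y=0$ and $R^{n-1}(y)\neq 0$. Then $F^e_*[g^\ell]F^e(y)=F^e_*[g^{\ell-p^e}]F^e([g]y)=0$. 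Hence $\Phi^e_{A,K_A+\Delta,n}(g^\ell)$ kills $R^{n-1}(y)\neq 0$, a contradiction. Thus $p^e>\ell\ge p^{e_0}$, so $e>e_0$.

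\emph{Step 2: descend from $e$ to $e_0$.} Write $F^e=F^{e_0}_*F^{e-e_0}\circ F^{e_0}$. The pushout property then gives a map $Q^{e_0}_{A,K_A+\Delta,n}\to Q^{e}_{A,K_A+\Delta,n}(g^\ell)$ through which $\Phi^e_{A,K_A+\Delta,n}(g^\ell)$ factors. Therefore $\Phi^{e_0}_{A,K_A+\Delta,n}$ is injective, i.e.\ $(A,\Delta)$ is quasi-$F^{e_0}$-split.

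Since $e_0$ was arbitrary, $(A,\Delta)$ is quasi-$F^{\infty}$-split. This gives $t\le\qfpt(A;D)$ for every $t<\lct(A;D)$, and hence $\qfpt(A;D)\ge\lct(A;D)$.
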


\begin{proof}
Since $A$ is klt, we have 
\[
\lct(A;D)=\sup\{c \in \Q_{\ge 0} \mid (A,cD)\ \text{is klt}\,\}.
\]
Therefore, the assertion follows from Theorem~\ref{klt-to-qFreg}. 
\end{proof}

In Example~\ref{fpt example}, we saw that the $F$-pure threshold of $x^3+y^2$ depends strongly on the characteristic of the base field. 
For this particular polynomial, however, the quasi-$F$-pure threshold is independent of the characteristic. 

\begin{example}[cf.~\cite{Yoshikawa25-fedder}*{Example~6.10}]\label{cusp}
We use Setting~\ref{notation:fedder-qFreg}.
Let $N = 2$ and $f=x^3 + y^2$.  
Then
by Theorem~\ref{qfpt-lct}, we have
\[
\qfpt(A;f) = \lct(A;f) = \frac{5}{6}. 
\]

While Example~\ref{cusp} is an immediate consequence of Theorem~\ref{qfpt-lct}, we explain how to compute $\qfpt(A;f)$ using Theorem~\ref{comp-qfps}. 
Set $\wt{f} := x^3 + y^2 \in \wt{A}$, and assume for simplicity that $p \ne 2, 3$.   
We first show that $\qfpt(A;f) \le 5/6$.
Fix $e \ge 1$ and set $s=\nu^q_e(f)$. 
By Theorem~\ref{comp-qfps}, there exists an integer $n \ge 1$ such that
\[
\wt{f}^{sp^{n-1}} \notin (\m^{[p^{e+n-1}]},\, p^n).
\]
Thus there exist nonnegative integers $\alpha,\beta$ with $\alpha+\beta = sp^{n-1}$ such that
\[
(x^3)^\alpha (y^2)^\beta \notin \m^{[p^{e+n-1}]}=(x^{p^{e+n-1}}, y^{p^{e+n-1}}).
\]
It follows that $3\alpha < p^{e+n-1}$ and $2\beta < p^{e+n-1}$, and therefore $sp^{n-1} < ({5}/{6}) p^{e+n-1}$, i.e.\ $s/p^e<5/6$. 
Letting $e \to \infty$ yields 
\[
\qfpt(A;f)=\lim_{e \to \infty} \frac{\nu^q_e(f)}{p^e} \le \frac{5}{6}.
\] 

Next, we prove that $\qfpt(A;f) \ge 5/6$.
If $p \equiv 1 \pmod 6$, then Example~\ref{fpt example} gives  
\[\qfpt(A;f) \ge \fpt(A;f)= 5/6.\]
Thus we may assume that $p \equiv 5 \pmod 6$.
Fix an even integer $e \ge 2$ and set $t_e:=(5/6)(p^e - 1)$. 
We show that $t_e \le \nu^q_e(f)$.
Write 
\[
(\alpha_0,\beta_0) := \left(\frac{p^e - 1}{3}, \frac{p^e - 1}{2}\right),
\]
so that $t_e = \alpha_0 + \beta_0$.
Let $c$ be the coefficient of $(x^3)^{\alpha_0}(y^2)^{\beta_0}$ in the expansion of $\wt{f}^{t_e}$, and set $n$ to be the smallest positive integer such that $c \notin (p^n)$.

\begin{claim}\label{cusp:otherwise}
The integer $n$ defined above is equal to $e/2+1$. 
Furthermore, for $s:=t_e$, the element 
\[g:=\wt{f}^{s p^{n-1}}(xy)^{{p^{n-1}-1}}\] 
satisfies the conditions \textup{(1)} and \textup{(2)} of  Theorem~\ref{comp-qfps}. 
\end{claim}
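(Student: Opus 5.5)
The plan is to reduce everything to $p$-adic valuations of binomial coefficients and to count them with Kummer's theorem: $v_p\binom{a+b}{a}$ equals the number of carries when $a$ and $b$ are added in base $p$.

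\emph{Step 1: computing $n$.} Since $c=\binom{t_e}{\alpha_0}$, I will write $\alpha_0=(p^e-1)/3$ and $\beta_0=(p^e-1)/2$ in base $p$, using $p\equiv 5\pmod 6$ and $e$ even.
\begin{itemize}
\item The digits of $\beta_0$ are all equal to $(p-1)/2$.
\item The digits of $\alpha_0$ alternate. At the even positions $0,2,\dots$ the digit is $(2p-1)/3$, and at the odd positions it is $(p-2)/3$; this is visible from $(p^2-1)/3=\frac{p-2}{3}p+\frac{2p-1}{3}$.
\end{itemize}
At an even position the digit sum is $(2p-1)/3+(p-1)/2=(7p-5)/6\ge p$ because $p\ge 5$, so a carry occurs. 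At an odd position, even with an incoming carry, the sum is $(p-2)/3+(p-1)/2+1=(5p-1)/6<p$, so no carry occurs. Hence there are exactly $e/2$ carries, so $v_p(c)=e/2$ and $n=e/2+1$.

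\emph{Step 2: which terms of $g$ the traces see.} Set $s=t_e$ and expand
\[
g=\sum_{\alpha+\beta=sp^{n-1}}\binom{sp^{n-1}}{\alpha}\,x^{3\alpha+p^{n-1}-1}y^{2\beta+p^{n-1}-1}.
\]
Clearly $g\in(\wt{f}^{sp^{n-1}},p^n)\wt{A}$. The map $\mathrm{Tr}^m$ kills a monomial $x^ay^b$ unless $a\equiv b\equiv -1 \pmod{p^m}$. On a surviving monomial it multiplies the coefficient by a unit under $F^{-m}$ on $W(k)$, which preserves $p$-adic valuation.

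Take $m=e+r-1$ with $1\le r\le n$. For a term to survive we need $3\alpha\equiv -p^{n-1}\pmod{p^m}$. Since $n-1=e/2<m$, this forces $v_p(\alpha)=n-1$. So we may write $\alpha=p^{n-1}\alpha'$ and $\beta=p^{n-1}\beta'$ with $\alpha'+\beta'=s$ and $3\alpha'\equiv 2\beta'\equiv -1\pmod{p^{k}}$, where $k=m-n+1=r+e/2-1$. Since $3\alpha_0\equiv 2\beta_0\equiv -1\pmod{p^e}$, this means $\alpha'\equiv\alpha_0$ and $\beta'\equiv\beta_0\pmod{p^k}$. By Kummer, $v_p\binom{sp^{n-1}}{\alpha'p^{n-1}}=v_p\binom{s}{\alpha'}$, because multiplying by $p^{n-1}$ only appends zero digits.

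\emph{Step 3: conditions (i) and (ii).}
\begin{itemize}
\item For (i), take $r\le n-1$. The lowest $k$ base-$p$ digits of $\alpha',\beta'$ agree with those of $\alpha_0,\beta_0$, so the Step 1 count applies to them: these digits produce at least $\lceil k/2\rceil$ carries. Since $r\le e/2$, we have $k\ge 2r-1$, hence $\lceil k/2\rceil\ge r$. So every surviving coefficient lies in $(p^r)$, which gives (i).
\item For (ii), take $r=n$ and reduce modulo $\m$. The only contribution comes from the monomial $(xy)^{p^{e+n-1}-1}$. The degree equation $3\alpha'p^{n-1}+p^{n-1}-1=p^{e+n-1}-1$ forces $\alpha'=\alpha_0$, and then $\beta'=\beta_0$. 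The coefficient is therefore a unit times $c$, of valuation $e/2=n-1<n$, so it does not lie in $(\m,p^n)$.
\end{itemize}

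I expect the main obstacle to be the bookkeeping in Step 2 for condition (i). There one must make sure that every surviving monomial, including those of higher degree that are not killed by reducing modulo $\m$, is controlled by the congruence $\alpha'\equiv\alpha_0\pmod{p^k}$. One must also check that Kummer's carry count over only the low $k$ digits still gives the bound $\ge r$.
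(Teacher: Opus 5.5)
Your proposal is correct and follows the paper's proof essentially step for step. It uses the same base-$p$ digit patterns and Kummer carry count to get $n=e/2+1$, the same reduction to $\alpha'$, $\beta'$ with $3\alpha'\equiv 2\beta'\equiv -1 \pmod{p^{e/2+r-1}}$ for (i), and the same unique term $(xy)^{p^{e+n-1}-1}$ of coefficient valuation $n-1$ for (ii); the only differences are cosmetic (a uniform bound $\lceil k/2\rceil\ge r$ instead of a parity split, and the exact degree equation instead of the congruence plus $\alpha',\beta'<p^e$).
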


\begin{proof}[Proof of Claim~\ref{cusp:otherwise}]
We repeatedly use Kummer’s theorem on binomial coefficients, which says that the $p$-adic order of $\binom{M+N}{N}$ equals the number of carries in the base-$p$ addition of $M$ and $N$.
We first show that $n = e/2 + 1$. 
The base-$p$ expansions of $(1/2)(p^e-1)$ and $(1/3)(p^e-1)$ are as follows: 
\begin{align*}
\frac{1}{2}(p^e - 1)
  &= \frac{p - 1}{2} + \frac{p - 1}{2}p + \cdots+\frac{p-1}{2}p^{e-2}+\frac{p-1}{2}p^{e-1},\\
\frac{1}{3}(p^e - 1)
  &= \frac{2p - 1}{3} + \frac{p - 2}{3}p  + \cdots+\frac{2p - 1}{3}p^{e-2}+\frac{p-2}{3}p^{e-1},
\end{align*}
Noting that
\begin{align*}
& \frac{p-1}{2}+\frac{2p-1}{3}=p+\frac{p-5}{6} \ge p,\\    
& \frac{p-1}{2}+\frac{p-2}{3}+1=\frac{5p-1}{6}<p, 
\end{align*}
we see from Kummer's theorem that the $p$-adic order of 
\[
c=\binom{t_e}{\beta_0}=\binom{\frac{5}{6}(p^e - 1)}{\frac{1}{3}(p^e - 1)}
\]
is $e/2$, so $n=e/2+1$. 

Fix $1 \le r \le n$.
Consider the terms in the expansion of $g$ whose exponents of $x$ and $y$ are both congruent to $-1$ modulo $p^{e+r-1}$.
Any such term has the form
\[
(x^3)^\alpha (y^2)^\beta {(xy)^{p^{n-1}-1}},
\]
where $\alpha,\beta \ge 0$ satisfy $\alpha + \beta = t_e p^{n-1}$ and
\begin{align*}
3\alpha + p^{n-1} - 1 &\equiv -1 \pmod{p^{e+r-1}}, \\
2\beta + p^{n-1} - 1 &\equiv -1 \pmod{p^{e+r-1}}.
\end{align*}
Since $p^{n-1}\mid 3\alpha$, $p^{n-1}\mid 2\beta$ and $p\ne 2,3$, it follows that $p^{n-1}\mid \alpha$ and $p^{n-1}\mid \beta$. 
Set $\alpha' := \alpha/p^{n-1}$ and $\beta' := \beta/p^{n-1}$.
Then $\alpha' + \beta' = t_e$ and
\[
3\alpha' \equiv 2\beta' \equiv -1 \pmod{p^{\,e/2 + r - 1}}.
\]

For $r \le n-1$, we examine the first $e/2+r-1$-digits of the base-$p$ expansions of $\alpha'$ and $\beta'$. 
Assume first that $e/2+r-1$ is odd. Then 
\begin{align*}
\alpha'
  &= \frac{2p - 1}{3} + \frac{p - 2}{3}p  + \cdots+\frac{2p - 1}{3}p^{e/2+r-2}+\cdots,\\
\beta'
  &= \frac{p - 1}{2} + \frac{p - 1}{2}p + \cdots + \frac{p - 1}{2}p^{e/2+r-2} + \cdots. 
\end{align*}
It follows from Kummer’s theorem that 
\[
v_p \left(\binom{t_e p^{n-1}}{\beta}\right)=v_p\left(\binom{t_e}{\beta'}\right) \ge \frac{1}{2}\left(\frac{e}{2}+r\right) \ge r, 
\]
where $v_p$ denotes the $p$-adic valuation. 
If $e/2+r-1$ is even, a similar argument still shows that the $p$-adic order of $\binom{t_e p^{n-1}}{\beta}$ is at least $r$. 
Thus, $\mathrm{Tr}^{e+r-1}(\phi^{e+r-1}_* g) \in (p^r)$. 

When $r=n$, we have $e/2+r-1=e$, and hence 
\[
3\alpha' \equiv 2\beta' \equiv -1 \pmod{p^{e}}.
\]
Since $0\le \alpha',\beta'\le t_e < p^e$, these congruences force
\[
\alpha'=\alpha_0=\frac{p^e-1}{3},
\quad
\beta'=\beta_0=\frac{p^e-1}{2}.
\]
Thus, among the terms in the expansion of $g$, the only monomial whose exponents of $x$ and $y$ are both congruent
to $-1$ modulo $p^{e+n-1}$ is the one corresponding to
$\alpha=\alpha_0p^{n-1}$ and $\beta=\beta_0p^{n-1}$.
Applying Kummer's theorem gives  
\[
v_p\left(\binom{t_e p^{n-1}}{\beta_0 p^{n-1}}\right)=v_p\left(\binom{t_e}{\beta_0}\right)=n-1,
\]
and therefore $\mathrm{Tr}^{e+n-1}(\phi^{e+n-1}_* g) \notin (\m, p^n)$. 
\end{proof}

It follows from Theorem~\ref{comp-qfps} and Claim~\ref{cusp:otherwise} that $t_e \le \nu^q_e(f)$ for all even integers $e$.
Consequently,
\[
\frac{5}{6}
  = \lim_{e \to \infty} \frac{t_e}{p^e}
  \le \lim_{e \to \infty} \frac{\nu^q_e(f)}{p^e}
  = \qfpt(A;f).
\]
\end{example}

\section{Open Problems}

We conclude this paper by listing a number of open problems on quasi-$F$-singularities. 
\begin{enumerate}
\item For every integer $e \ge 1$, are normal quasi-$F^e$-split singularities quasi-$F^e$-injective?
This is known when $e=1$ or when the ring $A$ is Cohen--Macaulay by Proposition~\ref{cor:qFinj-to-qF^einj}. 

\item Are quasi-$F$-regular singularities Cohen--Macaulay? 
This is known if the ring is $\Q$-Gorenstein of dimension three or smaller (see \cite{KTTWYY3}*{Theorem D}).

\item Let $(A,\m)$ be an $F$-finite regular local ring of characteristic $p>0$ and $f \in \m$ be a nonzero element.
Does $\qfpt(A;f)=1$ imply that $A/(f)$ is quasi-$F$-split? 
The converse holds by \cite{Yoshikawa25-fedder}*{Example~6.5}.

\item Let $A:=k[[x,y,z,w]]$ be the formal power series ring over a perfect field $k$ of characteristic $p>0$. 
For $f=x^4+y^4+z^4+w^4$, what is $\qfpt(A;f)$? 
It follows from Proposition~\ref{Fedder-fpt} that if $p \equiv 1 \pmod{4}$, then 
\[
    \qfpt(A;f) = \fpt(A;f) = 1.
\]
\end{enumerate}

%    Bibliographies can be prepared with BibTeX using amsplain,
%    amsalpha, or (for "historical" overviews) natbib style.
\bibliographystyle{amsplain}
%    Insert the bibliography data here.
% (compat) commands used in amsrefs-style entries
\providecommand{\au}[1]{#1}
\providecommand{\at}[1]{#1}
\providecommand{\jt}[1]{#1}
\providecommand{\bvol}[1]{#1}
\providecommand{\yr}[1]{#1}
\providecommand{\st}[1]{#1}
\providecommand{\arXiv}{arXiv}
\providecommand{\arxiv}[1]{#1}

\end{document}